\documentclass[12pt,reqno]{amsart}
\usepackage{etex}
\usepackage{dsfont,amsmath,amsfonts,amscd,amssymb,graphicx,mathrsfs,eufrak,upgreek}
\usepackage[dvips,all,arc,curve,color,frame]{xy}
\usepackage[dvipsnames]{xcolor}
\usepackage{setspace}
\usepackage{array,blkarray,multirow,booktabs,longtable}

\usepackage{mymacroschober}

\setcounter{tocdepth}{1}
\let\oldtocsubsection=\tocsubsection
\renewcommand{\tocsubsection}[3]{\hspace{0.7em}\oldtocsubsection{#1}{\footnotesize{#2}}{\footnotesize{#3}} }

\usepackage[final]{optional}

\usepackage[colorlinks,hyperfootnotes=false]{hyperref} 
\newcommand{\hrefsf}[2]{\href{#1}{\textsf{#2}}}

\usepackage{versions}
\includeversion{arXiv}

\newcommand{\equalsSep}{0.8pt}

\setlength{\marginparwidth}{1in}
\newcommand{\marginparstretch}{0.6}
\let\oldmarginpar\marginpar
\renewcommand\marginpar[1]{\-\oldmarginpar[\framebox{\setstretch{\marginparstretch}\begin{minipage}{\marginparwidth}{\raggedleft\tiny #1}\end{minipage}}]{\framebox{\setstretch{\marginparstretch}\begin{minipage}{\marginparwidth}{\raggedright\tiny #1}\end{minipage}}}}

\usepackage{tikz,tikz-cd,mathrsfs}
\usetikzlibrary{arrows,positioning,calc}

\def\arrowGap{5pt} 

\usepackage{pgfplots}
\pgfplotsset{compat=1.8}
\usetikzlibrary{decorations.markings}

\addtolength{\hoffset}{-0.5cm} \addtolength{\textwidth}{1cm}
\addtolength{\voffset}{-1.5cm} \addtolength{\textheight}{2cm}

\newcounter{envcount} \numberwithin{envcount}{section}
\newcounter{keyenvcount}

\newtheorem{theorem}[envcount]{Theorem}
\newtheorem{corollary}[envcount]{Corollary}

\newtheorem{keytheorem}[keyenvcount]{Theorem}
\newtheorem{keycorollary}[keyenvcount]{Corollary}
\newtheorem{keyproposition}[keyenvcount]{Proposition}

\newtheorem*{questions}{Questions}

\theoremstyle{definition} 
\newtheorem{proposition}[envcount]{Proposition}
\newtheorem*{lemma}{Lemma}
\newtheorem*{definition}{Definition}

\newtheorem*{claim}{Claim}

\newtheorem{conjecture}{Conjecture}

\newtheorem{example}[envcount]{Example}
\newtheorem{keyexample}{Example}
\newtheorem*{notation}{Notation}

\theoremstyle{remark}
\newtheorem*{remark}{Remark}
\newtheorem*{acknowledgements}{Acknowledgements}
\newtheorem*{conventions}{Conventions}
\newtheorem*{structure}{Structure of paper}

\def\labelAdjust{0.3}
\def\fillColor{black!20}

\def\flopSide{-}
\def\pr{p}
\def\excpr{q}
\def\mpr{\mmp}
\def\torusProj{\pi}
\def\catquot{\pi}
\def\idem{\mathrm{idem}}
\def\op{\mathrm{op}}

\newcommand\cotangent[1]{\Omega_{#1}}
\newcommand\contactInfinity[1]{\Omega^\infty_{#1}}
\newcommand\cotangentFibre[2]{\Omega_{#1,#2}}

\newcommand\Ladj{\mathsf{LA}}
\newcommand\Radj{\mathsf{RA}}

\def\Hom{\mathop{\rm Hom}\nolimits}

\def\mod{\operatorname{\bf{Mod}}}

\def\Idmatrix{\mathds{1}}

\def\pt{{\rm{pt}}}

\newcommand\compose{\circ}
\newcommand\placeholder{-}

\newcommand\KS{Kapranov--Schechtman}
\newcommand\KandS{Kapranov and Schechtman}
\newcommand\BKS{Bondal--Kapranov--Schechtman}
\newcommand\BKandS{Bondal, Kapranov, and Schechtman}

\newcommand\GPS{Ganatra--Pardon--Shende}
\newcommand\GPandS{Ganatra, Pardon, and Shende}

\newcommand\cO{\mathcal{O}}

\newcommand\cE{\mathcal{E}}
\newcommand\cF{\mathcal{F}}

\newcommand\Ls{{\Lambda_\Sigma}}
\newcommand\Lsinfty{{\Lambda_\Sigma^\infty}}

\newcommand\mmp{{\reflectbox{\it{p}}}}

\renewcommand{\SS}{{\mathop{\mathrm{SS}}}}

\newcommand{\LambdaW}{\bigcup\Lambda_\pm}
\newcommand{\LambdaWsmall}{\bigcup\!\Lambda_\pm}

\newcommand{\window}{\cP_0}
\newcommand{\windowLetter}{\cP}
\newcommand{\microsky}{\cF}

\hyphenation{Groth-en-dieck}
\hyphenation{scho-bers}


\def\hairLength{0.04}
\def\flagShaded{0}
\def\flagMinus{1}
\def\flagPlus{2}
\def\flagUnion{3}
\def\flagShadedOther{4}

\newcommand{\surfpicture}[1]{
\def\flag{#1}
\begin{tikzpicture}[scale=3.5]
\coordinate (A00) at (0,0);
\coordinate (A0H) at (0,0.5);
\coordinate (A1H) at (1,0.5);
\coordinate (A10) at (1,0);
\coordinate (A01) at (0,1);
\coordinate (A11) at (1,1);

\ifx\flag\flagShaded
\draw[fill,color=\fillColor] (A0H) -- (A10) -- (A01) -- cycle;
\fi

\ifx\flag\flagShadedOther
\draw[fill,color=\fillColor] (A1H) -- (A10) -- (A01) -- cycle;
\fi

\draw[semithick] (A00) -- (A10) -- (A11) -- (A01) -- cycle;

\ifx\flag\flagMinus \else
\draw[blue,semithick] (A10) -- (A01);
\def\steps{20} 
\foreach \x in {3,...,\steps} {
   \coordinate (x) at ($(A10)!1/\steps*(\x-1)!(A01)$); 
   \draw[blue,semithick] (x) -- ($(x)-(45:\hairLength)$); 
   }
\ifx\flag\flagUnion \node at ($(A10)!0.65!(A01)+(-3*\hairLength,-\hairLength)$) {\footnotesize $ \microsky_-$}; \fi
\fi

\draw[semithick] (A10) -- (A0H);
\def\steps{16} 
\foreach \x in {3,...,\steps} {
   \coordinate (x) at ($(A10)!1/\steps*(\x-1)!(A0H)$); 
   \draw[semithick] (x) -- ($(x)-(90-45/2:\hairLength)$); 
   }

\draw[semithick] (A1H) -- (A01);
\def\steps{16} 
\foreach \x in {\ifx\flag\flagPlus 3 \else 4 \fi,...,\steps} {
   \coordinate (x) at ($(A1H)!1/\steps*(\x-2)!(A01)$); 
   \draw[semithick] (x) -- ($(x)-(90-45/2:\hairLength)$); 
   }

\def\steps{14} 
\def\omitA{9}
\def\omitB{8}
\foreach \x in {3,...,\steps} {
   \coordinate (x) at ($(A11)!1/\steps*(\x-1)!(A10)$); 
   \ifx\x\omitA \else \ifx\x\omitB \else
      \draw[semithick] (x) -- ($(x)-(0:\hairLength)$) \fi \fi; 
   \ifx\flag\flagPlus 
      \draw[semithick] (x) -- ($(x)-(0:\hairLength)$) \fi; 
   }

\def\spray{5}
\def\sprayRange{5}
\foreach \x in {1,...,\sprayRange} {
   \coordinate (x) at (A11); 
   \draw[semithick] (x) -- ($(x)-(90/\spray*\x:3*\hairLength)$); 
   }

\ifx\flag\flagPlus \else
\def\sprayRange{4}
\foreach \x in {0,...,\sprayRange} {
   \coordinate (x) at (A1H); 
   \draw[semithick,red] (x) -- ($(x)-(90/\spray*\x:3*\hairLength)$); 
   }
\ifx\flag\flagUnion \node at ($(A1H)+(-4*\hairLength,-3*\hairLength)$) {\footnotesize $\microsky_+$}; \fi
\fi
   
\end{tikzpicture}}


\begin{document}

\title[Mirror symmetry for perverse schobers]{Mirror symmetry for perverse schobers from birational geometry} 
\author{W.\ Donovan}
\address{W.\ Donovan, Yau Mathematical Sciences Center, Tsinghua University, Haidian District, Beijing 100084, China.}
\email{donovan@mail.tsinghua.edu.cn}
\author{T.\ Kuwagaki}
\address{T.\ Kuwagaki, Kavli IPMU (WPI), UTIAS, University of Tokyo, Kashiwa, Chiba 277-8583, Japan.}
\email{tatsuki.kuwagaki@ipmu.jp}

\begin{abstract}
Perverse schobers are categorical analogs of perverse sheaves. Examples arise from varieties admitting flops, determined by diagrams of derived categories of coherent sheaves associated to the flop: in this paper we construct mirror partners to such schobers, determined by diagrams of Fukaya categories with stops, for examples in dimensions~$2$ and $3$. Interpreting these schobers as supported on loci in mirror moduli spaces, we prove homological mirror symmetry equivalences between them. Our construction uses the coherent-constructible correspondence and a recent result of \GPS{}~\cite{GPS} to relate the schobers to certain categories of constructible sheaves. As an application, we obtain new mirror symmetry proofs for singular varieties associated to our examples, by evaluating the categorified cohomology operators of \BKS{}~\cite{BonKapSch} on our mirror schobers.
\end{abstract}

\thanks{}
\maketitle
\thispagestyle{empty}

\section{Introduction}
Mirror symmetry is a collection of mysterious conjectural relationships between complex and symplectic geometry, inspired by the physics of superstring \mbox{theory}. For certain pairs of a complex geometry~$X$ and a symplectic geometry~$Y$, key predictions are:
\begin{enumerate}
\item that a stringy K\"ahler moduli space $\cM_{\text{K\"ah}}$ for~$X$ is isomorphic to a complex structure moduli space $\cM_{\text{CS}}$ for~$Y$, and 
\item an equivalence between a derived category of coherent sheaves on~$X$ and a Fukaya category for~$Y$, called {\em homological mirror symmetry}.
\end{enumerate}
Unifying these, mirror symmetry predicts:
\begin{enumerate}
\setcounter{enumi}{2}
\item an equivalence between a locally constant family of derived categories on~$\cM_{\text{K\"ah}}$, and a corresponding family of Fukaya categories on~$\cM_{\text{CS}}$.
\end{enumerate}

We interpret a `locally constant family of categories' here as an appropriate categorical analog of a locally constant sheaf. Such sheaves arise as solution sheaves for ordinary differential equations: when these equations have singularities, it is natural to study them using certain generalized objects called {\em perverse sheaves}. 

\KandS{} have now suggested categorical analogs of perverse sheaves~\cite{KS2}, named {\em perverse schobers}, or simply {\em schobers}. As perverse sheaves may be thought of as singular versions of locally constant sheaves, schobers give a notion of a locally constant families of categories with singular behaviour.

In the mirror symmetry situation, the locally constant family of categories on $\cM_{\text{K\"ah}}$ is expected to have singular behaviour at certain boundary points. We focus here on boundary points known as {\em conifold points}. Recent research suggests that one can extend the family as a schober over such points by using categories from birational geometry. By mirror symmetry, this schober should then have a mirror partner. Namely, we have: 
\begin{questions} For the locally constant family of Fukaya categories on $\cM_{\text{\em CS}}$,
\renewcommand{\theenumi}{\roman{enumi}}
\begin{enumerate}
\item\label{question a} does this family extend to a perverse schober?

\item\label{question b} does this schober satisfy a mirror symmetry equivalence?
\end{enumerate}
\renewcommand{\theenumi}{\arabic{enumi}}
\end{questions}
In this paper, we give affirmative answers to these questions for some examples, showing that appropriate extensions may be elegantly formulated using Fukaya categories with stops. We apply this to give a new proof of homological mirror symmetry for singularities associated to these examples, by evaluating a categorified cohomology operator on the resulting schober of Fukaya categories.

\subsection{Background}\label{background} We give background on perverse schobers and mirror symmetry, before explaining our results in Section~\ref{section results}.

\subsubsection*{Stringy K\"ahler moduli heuristic} In this paper, we take $X$ to be a resolution of a surface or $3$-fold quadric cone. As a heuristic for these examples, we take $\cM_{\text{K\"ah}}$ to be a punctured disk~$\Delta - p$, and $p$ to be a conifold point. This should be thought of as a local slice of the full stringy K\"ahler moduli: for further physical discussion in the $3$-fold case, see Aspinwall~\cite[Section~4]{Asp}. We therefore construct schobers on $\Delta$, singular at $p$, to answer Question~(\ref{question a}): this will mean constructing a {\em spherical pair}, as follows.

\begin{figure}[h]
\begin{center}
\begin{tikzpicture}
 \node at (0,0) {
\begin{tikzpicture}[node distance=1cm, auto, line width=0.5pt]

\draw[line width=0.5pt] (0,0) circle (1.5);

\draw[fill=black] (0,0) circle (.2ex);
\draw[fill=black] (-1.5,0) circle (.2ex);
\draw[fill=black] (+1.5,0) circle (.2ex);

\draw (0,0) to [bend right=15] (1.5,0);
\draw (0,0) to [bend right=15] (-1.5,0);

 \node (labelSkel) at (0.8,-0.4) {$K$};
 \node (labelSkel) at (1.2,1.5) {$\Delta$};

 \node (pos) at (-1.8,0.2)  {\scriptsize $P_-$};
 \node (pos) at (0.2,0.2)  {\scriptsize $P_0$};
 \node (neg) at (+1.8,0.2) {\scriptsize $P_+$};

\end{tikzpicture}
 };
 \node at (6,0) {
\begin{tikzpicture}[scale=1.5,node distance=1cm, auto, line width=0.5pt]

 \node (mid) at (0,0) {$\cP_0^{\vphantom{+}}$};
 \node (pos) at (-1,0)  {$\cP_-^{\vphantom{+}}$};
 \node (neg) at (+1,0) {$\cP_+^{\vphantom{+}}$};

\draw[right hook->] (pos) to (mid);
\draw[left hook->] (neg) to (mid);
\end{tikzpicture}
 };
 \end{tikzpicture}
\end{center}
\caption{Data determining perverse sheaf, and spherical pair.}\label{figure sph pair}
\end{figure}
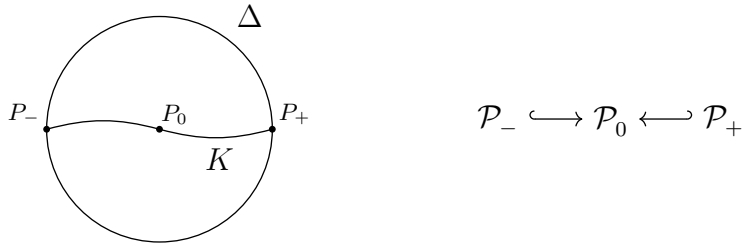

\subsubsection*{Spherical pairs} \KandS{}~give different categorifications of a perverse sheaf~$P$ on $\Delta$, singular at $p$, for different \mbox{{\em skeletons} $K \subset \Delta$~\cite{KS2}}. Take $K$ a path which passes through $0$ as in Figure~\ref{figure sph pair}. Then the sheaf of local cohomology of~$P$ with support in~$K$ is concentrated in some fixed degree (this important property is known as {\em purity}), and is constructible. Furthermore, because~$P$ is singular only at~$p$, this sheaf has only $3$~distinct stalks ($P_\pm$~and~$P_0$). These stalks, along with natural maps between them, turn out to determine~$P$.

This motivates the definition of a spherical pair, illustrated in Figure~\ref{figure sph pair}, as the data of triangulated categories $\cP_\pm$ and $\cP_0$, along with embeddings satisfying certain conditions, and further conditions on orthogonals to these embeddings: for details, see Section~\ref{section schober defs}.

Examples of spherical pairs arise from birational geometry. Letting $X_\pm$ be the two sides of a $3$-fold flop, or more generally certain flops of families of curves, Bodzenta and Bondal~\cite{BB} construct a spherical pair given by the following data. 
\begin{equation*}
 \begin{tikzpicture}[scale=1.7]
\node (Bminus) at (-1.3,1) {$ D(X_-) $};
\node (Bzero) at (0,1) {$ \cQ_0 $};
\node (Bplus) at (+1.3,1) {$ D(X_+) $};
\draw[right hook->] (Bminus) to node[above] {} (Bzero); 
\draw[left hook->] (Bplus) to node[above] {} (Bzero); 
\end{tikzpicture}
\end{equation*}
Here $\cQ_0$ is an appropriate quotient of the derived category~$D(X_B)$ of the fibre product of $X_\pm$ over their common contraction. In this paper we give an alternative construction using a subcategory of $D(X_B)$, in Theorem~\ref{theorem schober equivalence} below.

\subsubsection*{Flobers} Bondal, Kapranov, and Schechtman give related constructions for certain webs of flops, under the name of {\em flobers}, which may be viewed as categorified perverse sheaves on $\mathbb{C}^n$, singular along a real hyperplane arrangement~\cite{BonKapSch}. The prototype of a flober (on $\mathbb{C}$, singular at $0$) is determined by data as follows.
\begin{equation}\tag{$\ast$}\label{equation flober B}
 \begin{tikzpicture}[scale=1.8,baseline={([yshift=-.5ex]current bounding box.center)}]
\node (Bminus) at (-1.3,1) {$ D(X_-) $};
\node (Bzero) at (0,1) {$ D(X_B) $};
\node (Bplus) at (+1.3,1) {$ D(X_+) $};
\draw[right hook->] (Bminus) to node[above] {} (Bzero); 
\draw[left hook->] (Bplus) to node[above] {} (Bzero); 
\end{tikzpicture}
\end{equation}
A flober is defined by strictly weaker conditions than a spherical pair: in this case, these conditions amount to the usual Fourier--Mukai functors between $ D(X_-) $ and $ D(X_+) $ being equivalences.

\subsubsection*{Mirror symmetry equivalences} Our version of homological mirror symmetry combines work of the second author~\cite{K2} and \GPandS~\cite{GPS}. For a large class of toric stacks~$X$, this gives an equivalence between the bounded derived category of coherent sheaves~$D(X)$ and a certain Fukaya category. Namely, we take a torus $T$ obtained from the (dual of the) toric data for $X$, and consider the {\em wrapped Fukaya category} $\cW_{\Lambda^\infty}(\cotangent{T})$ with {\em stop} $\Lambda^\infty$, a locus in contact infinity of the cotangent bundle~$\cotangent{T}$ which Lagrangians in the category must avoid, also determined by the toric data of~$X$. Further details are given in Section~\ref{section method}.

\subsection{Results}\label{section results} Take $X_0$ to be one of the following singularities, along with two (stacky) resolutions $X_\pm$ as described. 

\begin{keyexample}\label{keyexample surface} Take $X_0$ the quotient of $\mathbb{C}^2$ by $\mathbb{Z}/2\mathbb{Z}$ acting by $\pm 1$. Then let $X_-$~be the associated Deligne--Mumford quotient stack, and $X_+$~be the minimal resolution of the singularity~$X_0$.
\end{keyexample}

\begin{keyexample}\label{keyexample threefold} Take $X_0$ the conifold singularity $\{ xy-zw = 0 \},$ and $X_\pm$ two small resolutions related by an Atiyah flop.
\end{keyexample}

\noindent We denote the fibre product $X_B$ of $X_\pm$ over the singularity $X_0$, and associated morphisms, as follows: see Section~\ref{section schober constructions} for an explicit construction of~$X_B$.
\[
\begin{tikzpicture}[xscale=1,scale=1.25]

\node (Bminus) at (0,1) {$ X_B $};
\node (Bplus) at (1,0) {$ X_+ $};

\node (Cminus) at (-1,0) {$ X_- $};
\node (Cplus) at (0,-1) {$ X_0 $};

\draw[->] (Bminus) to node[above right] {\scriptsize $\pr_+$}  (Bplus);
\draw[->] (Cminus) to (Cplus);

\draw[<-] (Cminus) to node[above left] {\scriptsize $\pr_-$}  (Bminus);
\draw[<-] (Cplus) to (Bplus);

\end{tikzpicture}
\]

\subsubsection*{Spherical pairs} Following our heuristic above, we take $\cM_{\text{CS}}$ and $\cM_{\text{K\"ah}}$ to be punctured disks~$\Delta - p$. The following is our main theorem. It constructs perverse schobers on partial compactifications $\overline{\cM}_{\text{CS}}$ and $\overline{\cM}_{\text{K\"ah}}$, both identified with the disk~$\Delta$. We give these schobers in the form of spherical pairs, and show they are equivalent, answering Questions~(\ref{question a}) and~(\ref{question b}) for our examples.

\begin{keytheorem}[Theorems~\ref{theorem mirror Fukaya}, \ref{theorem mirror Fukaya surf}]\label{theorem schober equivalence}
For Examples~\ref{keyexample surface} and~\ref{keyexample threefold}, we have
\renewcommand{\theenumi}{\roman{enumi}}
\begin{enumerate}
\item schobers given by data below, and

\item an equivalence of these schobers via homological mirror symmetry, explained in Section~\ref{section method}.
\end{enumerate}
\renewcommand{\theenumi}{\arabic{enumi}}

\smallskip
\noindent{\em\bf Symplectic side:}
Let $T$ be a torus and $\Lambda_{\pm}^\infty$ be loci in contact infinity of~$\cotangent{T}$, all determined by the toric data of $X_\pm$ in Section~\ref{section schober constructions}. Then take data as follows, with embeddings given in the course of the proof.
\begin{equation}\tag{A}\label{equation sph pair A}
 \begin{tikzpicture}[scale=2.3,baseline={([yshift=-.5ex]current bounding box.center)}]
\node (Bminus) at (-1.5,1) {$ \cW_{\Lambda_-^\infty}(\cotangent{T}) $};
\node (Bzero) at (0,1) {$ \cW_{\Lambda_-^\infty \cup \Lambda_+^\infty}(\cotangent{T}) $};
\node (Bplus) at (+1.5,1) {$ \cW_{\Lambda_+^\infty}(\cotangent{T}) $};
\draw[right hook->] (Bminus) to node[above] {} (Bzero);
\draw[left hook->] (Bplus) to node[above] {} (Bzero); 
\end{tikzpicture}
\end{equation}
\noindent{\em\bf Complex side:}
Let  $\window$ be the subcategory of $ D(X_B) $ generated by the images of the embeddings of $ D(X_\pm) $ in~\eqref{equation flober B}. Then take data as follows.
\begin{equation}\tag{B}\label{equation sph pair B}
 \begin{tikzpicture}[scale=1.7,baseline={([yshift=-.5ex]current bounding box.center)}]
\node (Bminus) at (-1.3,1) {$ D(X_-) $};
\node (Bzero) at (0,1) {$ \window $};
\node (Bplus) at (+1.3,1) {$ D(X_+) $};
\draw[right hook->] (Bminus) to node[above] {} (Bzero); 
\draw[left hook->] (Bplus) to node[above] {} (Bzero); 
\end{tikzpicture}
\end{equation}
\end{keytheorem}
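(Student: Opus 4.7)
The plan is to prove that the complex-side data~(\ref{equation sph pair B}) forms a spherical pair by direct computation in birational geometry, then transport this to the symplectic side using the coherent-constructible correspondence of the second author~\cite{K2} together with \GPS{}~\cite{GPS}, thereby establishing both parts of the theorem simultaneously.

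First I would set up data~(\ref{equation sph pair B}). The flober data~(\ref{equation flober B}) of \BKS{}~\cite{BonKapSch} already supplies fully faithful functors $\pr_\pm^{\ast} : D(X_\pm) \hookrightarrow D(X_B)$; since $\window$ is defined as the subcategory these generate, the embeddings in~(\ref{equation sph pair B}) are tautological. The substantive content is the verification of the Kapranov--Schechtman spherical pair axioms: admissibility of each embedding in~$\window$, existence of adjoints, and compatibility of the two orthogonals $D(X_\pm)^\perp \subset \window$. I would compute these orthogonals explicitly, expecting them to be generated by objects supported on the exceptional locus of~$\pr_\mp$, which in both examples restricts to a single~$\mathbb{P}^1$. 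The axiom checks then reduce to local computations on this exceptional fibre using Koszul-type resolutions.

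Next I would establish the mirror identification with data~(\ref{equation sph pair A}). The torus $T$ has dimension equal to $\dim X_\pm$, and the Legendrian stops $\Lambda_\pm^\infty \subset \contactInfinity{T}$ are the FLTZ-type skeleta dual to the toric fans of $X_\pm$. The coherent-constructible correspondence combined with GPS yields equivalences $\cW_{\Lambda_\pm^\infty}(\cotangent{T}) \simeq D(X_\pm)$. The combined stop $\Lambda_-^\infty \cup \Lambda_+^\infty$ should correspond to the toric data of the fibre product~$X_B$, so applying the correspondence identifies $\cW_{\Lambda_-^\infty \cup \Lambda_+^\infty}(\cotangent{T})$ with a full subcategory of $D(X_B)$. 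A generator comparison, matching the sub-stop embedding with the pullback~$\pr_\pm^{\ast}$, would show this subcategory is precisely~$\window$, and would simultaneously identify the embeddings of~(\ref{equation sph pair A}) with those of~(\ref{equation sph pair B}).

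The main obstacle I expect is the orthogonality step in the spherical pair axioms, since the flober conditions of \BKS{} are strictly weaker and only demand that the induced Fourier--Mukai functor between $D(X_-)$ and $D(X_+)$ be an equivalence. Promoting a flober to a spherical pair requires pinning down the categorification precisely, including compatibility of the ambient orthogonal complements inside~$\window$. In both examples the exceptional fibres are small enough that standard Fourier--Mukai techniques for the Atiyah flop and the Kummer-type resolution, combined with the toric combinatorics of~$X_B$, should make the verification tractable by hand.
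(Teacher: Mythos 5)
The high-level plan you outline --- verify the spherical-pair axioms on the complex side via semiorthogonal decompositions of $\window$, then transport to the symplectic side via the coherent-constructible correspondence and GPS --- matches the paper's strategy, and the B-side decompositions you anticipate are indeed the ones the paper constructs (orthogonals generated by a single exceptional object on the exceptional fibre). For the stacky surface case, though, the paper needed a variation-of-GIT argument (following Coates--Iritani--Jiang--Segal) to produce the decomposition, rather than a direct Koszul computation, so that part is slightly underspecified in your sketch.

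The substantive gap is the claim that the combined stop $\Lambda_-^\infty \cup \Lambda_+^\infty$ ``should correspond to the toric data of the fibre product~$X_B$.'' It does not: the FLTZ skeleton of $X_B$ is $\Lambda_B$, and $\Lambda_- \cup \Lambda_+ \subsetneq \Lambda_B$ strictly, because the fan $\Sigma_B$ contains a ray not present in $\Sigma_-\cup\Sigma_+$. The paper explicitly remarks that $\Lambda_-\cup\Lambda_+$ is not $\Lambda_\Sigma$ for any fan $\Sigma$, so there is no toric variety whose CCC directly produces $\wSh_{\Lambda_-\cup\Lambda_+}(T)$. Moreover, passing between wrapped constructible categories with nested microsupports is not as simple as restricting to a full subcategory: $\wSh_{\Lambda_-\cup\Lambda_+}(T)$ relates to $\wSh_{\Lambda_B}(T)$ only via a left-adjoint ``wrapping'' functor $\iota^w$, not an inclusion. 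What the paper actually does is form the composition $\window \hookrightarrow D(X_B)\simeq\wSh_{\Lambda_B}(T)\xrightarrow{\iota^w}\wSh_{\Lambda_-\cup\Lambda_+}(T)$, prove full faithfulness using FLTZ functoriality and the fact that $\window$ is generated by $\pr_\pm^*D(X_\pm)$, and then prove essential surjectivity by matching the B-side decomposition $\window=\langle D(X_\pm), D(\pt)\rangle$ against an A-side decomposition $\wSh_{\Lambda_-\cup\Lambda_+}(T)=\langle\wSh_{\Lambda_\pm}(T),\microsky_{x,\xi}\rangle$ whose orthogonal is generated by a microlocal skyscraper sheaf. This A-side decomposition requires its own nontrivial argument, including exceptionality of $\microsky_{x,\xi}$ (established by an explicit stratification computation). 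Without it, the ``generator comparison'' you propose has no mechanism to conclude essential surjectivity, and your route to $\wSh_{\Lambda_-\cup\Lambda_+}(T)$ simply does not go through.
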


\begin{remark}The mirror operation to the $3$-fold flop in Example~\ref{keyexample threefold} is given by Fan, Hong, Lau, and~Yau~\cite{FHLY}. It would be interesting to relate this to the locally constant family of Fukaya categories on the punctured disk~$\Delta - p$ which arises from Theorem~\ref{theorem schober equivalence}.
\end{remark}

\begin{remark}It would be interesting to try to use the schobers appearing in Theorem~\ref{theorem schober equivalence} to categorify perverse sheaves extending the A-model and B-model local systems of cohomology on $\cM_{\text{CS}}$ and $\cM_{\text{K\"ah}}$, in our examples and more generally. In particular, this could lead to an {\em a priori} reason why the derived equivalences associated to a flop may be organized into a categorified perverse sheaf, as asked by \BKS{}~\cite[Section~0A]{BonKapSch}.
\end{remark}

\subsubsection*{Symplectic flobers} Some of the techniques used to prove Theorem~\ref{theorem schober equivalence} then yield the following, giving a counterpart to the flober~\eqref{equation flober B} on the symplectic side.

\begin{keytheorem}[Theorem~\ref{theorem flober equiv}]\label{keytheorem flober equiv} For Examples~\ref{keyexample surface} and~\ref{keyexample threefold}, the flober~\eqref{equation flober B} is equivalent to a flober as follows, where $\Lambda_B^\infty$ is a locus in contact infinity of $\cotangent{T}$, determined by the toric data of $X_B$.
\begin{equation*}\label{equation flober A}
 \begin{tikzpicture}[scale=2.2]
\node (Bminus) at (-1.5,1) {$ \cW_{\Lambda_-^\infty}(\cotangent{T}) $};
\node (Bzero) at (0,1) {$ \cW_{\Lambda_B^\infty}(\cotangent{T}) $};
\node (Bplus) at (+1.5,1) {$ \cW_{\Lambda_+^\infty}(\cotangent{T}) $};
\draw[right hook->] (Bminus) to node[above] {} (Bzero); 
\draw[left hook->] (Bplus) to node[above] {} (Bzero); 
\end{tikzpicture}
\end{equation*}
\end{keytheorem}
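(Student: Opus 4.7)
The plan is to apply the same combination of the coherent-constructible correspondence of~\cite{K2} and the descent theorem of \GPS{}~\cite{GPS} that underlies Theorem~\ref{theorem schober equivalence}, but now directly to the \emph{full} category $D(X_B)$ in place of its subcategory $\window$. A flober requires only fully-faithful embeddings, with the induced functor between $D(X_\pm)$ being an equivalence (the Bondal--Orlov flop equivalence in the $3$-fold case, and the McKay correspondence in the surface case). In particular no orthogonality conditions need be verified, so the argument is correspondingly shorter than that of Theorem~\ref{theorem schober equivalence}.

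First, I would extract toric (or stacky-toric) data for $X_B$ from its explicit construction as a fibre product in Section~\ref{section schober constructions}. Since the fan of $X_B$ is a common refinement of the fans of $X_\pm$ over that of $X_0$, the associated FLTZ-type skeleton $\Lambda_B^\infty$ in contact infinity of the cotangent bundle contains each $\Lambda_\pm^\infty$ as a subskeleton, for the same torus $T$ used for the $X_\pm$.

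Next, applying the CCC/GPS machinery to $X_B$ yields an equivalence $D(X_B) \simeq \cW_{\Lambda_B^\infty}(\cotangent{T})$. The containments $\Lambda_\pm^\infty \subset \Lambda_B^\infty$ then produce fully-faithful stop-enlargement functors $\cW_{\Lambda_\pm^\infty}(\cotangent{T}) \hookrightarrow \cW_{\Lambda_B^\infty}(\cotangent{T})$ by~\cite{GPS}. Combined with the mirror equivalences $D(X_\pm) \simeq \cW_{\Lambda_\pm^\infty}(\cotangent{T})$ already provided by Theorem~\ref{theorem schober equivalence}, this assembles the symplectic flober of the statement.

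The hard part is the final compatibility check: the B-side embeddings $D(X_\pm) \hookrightarrow D(X_B)$ appearing in~\eqref{equation flober B}, which are given by the pullback functors $\pr_\pm^*$, must be intertwined with the A-side stop-enlargement functors under the above mirror equivalences. Under the coherent-constructible correspondence, pullback by a toric morphism arising from a refinement of fans corresponds to the inclusion of constructible sheaves with smaller singular support into those with larger singular support; by \GPS{} this inclusion passes under the descent functor to precisely the stop-enlargement functor on the symplectic side. Once this square-by-square compatibility is verified, the equivalence of flobers follows, and one may read off that the induced auto-equivalence between $\cW_{\Lambda_-^\infty}(\cotangent{T})$ and $\cW_{\Lambda_+^\infty}(\cotangent{T})$ is mirror to the Fourier--Mukai flop/McKay equivalence.
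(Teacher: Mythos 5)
Your proposal is correct and follows essentially the same route as the paper: extract the toric data for $X_B$ (a common refinement of the fans of $X_\pm$), apply the coherent--constructible correspondence and the \GPS{} theorem, and use functoriality of these equivalences under refinement of fans (Propositions~\ref{pullback} and~\ref{pullback2}) to intertwine the embeddings. One small point you gloss over is the contravariance bookkeeping: the direct composition $D(X_B)\simeq \wSh_{\Lambda_B}(T)^{\op}\simeq \cW_{\Lambda_B^\infty}(\cotangent T)$ is contravariant, so the paper first works with the \emph{opposite} flober (using $\pr_\pm^!$ rather than $\pr_\pm^*$, which is what the modified functor $K_\Sigma$ is designed to intertwine with $\mmq_\pm^*$), and then uses Grothendieck duality~$\bD$ to pass between the $\pr^!$- and $\pr^*$-flobers on the B-side; you would need to include this step or an equivalent reduction to make the intertwining square you describe literally commute.
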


\subsubsection*{Application} Perverse sheaves admit certain cohomology operators which yield vectors spaces: these are used, for instance, to define intersection cohomology. By analogy, \BKandS{}~\cite{BonKapSch} define certain categorified cohomology operators on perverse schobers which yield categories. In particular, they define the 2nd cohomology with compact support $\mathbb{H}^2_c$ to be a homotopy push-out of the diagram defining the flober.  They prove that~$\mathbb{H}^2_c$ for~\eqref{equation flober B} is~$D(X_0)$, with~$X_0$ the singular base given above. We~prove a symplectic analog of this result, as follow.

\begin{keyproposition}[Proposition~\ref{proposition a-side cohom}]\label{keyproposition a-side cohom}
The 2nd cohomology with compact support $\mathbb{H}^2_c$ for the symplectic flober in Theorem~\ref{keytheorem flober equiv} is given by the category
\[
\cW_{\Lambda_-^\infty \cap \Lambda_+^\infty}(\Omega_T).
\]
\end{keyproposition}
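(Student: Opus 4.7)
The plan is to compute the homotopy pushout defining $\mathbb{H}^2_c$ by passing to the left adjoints of the flober embeddings and invoking the stop-removal theorem of \GPS{}~\cite{GPS}.

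By \BKS{}~\cite{BonKapSch}, $\mathbb{H}^2_c$ of the symplectic flober of Theorem~\ref{keytheorem flober equiv} is the homotopy pushout, in stable $\infty$-categories, of the defining diagram
\[
\cW_{\Lambda_-^\infty}(\cotangent{T}) \hookrightarrow \cW_{\Lambda_B^\infty}(\cotangent{T}) \hookleftarrow \cW_{\Lambda_+^\infty}(\cotangent{T}).
\]
The first step is to identify the left adjoints to these embeddings with the stop-removal localizations $\cW_{\Lambda_B^\infty}(\cotangent{T}) \to \cW_{\Lambda_\pm^\infty}(\cotangent{T})$ coming from~\GPS{}. After this identification, the homotopy pushout of the cospan above may be expressed as the Verdier quotient of $\cW_{\Lambda_B^\infty}(\cotangent{T})$ by the thick subcategory generated by the union of the kernels of these two localizations.

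The second step is to apply~\GPS's explicit description of the kernels: for any inclusion of stops $\mathfrak{f}_0 \subset \mathfrak{f}$, the kernel of the localization $\cW(\cotangent{T},\mathfrak{f}) \to \cW(\cotangent{T},\mathfrak{f}_0)$ is the thick subcategory generated by the linking disks of the stops in $\mathfrak{f} \setminus \mathfrak{f}_0$. Applied to our two localizations, the union of the kernels is the thick subcategory generated by the linking disks at stops in
\[
(\Lambda_B^\infty\setminus\Lambda_-^\infty)\cup(\Lambda_B^\infty\setminus\Lambda_+^\infty) \;=\; \Lambda_B^\infty\setminus(\Lambda_-^\infty\cap\Lambda_+^\infty).
\]
A second application of the stop-removal theorem, now with $\mathfrak{f}=\Lambda_B^\infty$ and $\mathfrak{f}_0=\Lambda_-^\infty\cap\Lambda_+^\infty$, identifies this Verdier quotient with $\cW_{\Lambda_-^\infty\cap\Lambda_+^\infty}(\cotangent{T})$, yielding the desired formula for $\mathbb{H}^2_c$.

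The main obstacle will be the first step: verifying that the left adjoints of the flober embeddings, as constructed in Theorem~\ref{keytheorem flober equiv}, do coincide with~\GPS's stop-removal localizations. This should follow from the functorial construction of the flober embeddings in the toric setting of Section~\ref{section schober constructions}, via naturality of the wrapped Fukaya category with respect to the stops $\Lambda_\pm^\infty \subset \Lambda_B^\infty$, but requires a careful tracking of adjoints at the level of stable $\infty$-categories.
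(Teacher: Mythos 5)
Your plan is correct in substance, but it takes a genuinely different route from the paper. The paper carries out the entire argument on the constructible-sheaf side: it works with $\lSh_\Lambda(T)$ and $\wSh_\Lambda(T)$, proves a general Bousfield-localization lemma (that $\lSh_\Lambda(Z)/\langle R\rangle \simeq \lSh_{\Lambda'}(Z)$ where $R$ is the set of microlocal skyscraper sheaves over $\Lambda\setminus\Lambda'$), uses Thomason's localization theorem to pass from the unbounded categories to their compact objects, and mirrors the \BKS{} B-side logic step-by-step (extending the pushout square, proving the split-generation lemma for the functor $\reflectbox{\it{h}}$ using Nadler's facts about how $\mpr_{-!}$ acts on microlocal skyscrapers). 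Only at the very end does it apply Theorem~\ref{theorem GPS} to convert the resulting sheaf-side pushout to a Fukaya-side statement. You instead work entirely on the Fukaya side, appealing directly to the \GPS{} stop-removal theorem and its linking-disk description of the kernel, and reducing the pushout computation to the de Morgan identity $(\Lambda_B^\infty\setminus\Lambda_-^\infty)\cup(\Lambda_B^\infty\setminus\Lambda_+^\infty)=\Lambda_B^\infty\setminus(\Lambda_-^\infty\cap\Lambda_+^\infty)$; this bypasses the auxiliary split-generation lemma and the explicit Bousfield/Thomason machinery. Your route is slicker, but it leans more heavily on \GPS{}: you need to check that $\Lambda_-^\infty\cap\Lambda_+^\infty$ is itself an admissible stop (subanalytic Legendrian) so that the stop-removal theorem applies to the pair $(\Lambda_B^\infty, \Lambda_-^\infty\cap\Lambda_+^\infty)$, whereas the paper reproves the needed generation statement in sheaf-theoretic terms and so does not need to invoke this case of the stop-removal theorem. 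One small correction: after the contravariant \GPS{} equivalence $\wSh_\Lambda(T)^{\op}\simeq\cW_{\Lambda^\infty}(\cotangent{T})$, the functors $\mpr_{\pm!}$ (left adjoints on the sheaf side) become \emph{right} adjoints $\mpr_{\pm*}$ of the embeddings on the Fukaya side; these are what the paper identifies with the stop-removal localizations, and they are the correct maps entering the pushout in the \BKS{} definition. The variance should be tracked carefully, but the functors you are computing with are the right ones.

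Your ``main obstacle'' — identifying the adjoint of the flober embedding with the \GPS{} stop-removal — is addressed by the paper only as a remark (``The functors $q_{\pm*}$ between Fukaya categories are isomorphic to stop removal functors as given in~\cite{GPS}''), so this is not a gap unique to your approach. Both approaches implicitly accept this identification, and the sheaf-side construction of $\mpr_{\pm!}$ in Proposition~\ref{proposition left adjoint} is what ultimately justifies it.
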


\noindent We immediately obtain the following statement of homological mirror symmetry for the singular space~$X_0$.

\begin{keycorollary}[Corollary~\ref{cor sing equiv}]\label{keycorollary sing equiv} The equivalence of Theorem~\ref{keytheorem flober equiv} induces an equivalence
\begin{equation*}\label{equation sing equiv}
D(X_0) \cong \cW_{\Lambda_-^\infty \cap \Lambda_+^\infty}(\Omega_T).
\end{equation*}
\end{keycorollary}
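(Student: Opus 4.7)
The plan is to treat this as an essentially formal consequence of Theorem~\ref{keytheorem flober equiv} and Proposition~\ref{keyproposition a-side cohom}, by exploiting the functoriality of the cohomology operator $\mathbb{H}^2_c$. Since \BKS{} define $\mathbb{H}^2_c$ of a flober as the homotopy push-out of the defining diagram of embeddings, and homotopy colimits are $2$-functorial, any equivalence between two flobers (an equivalence of diagrams that commutes, up to natural equivalence, with the embeddings on each side) induces a canonical equivalence on the resulting push-out categories.

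Concretely, I would proceed in three steps. First, I would recall that for the B-side flober~\eqref{equation flober B}, \BKS{} identify
\[
\mathbb{H}^2_c\bigl(D(X_-) \hookrightarrow D(X_B) \hookleftarrow D(X_+)\bigr) \simeq D(X_0),
\]
so nothing new is required here beyond citing their result. Second, I would invoke Proposition~\ref{keyproposition a-side cohom}, which identifies $\mathbb{H}^2_c$ of the symplectic flober of Theorem~\ref{keytheorem flober equiv} with $\cW_{\Lambda_-^\infty \cap \Lambda_+^\infty}(\Omega_T)$. Third, I would apply $\mathbb{H}^2_c$ to the equivalence of flobers provided by Theorem~\ref{keytheorem flober equiv}, obtaining an induced equivalence between the two push-outs, and conclude by transitivity.

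The only genuine subtlety — and therefore the point I would treat most carefully — is checking that the equivalence of Theorem~\ref{keytheorem flober equiv} is really an equivalence of diagrams, not just a termwise equivalence of the three categories $\cW_{\Lambda_-^\infty}(\Omega_T), \cW_{\Lambda_B^\infty}(\Omega_T), \cW_{\Lambda_+^\infty}(\Omega_T)$ and $D(X_\pm), D(X_B)$. One needs a coherent choice of natural equivalences exhibiting compatibility with the two embedding functors on each side, at the level of stable $\infty$-categories (or dg-enhancements). This compatibility should follow from the proof of Theorem~\ref{keytheorem flober equiv} itself, since the equivalences there are constructed through the coherent--constructible correspondence and \GPS{}, both of which respect the inclusion of stops $\Lambda_\pm^\infty \subset \Lambda_B^\infty$; in the write-up I would isolate this compatibility as the content of the corollary's proof.

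Assuming this diagrammatic compatibility is in hand, the corollary is immediate: $\mathbb{H}^2_c$ applied to the equivalence of flobers yields the chain of equivalences $D(X_0) \simeq \mathbb{H}^2_c(\text{B-side}) \simeq \mathbb{H}^2_c(\text{A-side}) \simeq \cW_{\Lambda_-^\infty \cap \Lambda_+^\infty}(\Omega_T)$, as desired. No further geometric input is needed; the main work has already been absorbed into Theorem~\ref{keytheorem flober equiv} and Proposition~\ref{keyproposition a-side cohom}.
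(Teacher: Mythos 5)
Your proposal matches the paper's argument exactly: the paper likewise observes that Theorem~\ref{theorem flober equiv} yields $\mathbb{H}^2_c(\Delta,\cP_A)\simeq\mathbb{H}^2_c(\Delta,\cP_B)$, and then combines the B-side identification $\mathbb{H}^2_c(\Delta,\cP_B)\simeq D(X_0)$ due to \BKS{} with the A-side computation of Proposition~\ref{proposition a-side cohom} to conclude by transitivity. Your explicit caveat that one needs an equivalence of diagrams, not merely termwise equivalences of the three categories, in order to pass to the homotopy push-out is a correct and worthwhile point that the paper leaves implicit.
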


An equivalence of the categories in Corollary~\ref{keycorollary sing equiv} follows from work of the second author in~\cite{K2}, as the methods there apply in singular cases. The corollary provides a new alternatively proof of this, which uses the results of~\cite{K2} only for smooth cases, along with the categorified cohomology operator~$\mathbb{H}^2_c$.

\begin{remark}\BKS{} also define 1st cohomology~$\mathbb{H}^1$ of a flober~\cite[Section~2D]{BonKapSch}: for the flober~\eqref{equation flober B} this is by definition a quotient of $D(X_B)$ by the triangulated subcategory generated by the embeddings of~$D(X_\pm)$. In this language, the spherical pairs of Theorem~\ref{theorem schober equivalence} are special in the sense that their~$\mathbb{H}^1$ is zero, whereas the flober~\eqref{equation flober B} and the flober in Theorem~\ref{keytheorem flober equiv} have non-zero~$\mathbb{H}^1$.
\end{remark}

\subsection{Method of proof}\label{section method} Let $X$ be one of the toric varieties or stacks appearing above, and let $T$ be the corresponding dual torus: namely, if $X$ is described by a toric fan $\Sigma$ in $N_\bR$ where $N=\Hom(M,\bZ)$ for a lattice $M$, then we take $T=M_\bR/M$. Write $\wSh_{\Lambda}(T)$ for a category of {\em wrapped constructible sheaves} on~$T$, with microsupport~$\Lambda$ in~$\cotangent{T}$, where $\Lambda$ is the {\em FLTZ skeleton} for the toric data. Furthermore let $\Lambda^\infty$ be the associated locus at infinity in~$\cotangent{T}$, where full details are given in Section~\ref{section microlocal fukaya}. Then the homological mirror symmetry equivalences used above are obtained by composition as follows.
\[\cW_{\Lambda^\infty}(\cotangent{T}) \underset{\scriptsize (1)}{\cong} \wSh_{\Lambda}(T)^{\op} \underset{\scriptsize (2)}{\cong} D(X)^{\op} \underset{\scriptsize (3)}{\cong} D(X)\]
\begin{enumerate}
\item\label{enum hms step 1} This is proved in recent work of \GPandS~\cite{GPS}, refining a result of Nadler--Zaslow~\cite{NZ}.
\item\label{enum hms step 2} This is a version of the coherent-constructible correspondence, as proved by the second author in~\cite{K2}.
\item\label{enum hms step 3}  Here we take the derived dual $\RHom(\placeholder,\cO_X)$ to give a covariant composition.
\end{enumerate}

We prove Theorem~\ref{theorem schober equivalence} by showing that the diagrams~(\ref{equation sph pair A}) and~(\ref{equation sph pair B}) appearing there are equivalent to the following diagram (where, for convenience, we drop `$\op$' from the notation).
\begin{equation}\tag{C}\label{equation opp sph pair C}
 \begin{tikzpicture}[scale=2.2,baseline={([yshift=-.5ex]current bounding box.center)}]
\node (Bminus) at (-1.5,1) {$ \wSh_{\Lambda_-}(T) $};
\node (Bzero) at (0,1) {$ \wSh_{\Lambda_- \cup \Lambda_+}(T) $};
\node (Bplus) at (+1.5,1) {$ \wSh_{\Lambda_+}(T) $};
\draw[right hook->] (Bminus) to node[above] {} (Bzero);
\draw[left hook->] (Bplus) to node[above] {} (Bzero);
\end{tikzpicture}
\end{equation}
For the symplectic side diagram~(\ref{equation sph pair A}), this equivalence is formal, following from~(\ref{enum hms step 1}) and by construction. For the complex side~(\ref{equation sph pair B}), we proceed as follows. We first apply the equivalence~(\ref{enum hms step 2}) to the spaces which appear in the flober~\eqref{equation flober B}. From these we deduce that there is an embedding \[\kappa \colon \window \hookrightarrow \wSh_{\Lambda_- \cup \Lambda_+}(T).\]
We prove that $\kappa$ is an equivalence by constructing and comparing certain {\em semiorthogonal decompositions} of these categories, as follows. To illustrate our proof, and for interest, these decompositions are presented in Section~\ref{section outline} for the surface case.
\begin{itemize}
\item A decomposition of $\wSh_{\Lambda_- \cup \Lambda_+}(T)$ with a component~$\wSh_{\Lambda_\pm}(T)$ arises from an explicit presentation of the skeleton~$\Lambda_- \cup \Lambda_+$.
\item A decomposition of~$\window$ with a component~$D(X_\pm)$ is standard in the \mbox{$3$-fold}~case. For the surface case, we construct one by studying derived categories of appropriate GIT quotients.
\end{itemize}
We thus establish that $\kappa$ is an equivalence. It follows from the decompositions of~$\window$ that the diagram~(\ref{equation sph pair B}) gives a spherical pair, and we thence deduce that~(\ref{equation opp sph pair C}) gives a spherical pair. That the diagram~(\ref{equation sph pair A}) gives a spherical pair then follows by composing with the equivalence~(\ref{enum hms step 1}).

Theorem~\ref{keytheorem flober equiv} is proved by a similar, but simpler, argument using a diagram of wrapped constructible categories as follows.
\begin{equation*}\label{equation flober C}
 \begin{tikzpicture}[scale=2.2,baseline={([yshift=-.5ex]current bounding box.center)}]
\node (Bminus) at (-1.5,1) {$ \wSh_{\Lambda_-}(T) $};
\node (Bzero) at (0,1) {$ \wSh_{\Lambda_B}(T) $};
\node (Bplus) at (+1.5,1) {$ \wSh_{\Lambda_+}(T) $};
\draw[right hook->] (Bminus) to node[above] {} (Bzero); 
\draw[left hook->] (Bplus) to node[above] {} (Bzero); 
\end{tikzpicture}
\end{equation*}

To prove Proposition~\ref{keyproposition a-side cohom} we adapt a method of \BKandS{} to show that $\mathbb{H}^2_c$ applied to the the diagram above gives $ \wSh_{\Lambda_0}(T) $, using explicit presentation of skeleta. The result follows again by composing with the equivalence~(\ref{enum hms step 1}).

\begin{remark} In the course of the proof of Proposition~\ref{keyproposition a-side cohom} we obtain a new proof of an instance of the coherent constructible correspondence, namely
\[
D(X_0) \cong \wSh_{\Lambda_0}(T).
\]
At the end of paper, we present a conjectural picture about how this method could be extended to prove more general such results.
\end{remark}

\subsection{Related work} Nadler has also discussed mirror equivalences of schobers on the disk~\cite{N2}. In this case a different skeleton~$K$ is used so that the schober takes the form of a {\em spherical functor}. He proves a homological mirror symmetry statement relating a certain Landau--Ginzburg A-model to the B-model for the higher-dimensional pair of pants~\cite[Corollary~1.5]{N2} by deducing it from a mirror equivalence of such schobers~\cite[Theorem~1.4]{N2}. The A-model schober in this case is over a disk in the space of values of the (complex) superpotential.

Harder and Katzarkov have used schobers to given a new proof of a mirror equivalence for the projective space~$\bP^3$~\cite{HarKat}.

The first author previously constructed schobers on the complex side of mirror symmetry, associated to wall crossings in GIT~\cite{Don}. In subsequent work he applied these to study schobers associated to standard flops, which led to a discussion of the Questions~(\ref{question a}) and~(\ref{question b})~\cite[Section~1.2]{Don2}. In forthcoming work~\cite{DW5}, the first author
 and M.~Wemyss give a mathematical treatment of the stringy K\"ahler moduli for general $3$-fold flops of irreducible curves.

\begin{structure}
In \S\ref{section outline} we explain some details of our surface example informally. In \S\ref{section preliminaries} we give preliminaries. In \S\ref{section schober constructions} we construct the data of spherical pairs and flobers, and in \S\ref{section schober equiv} we prove that they satisfy the required properties, and give homological mirror symmetry equivalences between them, proving Theorems~\ref{theorem schober equivalence} and~\ref{keytheorem flober equiv}. Finally, in \S\ref{section sing HMS} we prove Proposition~\ref{keyproposition a-side cohom}, deduce Corollary~\ref{keycorollary sing equiv}, and finish with some remarks and conjectures on further applications of flobers to the coherent-constructible correspondence.
\end{structure}

\begin{acknowledgements} Both authors are grateful for conversations with A.\ Bondal and M.\ Kapranov at Kavli IPMU. The first author is grateful for discussions with T.\ Logvinenko, and thanks D.\ Auroux, T.\ Coates, D.\ Nadler for helpful conversations at an early stage of the project. The authors also thank the organizers of the conference `Categorical and Analytic Invariants in Algebraic Geometry V' at Osaka in 2018, where this collaboration started.

The two authors acknowledge the support of WPI Initiative, MEXT, Japan, and of JSPS KAKENHI Grants~JP16K17561 and~JP18K13405 respectively. The first author is supported by the Yau MSC, Tsinghua University, and the Thousand Talents Plan.
\end{acknowledgements}

\begin{conventions} We denote categories as follows: for details, see Section~\ref{section microlocal fukaya}.

\smallskip
\begin{tabular}{p{1cm} l}
$\cSh$ & constructible sheaves \\
$\lSh$ & quasi-constructible sheaves \\
$\wSh$ & wrapped constructible sheaves~\cite{N} \\
$D$ & bounded derived category of coherent sheaves
\end{tabular}
\end{conventions}

\section{Surface example}
\label{section outline}

In this section we present semiorthogonal decompositions for the surface case, to illustrate our proof, and for interest. 

\subsection{Setting} We take the singularity $ X_0 = \bC^2/\bZ_2 $, and take (stacky) resolutions $ X_\pm$ given by the corresponding Deligne--Mumford stack~$X_-$ and the minimal resolution~$X_+$, denoted as below. The fibre product~$ X_B $ is a further  Deligne--Mumford stack, given explicitly in Section~\ref{section surface geometry}.
\[
\begin{tikzpicture}[xscale=1,scale=1.25]

\node (Bminus) at (0,1) {$ X_B $};
\node (Bplus) at (1,0) {$ X_+ $};
\node (Dplus) at (2.5,0) {\phantom{$ \widetilde{\bC^2/\bZ_2} $}};
\node (DplusLabel) at (2.5,0.05) {$ \widetilde{\bC^2/\bZ_2} $};

\node (Cminus) at (-1,0) {$ X_- $};
\node (Dminus) at (-2.5,0) {$ [\bC^2/\bZ_2] $};

\node (Cplus) at (0,-1) {$ X_0 $};

\draw[->] (Bminus) to  (Bplus);  
\draw[->] (Cminus) to node[below left] {\scriptsize $f_-$} (Cplus);

\draw[<-] (Cminus) to (Bminus); 
\draw[<-] (Cplus) to node[below right] {\scriptsize $f_+$} (Bplus);

\draw[-,transform canvas={yshift=+\equalsSep}] (Dminus) to (Cminus);
\draw[-,transform canvas={yshift=-\equalsSep}] (Dminus) to (Cminus);

\draw[-,transform canvas={yshift=+\equalsSep}] (Dplus) to (Bplus);
\draw[-,transform canvas={yshift=-\equalsSep}] (Dplus) to (Bplus);

\end{tikzpicture}
\]

The fibres of $f_-$ and $f_+$ over the singularity~$0$ are the stacky point~$[0/\bZ_2]$, and a projective line~$\bP^1$. Let $\cE_-$ and $\cE_+$ be the structure sheaves of these fibres, tensored by the non-trivial irreducible representation of~$\bZ_2$, and the twisting sheaf $\cO(-1)$, respectively.

\def\labelShift{-0.48}
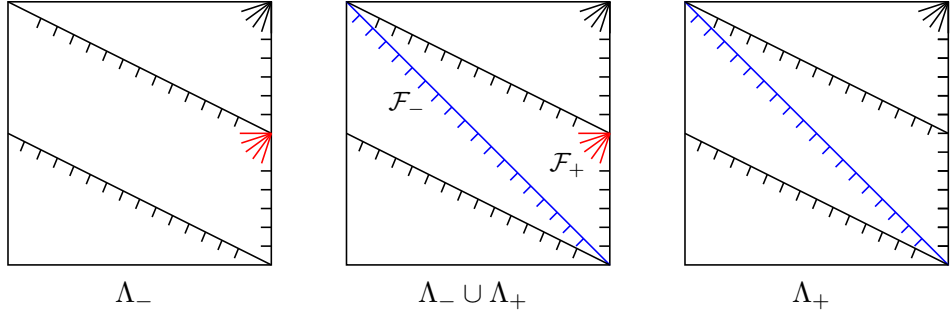
\begin{figure}[h]
\[
\begin{tikzpicture}[scale=4.5]
\node at (-1,0) {\surfpicture{1}};
\node at (0,0) {\surfpicture{3}};
\node at (1,0) {\surfpicture{2}};
\node at (-1,\labelShift) {\small $\Lambda_-$};
\node at (0,\labelShift) {\small $\Lambda_- \cup \Lambda_+$};
\node at (1,\labelShift) {\small $\Lambda_+$};
\end{tikzpicture}
\]
\caption{Skeleta for symplectic side schober}
\label{figure surface skeleta}
\end{figure}

Now take skeleta $\Lambda_\pm$ determined by the toric data of $X_\pm$, as illustrated in Figure~\ref{figure surface skeleta}, which we explain now. The construction of these skeleta is discussed in Section~\ref{section surface schober equiv}. The squares in the figure show a fundamental domain for the quotient $T=\bR^2/\bZ^2$. The `hairs' denote cotangent directions in~$\Omega_T$, and thereby indicate conical loci in~$\Omega_T$ corresponding to the skeleton~$\Lambda$. We~let $\microsky_-$ and $\microsky_+$ be microlocal skyscraper sheaves~\cite{N} in~$\wSh_{\Lambda_- \cup \Lambda_+}(T)$ corresponding to the cotangent directions marked.

\begin{keyproposition}[Propositions~\ref{prop schober surface}, \ref{sod}]
The category $\cP_0$ appearing in Theorem~\ref{theorem schober equivalence} has semiorthogonal decompositions as follows.
\begin{equation*}
 \la D(X_-), \cE_- \ra = \cP_0 = \la D(X_+), \cE_+ \ra
\end{equation*}
\noindent\smallskip The category $\wSh_{\Lambda_- \cup \Lambda_+}(T)$ has decompositions as follows.
\begin{equation*}
 \la \wSh_{\Lambda_-}(T), \microsky_- \ra = \wSh_{\Lambda_- \cup \Lambda_+}(T) = \la \wSh_{\Lambda_+}(T), \microsky_+ \ra
\end{equation*}
\end{keyproposition}

In the course of the proof we see that these categories, and the components in the above decompositions, all correspond under the coherent-constructible correspondence.

The embeddings of the $D(X_\pm)$ are the pullbacks $p_{\pm *}$, and it follows that the equivalences between $ D(X_-)$ and $D(X_+) $ required in the definition of a spherical pair follow from simple cases of results of Bridgeland--King--Reid~\cite{BKR}.

\begin{remark} We get decompositions of Fukaya categories
\begin{equation*}
 \big\langle \cG_-, \cW_{\Lambda_-^\infty}(\cotangent{T}) \big\rangle = \cW_{\Lambda_-^\infty \cup \Lambda_+^\infty}(\cotangent{T}) = \big\langle \cG_+, \cW_{\Lambda_+^\infty}(\cotangent{T}) \big\rangle
\end{equation*}
for certain objects $\cG_\pm$ of $\cW_{\Lambda_-^\infty \cup \Lambda_+^\infty}(\cotangent{T})$, by applying the equivalence of~\GPandS{}. (That the categories $\cW$ appear in the decompositions here on the right, whereas the categories $\wSh$ appear on the left in the proposition above, is due to the fact that this equivalence is contravariant.)
\end{remark}

\section{Preliminaries}
\label{section preliminaries}

\subsection{Perverse schobers}
\label{section schober defs}

We work with \emph{spherical pairs}, as follows.
Recall that a semi-orthogonal decomposition
$
\cT = \big\langle \cC, \cD \big\rangle,
$
is determined by embeddings
 \[\gamma \colon \cC \longrightarrow \cT \qquad\text{and}\qquad \delta \colon \cD \longrightarrow \cT,\] 
and induces projection functors given by adjoints as follows.
 \[\gamma^{\Ladj}\colon \cT \longrightarrow \cC \qquad\text{and}\qquad \delta^{\Radj}\colon \cT \longrightarrow \cD. \]
We may then make the following definition.

\begin{definition}\label{definition sph pair}\cite{KS2}
A \emph{spherical pair} $\cP$ is  a triangulated category $\cP_0$ with admissible subcategories $\cP_\pm$ and semi-orthogonal decompositions
\begin{equation*}
\big\langle \cQ_-, \cP_- \big\rangle = \cP_0 = \big\langle \cQ_+, \cP_+ \big\rangle,
\end{equation*}
such that compositions of the embeddings and projections above
 \[ \cQ_- \longleftrightarrow \cQ_+ \qquad\text{and}\qquad \cP_- \longleftrightarrow \cP_+, \]
are equivalences.
\end{definition}

\begin{remark}As indicated in Section~\ref{background}, this data should be thought of as a categorification of vector space data determining a perverse sheaf. Further discussion is given by \KS{} in~\cite[Section~9A]{KS1} and~\cite{KS2}.
\end{remark}

\begin{definition} An isomorphism between spherical pairs~$\cP$ and $\cP'$ consists of equivalences $\cP_\bullet\simeq\cP'_\bullet$ and $\cQ_\bullet\simeq\cQ'_\bullet$ intertwining the embeddings.
\end{definition}

A weaker notion is discussed by \BKandS~\cite{BonKapSch} omitting the condition on orthogonals $\cQ_\bullet$.

\begin{definition}\cite[Section~1B]{BonKapSch}
A \emph{flober} or \emph{weak spherical pair} $\cP$ is a triangulated category  $\cP_0$ with admissible subcategories $\cP_\pm$ with embeddings $\delta_\pm$ such that the 
compositions of $\delta_\pm$ and $\delta_\pm^{\Radj}$ as follows 
 \[ \cP_- \longleftrightarrow \cP_+ \] are equivalences.
\end{definition}

A spherical pair yields a flober in the obvious way.

\begin{example} Flobers arise from~$3$-fold flops and more general flops of curves, see~\cite[Section~1B,~1C]{BonKapSch} for discussion.\end{example}

\BKS{} make following definition.

\begin{definition}\cite[Section~2E]{BonKapSch}
For a weak spherical pair $\cP$, the homology with compact support $\mathbb{H}^2_c(\Delta,\cP)$ is defined as the homotopy push-out in the Morita model of the following.
\begin{equation*}
\begin{tikzpicture}[xscale=1,scale=1.25]

\node (Bminus) at (0,1) {$ \cP_0 $};
\node (Bplus) at (1,0) {$ \cP_+ $};

\node (Cminus) at (-1,0) {$ \cP_- $};

\draw[->] (Bminus) to node[above right] {\scriptsize $\delta_+^{\Radj}$}  (Bplus);
\draw[<-] (Cminus) to node[above left] {\scriptsize $\delta_-^{\Radj}$}  (Bminus);

\end{tikzpicture}
\end{equation*}
\end{definition}

We will often say, for brevity, that a spherical pair $\cP$ is determined by data written as follows, sometimes omitting the adjoint functors.
\begin{equation*}
 \begin{tikzpicture}[scale=2]
\node (Bminus) at (-1,1) {$ \cP_- $};
\node (Bzero) at (0,1) {$ \cP_0 $};
\node (Bplus) at (+1,1) {$ \cP_+ $};
\draw[right hook->,transform canvas={yshift=+\arrowGap}] (Bminus) to node[above] {\scriptsize $\delta_-$} (Bzero);
\draw[left hook->,transform canvas={yshift=+\arrowGap}] (Bplus) to node[above] {\scriptsize $\delta_+$} (Bzero);
\draw[<-,transform canvas={yshift=-\arrowGap}] (Bminus) to node[below] {\scriptsize $\delta_-^{\Radj}$} (Bzero);
\draw[<-,transform canvas={yshift=-\arrowGap}] (Bplus) to node[below] {\scriptsize $\delta_+^{\Radj}$} (Bzero);
\end{tikzpicture}
\end{equation*}
As we will study anti-equivalences, it will also be convenient to say that a spherical pair is determined by the \emph{opposite} of categorical data written as follows, where we reuse the notation $\delta_\pm$ for the induced functors below.
\begin{equation*}
 \begin{tikzpicture}[scale=2]
\node (Bminus) at (-1,1) {$ \cP^{\op}_- $};
\node (Bzero) at (0,1) {$ \cP^{\op}_0 $};
\node (Bplus) at (+1,1) {$ \cP^{\op}_+ $};
\draw[right hook->,transform canvas={yshift=+\arrowGap}] (Bminus) to node[above] {\scriptsize $\delta_-$} (Bzero);
\draw[left hook->,transform canvas={yshift=+\arrowGap}] (Bplus) to node[above] {\scriptsize $\delta_+$} (Bzero);
\draw[<-,transform canvas={yshift=-\arrowGap}] (Bminus) to node[below] {\scriptsize $\delta_-^{\Ladj}$} (Bzero);
\draw[<-,transform canvas={yshift=-\arrowGap}] (Bplus) to node[below] {\scriptsize $\delta_+^{\Ladj}$} (Bzero);
\end{tikzpicture}
\end{equation*}
In particular, given such data we have semi-orthogonal decompositions
\begin{equation*}
\big\langle \cP^{\op}_-, \cQ^{\op}_- \big\rangle = \cP^{\op}_0 = \big\langle \cP^{\op}_+, \cQ^{\op}_+ \big\rangle,
\end{equation*}
and $\mathbb{H}^2_c(\Delta,\cP)$ is given by a push-out of the following.
\begin{equation*}
\begin{tikzpicture}[xscale=1,scale=1.25]

\node (Bminus) at (0,1) {$ \cP^{\op}_0 $};
\node (Bplus) at (1,0) {$ \cP^{\op}_+ $};

\node (Cminus) at (-1,0) {$ \cP^{\op}_- $};

\draw[->] (Bminus) to node[above right] {\scriptsize $\delta_+^{\Ladj}$}  (Bplus);
\draw[<-] (Cminus) to node[above left] {\scriptsize $\delta_-^{\Ladj}$}  (Bminus);

\end{tikzpicture}
\end{equation*}

\subsection{Fukaya category and microlocal sheaf theory}
\label{section microlocal fukaya}
In this subsection, we give a brief introduction to a relationship between the Fukaya category and microlocal sheaf theory.
Let $Z$ be a real analytic manifold and $k_Z$ be the constant sheaf valued in a field $k$. We denote the bounded derived category of $k_Z$-modules by $\mod(k_Z)$. We define microsupport (also known as `singular support'), one of the most important notion in microlocal sheaf theory. 

\begin{definition}
The microsupport $\SS(\cE)$ of $\cE\in \mod(k_Z)$ is a subset of the cotangent bundle $\cotangent{Z}$ defined by its complement as follows: $(x, \xi)\in \cotangent{Z}$ (where $x\in Z$ and $\xi\in \cotangentFibre{Z}{x}$) is \underline{not} contained in $\SS(\cE)$ if there exists an open neighbourhood~$U$ of $(x, \xi)$ such that
\begin{equation*}
\big(\RGamma_{\lc z\relmid \psi(z)\geq \psi(y)\rc}\cE\,\big)_y \simeq 0
\end{equation*}
for any point $y\in Z$ and any smooth function $\psi$ with $\Graph(d\psi)\subset U$.
\end{definition}

We view $\cotangent{Z}$ as a symplectic manifold with its standard exact symplectic structure. The microsupport $\SS(\cE)$ is a Lagrangian subset of $\cotangent{Z}$ if $\cE$ is constructible:
\begin{definition}
A sheaf $\cE$ is constructible (respectively quasi-constructible) if there exists a real analytic Whitney stratification $\cS$ of $Z$ such that the restriction of $\cE$ to each stratum $S\in \cS$ is a locally constant sheaf of finite rank (respectively locally constant sheaf possibly of infinite rank). For the definition of such a stratification, we refer to \cite{sheaveson}.
\end{definition}
\begin{theorem}[Involutivity theorem \cite{sheaveson}]
An object $\cE\in \mod(k_Z)$ is quasi-constructible if and only if $\SS(\cE)$ is Lagrangian.
\end{theorem}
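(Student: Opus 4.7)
The plan is to prove the two implications separately, with the forward direction being essentially a bookkeeping of microsupport bounds and the reverse being the substantive content.

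For the forward direction, I would start from a real analytic Whitney stratification $\cS$ with $\cE|_S$ locally constant for each stratum $S$. Using the standard bound on microsupport of a locally constant sheaf pushed forward from a stratum, together with the triangle estimate $\SS(\cF\oplus\cG)\subset \SS(\cF)\cup\SS(\cG)$ and the estimate for $\SS$ under triangles, one obtains
\[
\SS(\cE)\subset \bigcup_{S\in\cS}T^*_SZ,
\]
where $T^*_SZ$ denotes the conormal bundle to $S$. Since a Whitney stratification guarantees that $\bigcup_SF\overline{T^*_SZ}$ is a closed conic Lagrangian subset of $\cotangent{Z}$ (the conormal bundles are Lagrangian and the Whitney conditions ensure the closure stays half-dimensional), we deduce $\SS(\cE)$ is Lagrangian.

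For the reverse direction, the plan has two steps. First, $\SS(\cE)$ is by hypothesis a closed, conic, (sub)analytic Lagrangian subset of $\cotangent{Z}$; I would invoke the structure theorem for conic Lagrangian subanalytic subsets to produce a real analytic Whitney stratification $\cS$ of $Z$ such that
\[
\SS(\cE)\subset \bigcup_{S\in\cS}T^*_SZ.
\]
Second, I would show $\cE|_S$ is locally constant (possibly of infinite rank) for each stratum $S$. This is where the main work lies and where I expect the principal obstacle: on each $S$, the only covectors in $\SS(\cE)$ are conormal to $S$, so for any two points $x_0,x_1\in S$ in the same connected stratum one can choose a path and a family of test functions $\psi_t$ whose differentials avoid $\SS(\cE)$ outside the zero section; applying the non-characteristic deformation lemma (microlocal Morse lemma) along this family propagates stalks, yielding $(\cE|_S)_{x_0}\simeq (\cE|_S)_{x_1}$, and moreover identifies the restriction maps so that $\cE|_S$ is locally constant. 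No finiteness is claimed, so the quasi-constructible conclusion follows.

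The hardest step is the propagation argument in the reverse direction, because one must promote a pointwise stalk statement into a genuine local constancy of the complex of sheaves on each stratum, which requires the full strength of the non-characteristic deformation machinery together with the Whitney regularity of $\cS$ to control limits of covectors from neighbouring strata. Granting this, the two implications combine to give the equivalence. Since the excerpt cites \cite{sheaveson}, I would expect the write-up to reduce to quoting these two ingredients (the subanalytic Lagrangian stratification lemma and the non-characteristic propagation) rather than re-deriving them.
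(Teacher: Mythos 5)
The paper does not prove this statement; it is quoted from Kashiwara--Schapira \cite{sheaveson} (essentially Theorem~8.4.2 and its weakly constructible variant), so there is no ``paper's proof'' to compare against. Your sketch follows the standard KS argument, and the reverse direction is laid out correctly: the structure theory of closed conic subanalytic isotropic subsets produces a $\mu$-stratification with $\SS(\cE)\subset\bigcup_S T^*_SZ$, and local constancy on strata then follows from the propagation/non-characteristic deformation estimates. You are also right to quietly add ``subanalytic'' to the hypotheses, which the paper's phrasing suppresses but which is needed for the stratification step.

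There is, however, a genuine gap in your forward direction. The containment $\SS(\cE)\subset\bigcup_S T^*_SZ$ only shows that $\SS(\cE)$ is \emph{isotropic} (of dimension $\leq\dim Z$); a proper subset of a Lagrangian need not be Lagrangian. To conclude that $\SS(\cE)$ has pure middle dimension you must additionally invoke the involutivity theorem of Kashiwara--Schapira (Theorem~6.5.4, valid for an arbitrary sheaf), which says $\SS(\cE)$ is always coisotropic, hence of dimension $\geq\dim Z$ wherever it is nonempty. Combining ``contained in a Lagrangian'' with ``coisotropic'' is what yields ``Lagrangian.'' This is almost certainly why the paper attaches the name ``Involutivity theorem'' to the statement: the substantive input for the easy-looking implication is precisely involutivity. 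As written, your proposal deduces Lagrangian from containment alone, which does not follow.
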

We write $\cSh(Z)$ (respectively~$\lSh(Z)$) for the (dg-)category of bounded complexes of constructible sheaves (respectively  unbounded complexes of quasi-constructible sheaves) over $Z$. For a subset $\Lambda\subset \cotangent{Z}$, the full subcategory of $\cSh(Z)$ (respectively~$\lSh(Z))$ spanned by objects with microsupport living inside $\Lambda$ is denoted by $\cSh_\Lambda(Z)$ (respectively~$\lSh_\Lambda(Z))$.

Before stating a relationship with Fukaya category, we introduce one more category.
\begin{definition}
Let $\Lambda\subset \cotangent{Z}$ be a subset. The full subcategory $\wSh_\Lambda(Z)$ of $\lSh_{\Lambda}(Z)$ is defined by the following: $\cE\in \wSh_\Lambda(Z)$ if and only if 
\[\Hom_{\lSh_{\Lambda}(Z)}(\cE, -)\colon \lSh_{\Lambda}(Z)\rightarrow \mathrm{\bf{Vect}}\] commutes with any direct sums. We call an object of $\wSh_\Lambda(Z)$ a wrapped constructible sheaf.
\end{definition}

A relationship between microlocal sheaf theory and the Fukaya category was first clearly stated by Nadler--Zaslow by using an `infinitesimally wrapped Fukaya category'. Our setting is a more recent variant of Nadler--Zaslow established by \GPS~\cite{GPS}: Let $Z$ be a real analytic manifold and $\Omega_Z$ be the cotangent bundle of $Z$. Let $\{x_i\}$ be local coordinates of $Z$ and $\{\xi_i\}$ be the corresponding cotangent coordinates. Then $\Omega_Z$ has an exact symplectic structure locally written as $\sum_id\xi_i\wedge dx_i$. Let us fix $g$ a Riemannian metric over $Z$. Then we define the cosphere bundle by 
\[
S^*Z_a:=\lc (x, \xi)\in \Omega_Z\relmid g_x(\xi, \xi)=a\rc.
\]
The symplectic structure induces a contact structure on $S^*Z_a$. Since the $S^*Z_a$ for various $a$ are contactoisomorphic to each other, we consider $a\rightarrow \infty$ virtually and call this abstract contact manifold {\em contact infinity} and denote it by $\contactInfinity{Z}$. Let $\Lambda^\infty$ be a subanalytic Legendrian of $\contactInfinity{Z}$. Then one can define an $A_\infty$-category $\cW_{\Lambda^\infty}(\cotangent{Z})$, the wrapped Fukaya category of $\cotangent{Z}$ with the stop $\Lambda^\infty$. Roughly, this category consists of 
\begin{enumerate}
\item Objects: (possibly noncompact) Lagrangian submanifold whose noncompact ends live away from $\Lambda^\infty$.
\item Morphisms: wrapped Floer cohomology.
\end{enumerate}

Given a Legendrian $\Lambda^\infty$ as above, we obtain a locus in $\cotangent{Z}$ as follows.
\begin{definition} Let $\Lambda\subset \cotangent{Z}$ be given by $\Lambda=(\bR_{>0}\cdot \Lambda^\infty) \cup Z$ where $\bR_{>0}$ acts by scaling the cotangent fibers, and $Z$ is the zero section in $\cotangent{Z}$.
\end{definition}
\noindent We can also go the other way around: namely, for a given conic (i.e.~invariant under $\bR_{>0}$) Lagrangian $\Lambda$ in $\cotangent{Z} \bs Z$ we have the following.
\begin{definition} We obtain a Legendrian $\Lambda^\infty\subset \contactInfinity{Z}$ as follows. First, for any~$a$ we obtain a Legendrian in $S^*Z_a$ by $\Lambda_a=S^*Z_a\cap \Lambda$. The conicness implies that $\Lambda_a$ is preserved under the isomorphism $S^*Z_a\cong S^*Z_b$ for any~$a, b$. Hence we get a Legendrian $\Lambda^\infty$ in the abstract contact manifold~$\contactInfinity{Z}$.
\end{definition}

We may now state the following theorem.

\begin{theorem}[{Ganatra--Pardon--Shende~\cite{GPS}}]\label{theorem GPS}
There exists an equivalence
\begin{equation*}
\wSh_{\Lambda}(Z)^{\op}\simeq \cW_{\Lambda^\infty}(\cotangent{Z})
\end{equation*}
of $A_\infty$-categories, where $\op$ denotes the opposite category.
\end{theorem}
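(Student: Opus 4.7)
The plan is to prove this via a (co)sheaf-of-categories argument on the skeleton $\Lambda$, showing that both sides satisfy the same local-to-global descent and agree on elementary building blocks. This is the architecture of the proof by \GPandS{}, and I would assemble it in the following stages.

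First, I would establish matching generation results on each side. On the constructible side, one shows that $\wSh_\Lambda(Z)$ is compactly generated by ``wrapped corepresentatives'' associated to small Lagrangian disks transverse to $\Lambda$ at smooth points, with morphisms computable by microlocal monodromy. On the symplectic side, the partially wrapped category $\cW_{\Lambda^\infty}(\cotangent{Z})$ is generated by the Lagrangian cocores of the Liouville sector obtained by removing a neighborhood of $\Lambda^\infty$, and these are again Lagrangian disks transverse to the skeleton. Pairing generating sets with the same combinatorial index is the first structural step, and already accounts for the presence of $\op$: wrapping Lagrangians positively corresponds dually to extending sheaves contravariantly in the microsupport direction.

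Second, I would prove the local comparison on a standard model. For $Z=\bR^n$ with $\Lambda^\infty$ empty or a smooth Legendrian, both $\cW_{\Lambda^\infty}(\cotangent{Z})$ and $\wSh_\Lambda(Z)$ are known to be equivalent to modules over an explicit $A_\infty$-algebra (cotangent fibre cochains on one side, microlocal stalk on the other), and the equivalence is set up so that morphism complexes between cocores and wrapped corepresentatives match. Nadler--Zaslow type arguments, refined to the wrapped setting, give this basic case.

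Third, and most importantly, I would promote both assignments to (co)sheaves of $A_\infty$-categories on $Z$ (or on the stratification underlying $\Lambda$) and invoke descent. On the symplectic side this is the cosheaf property of the partially wrapped Fukaya category on its stop, proved in \cite{GPS} via a Mayer--Vietoris / Viterbo restriction package: gluing of Liouville sectors along Weinstein hypersurfaces is reflected by a pushout of wrapped categories, using generation by cocores and a K\"unneth / stop-removal formula. On the constructible side, the analogous statement is the microlocalization theorem, giving $\wSh$ as a cosheaf on $\Lambda$. Once both are cosheaves valued in presentable $A_\infty$-categories and agree on a basis of opens, the equivalence is forced by descent.

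The hard part is unquestionably step three: establishing the cosheaf property of partial wrapping, which is the technical heart of \cite{GPS}. It requires careful control of positive isotopies near the stop, a generation result for Liouville sectors by cocores, and a proof that Viterbo restriction implements the relevant localization of $A_\infty$-categories. By comparison, matching conventions (gradings, orientations, and the handedness of wrapping responsible for the~$\op$) is delicate but mechanical, and the local comparison in step two reduces to a computation in a single cotangent fibre. With the descent package in place, the global equivalence follows formally.
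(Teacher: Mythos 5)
The paper does not supply its own proof of this statement: it is quoted directly from Ganatra--Pardon--Shende~\cite{GPS}, where it is the main theorem, and the present paper follows it only with a remark that the~$\op$ could be removed at the cost of negating the Liouville form (or performing the analogous operation on $\Lambda^\infty$). There is therefore no internal argument to compare your account against.

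For what it is worth, your outline --- matched generation on both sides by Lagrangian disks transverse to the skeleton (wrapped corepresentatives on the sheaf side, cocores/linking disks on the Floer side), a local comparison in a cotangent-fibre model, and promotion of both assignments to cosheaves of $A_\infty$-categories followed by descent --- is a reasonable high-level description of the architecture of~\cite{GPS}, and you are right that sectorial descent for partially wrapped Fukaya categories is the technical core. One small imprecision: \cite{GPS} does not argue quite by ``agree on a basis and invoke descent'' as a black box; rather they build an intermediate ``microlocal Morse theory'' object and establish comparison functors in a specific order, with the generation-by-linking-disks result doing much of the work of identifying the two cosheaves. But these are nuances of presentation of the external reference, not of anything the present paper proves.
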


\begin{remark} We may remove the $\op$ in the above, but at the expense of negating the Liouville form, or performing a similar operation on~$\Lambda^\infty$~\cite[after Theorem~1.1]{GPS}.\end{remark}

It is expected that this theorem generalizes to the case of Weinstein manifolds instead of $\cotangent{Z}$. This is known as Kontsevich's conjecture~\cite{Kon}. Progress towards a proof has been made by many people.

A point for us is that a Landau--Ginzburg model gives an isotropic subset $\Lambda^\infty$, hence a partially wrapped Fukaya category. The A-brane category associated to a Landau--Ginzburg model is sometimes described as a Fukaya--Seidel category generated by Lefschetz thimbles (or vanishing cycles). The Fukaya--Seidel category and the partially wrapped Fukaya category associated to a single Landau--Ginzburg model are sometimes different.
\begin{example}\label{exampleFS}
We take $W(z)=z$ on $\bC^*$. Then the Fukaya--Seidel category is empty but the partially wrapped Fukaya category is equivalent to the derived category of coherent sheaves over $\bA^1$.
\end{example}
Hence, to study mirror symmetry, the partially wrapped Fukaya category is appropriate.

\smallskip
The Lagrangian skeletons for mirror Landau--Ginzburg models of toric varieties can be combinatorially defined. Such skeletons were first proposed by Fang--Liu--Treumann--Zaslow. 

\begin{notation} We use standard notation in the toric setting, as follows.

\smallskip
\begin{tabular}{p{1cm} l}
$M$ & a rank $n$ free abelian group \\
$N$ & the dual of $M$ \\
$\Sigma$ & a rational finite fan in $N_\bR:=N\otimes_\bZ\bR$ \\
$X_\Sigma$ & the toric variety of $\Sigma$ \\
$T$ & real torus $M_\bR/M$ \\
$\torusProj$ & projection $M_\bR\rightarrow T$
\end{tabular}
\medskip

\noindent For a subset $\sigma\subset N_\bR$, we set
\begin{equation*}
\sigma^\perp:=\lc m\in M_\bR\relmid n(m)=0 \text{ for any $n\in \sigma$}\rc.
\end{equation*}
\end{notation}

We then have the following.

\begin{definition}[FLTZ skeleton]
\begin{equation*}
\Lambda_{\Sigma}:=\bigcup_{\sigma\in\Sigma}\torusProj(\sigma^\perp)\times (-\sigma)\subset T\times N_\bR\cong \cotangent{T}.
\end{equation*}
\end{definition}

The following result is due to Gammage--Shende~\cite{GS}, Zhou~\cite{Z}, and Ganatra--Pardon--Shende~\cite{GPS}.
\begin{theorem}
The FLTZ skeleton at infinity $\Lambda_{\Sigma}^\infty$ is a Weinstein skeleton of a generic fiber of the Hori--Vafa mirror potential. If moreover $X_\Sigma$ is Fano, then~$\cW_{\Lambda_\Sigma^\infty}(\cotangent{T})$, and hence~$\wSh_{\Lambda_\Sigma}(T)$, is equivalent to the Fukaya--Seidel category of~$W$.
\end{theorem}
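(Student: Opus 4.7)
The plan is to handle the two assertions separately: first, that $\Lambda_\Sigma^\infty$ is realized as the Weinstein skeleton of a generic fiber of the Hori--Vafa potential; second, that under the Fano hypothesis this promotes to an equivalence of the partially wrapped category with the Fukaya--Seidel category.

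For the first assertion, I would start with the explicit Hori--Vafa superpotential $W = \sum_\rho c_\rho z^\rho \colon (\bC^*)^n \to \bC$, where $\rho$ ranges over primitive generators of the rays of $\Sigma$ and the coefficients $c_\rho$ are generic. Under polar coordinates $(\bC^*)^n \cong T \times N_\bR \cong \cotangent{T}$, a smooth fiber $F_t = W^{-1}(t)$ for generic $t$ becomes an exact symplectic hypersurface with an induced Liouville structure, hence a Weinstein manifold after a compatible choice of Morse function. The asymptotic behaviour of $F_t$ is controlled by the tropicalization map $\operatorname{Log} \colon (\bC^*)^n \to N_\bR$: the image concentrates along the tropical amoeba of $W$, which by construction is the codimension-one skeleton of the fan $\Sigma$. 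Working stratum by stratum, I would construct a Liouville-flow deformation retraction sending the piece of $F_t$ lying above a cone $\sigma$ onto the Legendrian $\pi(\sigma^\perp) \times (-\sigma)$ at contact infinity, which is exactly the contribution of $\sigma$ to $\Lambda_\Sigma$.

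For the second assertion, the Fano hypothesis confines the critical values of $W$ to a bounded region of $\bC$, so $W$ may be perturbed to a bona fide Lefschetz fibration. Its Fukaya--Seidel category $\mathrm{FS}(W)$ is then generated by Lefschetz thimbles over a basis of vanishing paths. By the GPS stop-removal and generation results, $\cW_{\Lambda_\Sigma^\infty}(\cotangent{T})$ is equivalent to the partially wrapped Fukaya category of the pair $((\bC^*)^n, F_t)$ where $F_t$ serves as a Weinstein hypersurface whose induced stop is $\Lambda_\Sigma^\infty$, as established in the first step. The identification of this pair with $\mathrm{FS}(W)$ is then the dictionary, due to Seidel, between partially wrapped categories stopped at a smooth fiber and categories of vanishing cycles, which applies precisely when critical values remain compactly supported. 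The secondary equivalence with $\wSh_{\Lambda_\Sigma}(T)$ then follows immediately by Theorem~\ref{theorem GPS}.

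The hardest step is the first assertion: although the combinatorial answer is dictated by the Newton polytope of $W$, rigorously producing a Liouville-flow deformation retract of $F_t$ onto a Weinstein CW complex matching $\Lambda_\Sigma^\infty$ requires delicate Morse-theoretic analysis, with a careful choice of Liouville primitive adapted to the amoeba decomposition. In Gammage--Shende's treatment of surfaces this is done essentially by hand; Zhou and then GPS extended it to higher dimension via Weinstein surgery and cobordism. The technical heart is verifying that pieces corresponding to different cones glue in a Weinstein-compatible, not merely topological, way, and that the resulting skeleton is stable under Weinstein homotopy as the generic data $(c_\rho, t)$ vary; without this stability the combinatorial identification with $\Lambda_\Sigma^\infty$ would be ill-defined.
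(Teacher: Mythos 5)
The paper does not prove this theorem: it cites it to Gammage--Shende, Zhou, and Ganatra--Pardon--Shende, and explicitly declines even to define ``Weinstein skeleton,'' using the result only as a black box to interpret $\cW_{\Lambda_\Sigma^\infty}(\cotangent{T})$ as a generalized Fukaya--Seidel category. Your outline is a plausible reconstruction of how those references proceed (tropicalization of the Hori--Vafa fiber, amoeba-stratified Liouville retraction, Weinstein surgery and gluing, then stop removal plus Seidel's dictionary in the Fano case), but as you acknowledge, the technical heart --- producing a genuine Weinstein-compatible retraction and verifying stability under Weinstein homotopy --- is deferred to exactly the references the paper cites, so the net content is the same. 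Two small imprecisions worth flagging: the Legendrian piece contributed by a cone $\sigma$ is the boundary at infinity of $\torusProj(\sigma^\perp)\times(-\sigma)$, not that conic set itself, since the skeleton at infinity lives in $\contactInfinity{T}$; and ``Fano confines the critical values to a bounded region'' is a heuristic rather than the operative hypothesis --- what Fano (reflexivity of the Newton polytope) actually buys in GPS is a precise comparison showing the partially wrapped and Fukaya--Seidel categories agree, which can fail in general as the paper's own Example~\ref{exampleFS} illustrates. Finally note that the claimed equivalence with $\wSh_{\Lambda_\Sigma}(T)$ should be understood up to the opposite-category subtlety recorded in Theorem~\ref{theorem GPS}.
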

We will not define `Weinstein skeleton' here. The main point for us is that $\cW_{\Lambda_\Sigma^\infty}(\cotangent{T})$ in general can be considered as a generalization of a Fukaya--Seidel category.

Since mirror symmetry predicts an equivalence between the B-model on~$X_\Sigma$ and the mirror Landau--Ginzburg A-model, we can interpret homological mirror symmetry as an equivalence between the derived category of coherent sheaves over $X_\Sigma$ and $\wSh_\Ls(T)$. This is the content of the next subsection.

\subsection{Coherent-constructible correspondence}
In this subsection, we review the result in \cite{FLTZ} and \cite{K2} for the smooth variety case. We now take the field~$k$ to be~$\bC$. Let~$\Lambda_\Sigma$ and~$X_\Sigma$ be as in the previous section. For $\sigma\in \Sigma$, we have the corresponding affine coordinate $i_\sigma\colon U_\sigma\subset X_\Sigma$. Letting $\Qcoh$ denote the unbounded derived category of quasicoherent sheaves, there exists a unique functor
\begin{equation*}
\kappa_{\Sigma}\colon \Qcoh(X_\Sigma)\rightarrow \lSh_\Ls(X_\Sigma)
\end{equation*}
which maps $\Theta'(\sigma):=i_{\sigma*}\cO_{U_\sigma}$ to $\Theta(\sigma):=\torusProj_!\bC_{\mathrm{Int}(\sigma^\vee)}$ where $\mathrm{Int}$ is the interior and $\sigma^\vee$ is the polar dual of $\sigma$.

\begin{theorem}[\cite{K2}]\label{theorem coh-constr}
The restriction of $\kappa_\Sigma$ to $D(X_\Sigma)$ gives an equivalence
\begin{equation*}
D(X_\Sigma)\xrightarrow{\sim} \wSh_\Ls(T).
\end{equation*}
\end{theorem}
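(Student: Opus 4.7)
The plan is to prove the equivalence in three steps: (a) verify that $\kappa_\Sigma$ is well-defined on the generating family $\{\Theta'(\sigma)\}_{\sigma\in\Sigma}$, (b) establish full faithfulness on the subcategory generated by these objects, and (c) prove both sides are generated by the respective families $\{\Theta'(\sigma)\}$ and $\{\Theta(\sigma)\}$.

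First I would check well-definedness: the assignment $\Theta'(\sigma)\mapsto\Theta(\sigma)$ extends uniquely to a colimit-preserving functor provided it respects the structure maps. On the coherent side, a face inclusion $\tau\preceq\sigma$ yields a restriction $i_{\sigma*}\cO_{U_\sigma}\to i_{\tau*}\cO_{U_\tau}$; on the constructible side, the same face inclusion gives $\mathrm{Int}(\sigma^\vee)\subset\mathrm{Int}(\tau^\vee)$ and hence a natural map $\torusProj_!\bC_{\mathrm{Int}(\sigma^\vee)}\to\torusProj_!\bC_{\mathrm{Int}(\tau^\vee)}$. These structure maps match. The core technical step is then a comparison of $\Hom$-spaces. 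On the coherent side, since $X_\Sigma$ is separated each $i_\sigma$ is an affine open immersion, so $Ri_{\sigma*}=i_{\sigma*}$, and
\[
\RHom_{D(X_\Sigma)}\bigl(\Theta'(\sigma),\Theta'(\tau)\bigr) \;\cong\; R\Gamma\bigl(U_{\sigma\cap\tau},\cO\bigr) \;\cong\; \bC\bigl[(\sigma\cap\tau)^\vee\cap M\bigr],
\]
concentrated in degree zero. On the constructible side, a direct microlocal computation, using an explicit wrapping of $\torusProj_!\bC_{\mathrm{Int}(\sigma^\vee)}$ past the conic Lagrangian $\Ls$, produces the same semigroup algebra graded by $M$. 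Matching these identifications along the structure maps gives full faithfulness on the generated subcategory.

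For generation, on the coherent side any object of $D(X_\Sigma)$ admits a \v{C}ech-type resolution by pushforwards from the affine toric cover $\{U_\sigma\}_{\sigma\in\Sigma}$, which expresses it as a finite iterated extension of objects of the form $\Theta'(\sigma)$. On the constructible side, one shows that the $\Theta(\sigma)$ corepresent stalk-like functors at points of the strata $\torusProj(\sigma^\perp)$ after wrapping, so any object $\cE$ with $\SS(\cE)\subset\Ls$ for which $\RHom(\Theta(\sigma),\cE)$ vanishes for every $\sigma$ must itself be zero. This last step is where I expect the main obstacle: it requires a precise microlocal vanishing statement tailored to the FLTZ skeleton, combining compatibility of wrapping with the face stratification of $T$ by the $\torusProj(\sigma^\perp)$ with a careful analysis of how the conic Lagrangian pieces of $\Ls$ project to~$T$. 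Once this generation is in place, combining it with the full faithfulness of step two yields the stated equivalence.
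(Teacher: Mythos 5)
This theorem is cited from \cite{K2} rather than proved in the present paper, so there is no in-text proof to compare against; the correct "paper proof" here is the one in \cite{K2}. What \cite{K2} actually does is quite different in structure from your sketch: rather than a direct generators-and-relations argument on the full fan, it sets $\kappa_\Sigma$ up as an integral transform, proves a descent/gluing statement that lets one build the correspondence out of the affine charts $U_\sigma$ and the associated inclusion data of the fan, and reduces the equivalence to the case of a single cone. The heavy microlocal analysis in \cite{K2} goes into making the gluing compatible on the constructible side (controlling microsupport as one passes between strata), and into the base case, rather than into a one-shot computation of $\Hom(\Theta(\sigma),\Theta(\tau))$ for all pairs and a global generation claim as in your step~(c).

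Concretely, there are two places where your sketch as written does not yet close the argument. First, the wrapped-category Hom computation you invoke in step~(b) is the entire content of the equivariant FLTZ theorem in the Fano/smooth case, and extending it to arbitrary (possibly singular, non-complete) $\Sigma$ is exactly what \cite{K2} is for; a "direct microlocal computation, using an explicit wrapping past $\Ls$" is not available in general because the wrapping geometry is not tractable fan-by-fan, which is why \cite{K2} reduces to affine pieces instead. Second, your generation claim on the constructible side is not quite the right statement: what is known (via Nadler's generation result, which this paper indeed invokes elsewhere) is that $\lSh_{\Ls}(T)$ is generated by microlocal skyscraper sheaves along $\Ls$, with $\wSh_\Ls(T)$ the compact objects therein. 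The objects $\Theta(\sigma)$ are not microlocal skyscrapers, so one still has to show they split-generate the compact part, and "corepresenting stalk-like functors at points of $\torusProj(\sigma^\perp)$" is not an accurate description of what they corepresent. Your outline has the right overall shape (match Hom spaces of generators, prove generation), but the two technical pillars you flag as "the main obstacle" and "a direct microlocal computation" are exactly where the actual proof departs from this route, and without them the argument is a plan rather than a proof.
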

As discussed in the last part of the previous section, this equivalence is an instance of homological mirror symmetry.

\smallskip
According to \cite{K2}, we redefine the equivalence functor as
\begin{equation*}
K_\Sigma:=\kappa_\Sigma(-\otimes \omega_{\Sigma}^{-1})
\end{equation*}
where $\omega_\Sigma$ is the canonical sheaf. Since $\omega_{\Sigma}$ is invertible, we have:
\begin{corollary}[\cite{K2}]
The functor $K_\Sigma$ gives an equivalence
\begin{equation*}
D(X_\Sigma)\xrightarrow{\sim} \wSh_\Ls(T).
\end{equation*}
\end{corollary}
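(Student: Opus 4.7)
The statement is a direct corollary of Theorem~\ref{theorem coh-constr}, obtained by a formal twist. The plan is to decompose $K_\Sigma$ as a composition of two equivalences and conclude by a two-out-of-three argument.

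First I would unpack the definition: by construction,
\begin{equation*}
K_\Sigma \;=\; \kappa_\Sigma \circ \bigl(-\otimes\omega_\Sigma^{-1}\bigr),
\end{equation*}
where the second factor acts on $D(X_\Sigma)$ and the first is the restriction of $\kappa_\Sigma$ used in Theorem~\ref{theorem coh-constr}.

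Next I would observe that since $X_\Sigma$ is smooth (we are in the smooth toric variety setting of this subsection), the canonical sheaf $\omega_\Sigma$ is a line bundle, so $\omega_\Sigma^{-1}$ makes sense as an invertible object of $D(X_\Sigma)$. Tensoring with an invertible sheaf is an auto-equivalence of $D(X_\Sigma)$, with quasi-inverse $-\otimes\omega_\Sigma$. Then by Theorem~\ref{theorem coh-constr}, the restriction $\kappa_\Sigma|_{D(X_\Sigma)}\colon D(X_\Sigma)\to \wSh_\Ls(T)$ is an equivalence. The composition of two equivalences is an equivalence, so $K_\Sigma$ is an equivalence, as claimed.

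There is essentially no obstacle here; the only thing worth checking carefully is that $-\otimes\omega_\Sigma^{-1}$ really does preserve the bounded derived category $D(X_\Sigma)$ of coherent sheaves (which is immediate because $\omega_\Sigma^{-1}$ is a line bundle, so tensoring is exact and preserves coherence and boundedness). Because this is so short, I would present it as a one-line proof that quotes Theorem~\ref{theorem coh-constr} directly rather than giving a standalone argument.
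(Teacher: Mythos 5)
Your proposal is correct and is essentially identical to the paper's argument: the paper simply notes that $\omega_\Sigma$ is invertible, so $-\otimes\omega_\Sigma^{-1}$ is an autoequivalence of $D(X_\Sigma)$, and $K_\Sigma=\kappa_\Sigma\circ(-\otimes\omega_\Sigma^{-1})$ is then an equivalence by Theorem~\ref{theorem coh-constr}. You spell out the same one-line observation in slightly more detail (justifying invertibility via smoothness, which is the standing assumption in this subsection), but the substance and structure of the argument coincide.
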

The reason why we use this modified functor is to get a commutativity with push-forwards on the B-side, as appeared in the proof of Theorem~\ref{theorem mirror}. 

As a corollary of the above theorem and Theorem~\ref{theorem GPS}, we get a homological mirror symmetry between coherent sheaves and a Fukaya category:
\begin{corollary}
\[D(X_{\Sigma})\xrightarrow{\sim}\cW_{\Lsinfty}(\cotangent{T})^{\mathrm{op}}\]
\end{corollary}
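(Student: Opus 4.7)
The statement is just the composition of the two equivalences stated immediately before it, so the plan is essentially a one-step concatenation rather than a substantive new argument. Concretely, I would first invoke the corollary preceding this one, giving an equivalence
\[
K_\Sigma \colon D(X_\Sigma) \xrightarrow{\sim} \wSh_{\Lambda_\Sigma}(T),
\]
which is the coherent-constructible correspondence in the form preferred in \cite{K2}. I would then invoke Theorem \ref{theorem GPS} applied to $Z = T$ and $\Lambda = \Lambda_\Sigma$, which yields
\[
\wSh_{\Lambda_\Sigma}(T)^{\op} \simeq \cW_{\Lambda_\Sigma^\infty}(\cotangent{T}),
\]
or equivalently, passing to opposite categories on both sides,
\[
\wSh_{\Lambda_\Sigma}(T) \simeq \cW_{\Lambda_\Sigma^\infty}(\cotangent{T})^{\op}.
\]

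Composing these two equivalences gives the claimed
\[
D(X_\Sigma) \xrightarrow{\sim} \cW_{\Lambda_\Sigma^\infty}(\cotangent{T})^{\op}.
\]
The only bookkeeping issue worth flagging is making sure the variance conventions line up: the GPS equivalence is contravariant as stated, which is exactly why the $\op$ appears on the symplectic side in the conclusion, and this matches the role of $\op$ in the later use of this corollary (for instance in Section~\ref{section method}, where we record the chain $\cW_{\Lambda^\infty}(\cotangent{T}) \cong \wSh_{\Lambda}(T)^{\op} \cong D(X)^{\op} \cong D(X)$). There is no genuine obstacle here since both input equivalences have already been established; the corollary is simply the combination recorded for later reference.
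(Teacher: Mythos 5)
Your proof is correct and matches the paper's (unstated but evidently intended) argument: the corollary is simply the composition of the coherent-constructible correspondence $K_\Sigma$ with the Ganatra--Pardon--Shende equivalence of Theorem~\ref{theorem GPS}, with the $\op$ tracked correctly. Nothing further is needed.
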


Next we would like to discuss the functoriality. Let $\Sigma'$ be a refinement of~$\Sigma$. We denote the corresponding map by $\pr\colon X_{\Sigma'}\rightarrow X_\Sigma$. Then there exists an inclusion relation $\Lambda_{\Sigma} \subset \Lambda_{\Sigma'}$, which induces an inclusion
\begin{equation}\label{equation big pullback}\mpr^*\colon \lSh_{\Ls}(T)\hookrightarrow \lSh_{\Lambda_{\Sigma'}}(T).\end{equation}

\begin{proposition}[Fang--Liu--Treumann--Zaslow~\cite{FLTZ}]\label{pullback}
\[\kappa_{\Sigma'}\circ \pr^*\simeq \mpr^*\circ \kappa_{\Sigma}\]
\end{proposition}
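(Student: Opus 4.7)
The plan is to verify the natural isomorphism objectwise on a set of generators, exploiting the universal characterization of $\kappa_{\Sigma'}$. Both composites $\kappa_{\Sigma'}\circ \pr^*$ and $\mpr^*\circ \kappa_\Sigma$ are exact and preserve small colimits (by construction of $\kappa_\bullet$ and since $\pr^*$ and $\mpr^*$ have right adjoints), so it suffices to produce a natural equivalence on the distinguished generators $\Theta'(\sigma) = i_{\sigma*}\cO_{U_\sigma}$ for $\sigma \in \Sigma$. For the right-hand side, $\mpr^*\kappa_\Sigma \Theta'(\sigma) \simeq \Theta(\sigma)$ is immediate: $\mpr^*$ of~\eqref{equation big pullback} is the inclusion of the full subcategory of sheaves with microsupport in $\Lambda_\Sigma$, and $\Theta(\sigma) = \torusProj_!\bC_{\mathrm{Int}(\sigma^\vee)}$ visibly has microsupport in $\Lambda_\Sigma \subset \Lambda_{\Sigma'}$.

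For the left-hand side, I would compute $\pr^* \Theta'(\sigma)$ via flat base change along the Cartesian square whose bottom edge is $i_\sigma$ and whose top edge is the open immersion $j'\colon V \hookrightarrow X_{\Sigma'}$ with $V := \pr^{-1}(U_\sigma)$. Since $i_\sigma$ is flat, base change gives $\pr^* \Theta'(\sigma) \simeq j'_*\cO_V$. The open $V$ is the toric subvariety of $X_{\Sigma'}$ associated to the subfan $\Sigma'_\sigma := \{\tau \in \Sigma' : \tau \subseteq \sigma\}$, which refines the single cone $\sigma$. I then resolve $j'_*\cO_V$ by the \v{C}ech complex associated to the affine open cover $V = \bigcup_\tau U_\tau$ indexed by the maximal cones $\tau \in \Sigma'_\sigma$. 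Each term of this complex is a finite direct sum of objects $\Theta'(\tau')$ for $\tau' \in \Sigma'$ a common face of the selected maximal cones, and applying $\kappa_{\Sigma'}$ term by term yields the analogous \v{C}ech-type total complex assembled from the $\Theta(\tau')$ in $\lSh_{\Lambda_{\Sigma'}}(T)$.

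It remains to identify this total complex with $\Theta(\sigma)$. Since $\torusProj_!$ is exact, the problem reduces to a statement in constructible sheaves on $M_\bR$: namely, that $\bC_{\mathrm{Int}(\sigma^\vee)}$ is computed by the corresponding \v{C}ech total complex built from the open subsets $\mathrm{Int}(\tau'^\vee)$. This in turn rests on the combinatorial identity $\mathrm{Int}(\sigma^\vee) = \bigcap_\tau \mathrm{Int}(\tau^\vee)$, valid for any refinement of the cone $\sigma$, together with the standard Mayer--Vietoris presentation of the constant sheaf on an intersection of opens. I expect the main obstacle to lie exactly in this matching: one must verify not merely that the two \v{C}ech-style totalizations are isomorphic for each fixed $\sigma$, but that these identifications are natural in $\sigma$ and compatible with refinement, so that the objectwise equivalence genuinely upgrades to a natural isomorphism of functors. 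Once naturality on generators is established, exactness and preservation of colimits on both sides promote it to an isomorphism on all of $\Qcoh(X_\Sigma)$, which yields the claim.
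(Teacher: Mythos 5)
The paper gives no proof of this proposition, attributing it to Fang--Liu--Treumann--Zaslow, so there is no internal argument to compare against; I assess your outline on its merits. The structural plan is sound: reduce to the generators $\Theta'(\sigma)$, use flat base change to get $\pr^*\Theta'(\sigma)\simeq j'_*\cO_V$, resolve $j'_*\cO_V$ by the \v{C}ech complex of the affine cover $V=\bigcup_\tau U_\tau$, and apply $\kappa_{\Sigma'}$ term by term to land on a complex built from the $\Theta(\tau_S)$ with $\tau_S=\bigcap_{\tau\in S}\tau$. The gap is in the final identification. The $S$-th term of your complex carries $\bC_{\mathrm{Int}(\tau_S^\vee)}$, and this is \emph{not} indexed like the nerve of the cover of $M_\bR$ by the opens $\mathrm{Int}(\tau^\vee)$: in general $\mathrm{Int}(\tau_S^\vee)$ is strictly larger than $\bigcup_{\tau\in S}\mathrm{Int}(\tau^\vee)$. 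For instance, take $\sigma=\Cone(e_1,e_2,e_3)$ with the star subdivision at $e_1+e_2+e_3$, so $\tau_1=\Cone(e_1,e_2,e_1+e_2+e_3)$, $\tau_2=\Cone(e_2,e_3,e_1+e_2+e_3)$. Then $m=(-1,3,-1)$ lies in $\mathrm{Int}((\tau_1\cap\tau_2)^\vee)=\{b>0,\ a+b+c>0\}$ but in neither $\mathrm{Int}(\tau_1^\vee)=\{a>0,\ b>0,\ a+b+c>0\}$ nor $\mathrm{Int}(\tau_2^\vee)=\{b>0,\ c>0,\ a+b+c>0\}$. Consequently the ``standard Mayer--Vietoris presentation of the constant sheaf on an intersection of opens,'' together with $\mathrm{Int}(\sigma^\vee)=\bigcap_\tau\mathrm{Int}(\tau^\vee)$, is not what is required. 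What the stalk computation actually demands is that for every $m\notin\mathrm{Int}(\sigma^\vee)$ the simplicial complex $L_m:=\{S : \tau_S\cap\{\,n : \la m,n\ra\le 0\}\neq\{0\}\}$ is acyclic with $H^0=\bC$, so that the relative cochain complex of the full simplex modulo $L_m$ vanishes. This is true --- one route is the nerve theorem applied to the closed convex cover of the cone $\sigma\cap\{\la m,\cdot\ra\le 0\}$ by its intersections with the maximal cones of the refinement, all of which are convex, hence contractible after projectivizing --- but it is a genuine toric-combinatorial lemma of the kind FLTZ establish, not the formal Mayer--Vietoris consequence you describe.

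The second issue, which you flag yourself, is also real: a \v{C}ech resolution is a choice, so your objectwise identifications do not by themselves assemble into a morphism of functors, and ``exactness plus preservation of colimits'' promotes a natural transformation you already have but does not manufacture one from a family of unnatural pointwise isomorphisms. FLTZ resolve this by defining $\kappa$ on an explicit combinatorial dg-model of $\Qcoh(X_\Sigma)$ (where $\Theta'(\sigma)$ and the morphisms between them span), so that compatibility with $\pr^*$ can be checked on morphisms as well as on objects; your outline still needs such a mechanism to upgrade the objectwise agreement to the asserted isomorphism of functors.
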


For a general inclusion of Lagrangians $\Lambda\subset \Lambda'$, the inclusion \[\lSh_{\Lambda}(T)\hookrightarrow \lSh_{\Lambda'}(T)\] induces $\cSh_{\Lambda}(T)\hookrightarrow \cSh_{\Lambda'}(T)$ but does not induce $\wSh_{\Lambda}(T)\hookrightarrow \wSh_{\Lambda'}(T)$. However, in our case, we have:

\begin{proposition}\label{pullback2} If $X_\Sigma$ is smooth, the functor~\eqref{equation big pullback} above induces a functor
\[\mpr^*\colon \wSh_{\Ls}(T)\hookrightarrow \wSh_{\Lambda_{\Sigma'}}(T).\]\end{proposition}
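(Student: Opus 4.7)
The wrapped category $\wSh_\Lambda(T)$ is tautologically the subcategory of compact objects of $\lSh_\Lambda(T)$: the defining condition that $\Hom(\cE,-)$ commutes with direct sums is precisely compactness. Since the functor~\eqref{equation big pullback} is an inclusion of a full subcategory, full faithfulness on wrapped subcategories is automatic, and the proposition reduces to verifying that $\mpr^*$ preserves compact objects.

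I would transport this question to the coherent side. By Theorem~\ref{theorem coh-constr}, the correspondence $\kappa_\Sigma$ restricts to an equivalence $D(X_\Sigma) \xrightarrow{\sim} \wSh_\Ls(T)$, and similarly for $\Sigma'$. Proposition~\ref{pullback} gives a commuting square $\mpr^*\circ \kappa_\Sigma \simeq \kappa_{\Sigma'}\circ \pr^*$, so for any $\cE \in \wSh_\Ls(T)$ expressed as $\kappa_\Sigma(\cF)$ with $\cF\in D(X_\Sigma)$, we obtain $\mpr^*\cE \simeq \kappa_{\Sigma'}(\pr^*\cF)$. This lies in $\wSh_{\Lambda_{\Sigma'}}(T)$ provided $\pr^*\cF$ lies in $D(X_{\Sigma'})$; thus the claim reduces to showing that algebraic pullback along $\pr$ preserves bounded-coherent complexes.

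This last step uses smoothness essentially: since $X_\Sigma$ is smooth, every coherent sheaf on $X_\Sigma$ admits a finite locally free resolution (Tor-dimension bounded by $\dim X_\Sigma$), so the derived pullback $L\pr^* = \pr^*$ is bounded and lands in $D(X_{\Sigma'})$ even when $X_{\Sigma'}$ is singular. The main obstacle I expect is bookkeeping rather than a new idea: one must be certain that, under $\kappa_\Sigma$, the compact objects of $\lSh_\Ls(T)$ correspond precisely to $D(X_\Sigma)\subset \Qcoh(X_\Sigma)$, so that compactness of $\mpr^*\cE$ in the larger category $\lSh_{\Lambda_{\Sigma'}}(T)$ is correctly detected by the coherent image $\pr^*\cF$ and is not weakened by the presence of extra direct sums coming from $\Lambda_{\Sigma'}\setminus \Ls$. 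This identification is essentially the content of Theorem~\ref{theorem coh-constr} from~\cite{K2}, and once it is in hand the argument is formal, with the geometric input confined to finite Tor-dimension on the smooth side.
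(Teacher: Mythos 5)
Your proof proceeds exactly as the paper's: apply Proposition~\ref{pullback} together with Theorem~\ref{theorem coh-constr} to transport the statement to the coherent side, where it becomes the assertion that $\pr^*$ sends $D(X_\Sigma)$ into $D(X_{\Sigma'})$, and this holds because smoothness of $X_\Sigma$ gives finite Tor-dimension. The only additions in your write-up — spelling out the finite-locally-free-resolution reason for bounded Tor-dimension, and flagging the bookkeeping point that compactness on the wrapped side must match $D(X_\Sigma)\subset\Qcoh(X_\Sigma)$ under $\kappa_\Sigma$ — are correct and merely elaborate what the paper states tersely as ``which is true under the assumption.''
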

\begin{proof}
By Proposition~\ref{pullback} and Theorem~\ref{theorem coh-constr}, this is equivalent to saying that the pullback $\pr^* \colon \Qcoh(X_\Sigma ) \rightarrow \Qcoh(X_{\Sigma'})$ restricts to a functor between bounded derived categories, which is true under the assumption.
\end{proof}

We also note the following for later use.

\begin{proposition}\label{proposition left adjoint}
For a general inclusion of Lagrangians $\Lambda\subset \Lambda'$, a left adjoint $\iota^{\Ladj}$ of the natural embedding $\iota: \lSh_{\Lambda}(T)\hookrightarrow \lSh_{\Lambda'}(T)$ restricts to an essentially surjective functor \[\iota^w: \wSh_{\Lambda'}(T)\rightarrow \wSh_{\Lambda}(T).\] 
\end{proposition}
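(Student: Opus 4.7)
The plan is to verify two things: (a) that $\iota^{\Ladj}$ sends $\wSh_{\Lambda'}(T)$ into $\wSh_\Lambda(T)$, so that a restricted functor $\iota^w$ exists; and (b) that $\iota^w$ is essentially surjective. Part (a) will be a formal consequence of the adjunction, whereas part (b) will use compact generation of $\lSh_{\Lambda'}(T)$ by $\wSh_{\Lambda'}(T)$ together with a retract / idempotent-lifting argument.

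For (a), I would first observe that the full subcategory $\lSh_\Lambda(T) \subset \lSh_{\Lambda'}(T)$ is closed under arbitrary direct sums, since the microsupport of a direct sum is contained in the union of the summands' microsupports. Hence $\iota$ preserves direct sums. The adjunction isomorphism
\[
\Hom_{\lSh_\Lambda(T)}(\iota^{\Ladj} F, -) \;\simeq\; \Hom_{\lSh_{\Lambda'}(T)}(F, \iota(-))
\]
then shows, for $F \in \wSh_{\Lambda'}(T)$, that the left-hand side commutes with direct sums, so $\iota^{\Ladj} F \in \wSh_\Lambda(T)$.

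For (b), given $G \in \wSh_\Lambda(T)$, full faithfulness of $\iota$ yields $\iota^{\Ladj}\iota G \simeq G$, and I would write $\iota G \simeq \operatorname{colim}_i H_i$ as a filtered colimit of objects $H_i \in \wSh_{\Lambda'}(T)$, using compact generation of $\lSh_{\Lambda'}(T)$. Since $\iota^{\Ladj}$ is a left adjoint, it preserves colimits, giving $G \simeq \operatorname{colim}_i \iota^{\Ladj} H_i$. Compactness of $G$ implies that $\id_G$ factors through some $\iota^{\Ladj} H_{i_0}$, exhibiting $G$ as a retract. Lifting the corresponding idempotent to $\wSh_{\Lambda'}(T)$, which is possible by Karoubi completeness in the dg/$\infty$-categorical enhancement of \cite{GPS}, produces $H \in \wSh_{\Lambda'}(T)$ with $\iota^{\Ladj} H \simeq G$. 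The hard part will be precisely this last idempotent-lifting step: the retract statement is immediate, but lifting the idempotent on $\iota^{\Ladj} H_{i_0}$ to an endomorphism of $H_{i_0}$ is not formal from the adjunction alone, and relies on the enhanced categorical framework in which $\wSh_{\Lambda'}(T)$ is idempotent complete.
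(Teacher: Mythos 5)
Your part~(a) is fine and is the expected formal argument: $\lSh_{\Lambda}(T)$ is closed under arbitrary direct sums inside $\lSh_{\Lambda'}(T)$ (because microsupports of direct sums stay inside the closed set $\Lambda$), so $\iota$ preserves coproducts, and then the adjunction isomorphism immediately transports compactness of $F$ in $\lSh_{\Lambda'}(T)$ to compactness of $\iota^{\Ladj}F$ in $\lSh_{\Lambda}(T)$.

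Part~(b), however, has a genuine gap in the final step, and your own caveat points at it without resolving it. Karoubi completeness of $\wSh_{\Lambda'}(T)$ lets you \emph{split} an idempotent that already lives on an object of $\wSh_{\Lambda'}(T)$; it does \emph{not} let you \emph{lift} the idempotent $e = fg$ on $\iota^{\Ladj}H_{i_0} \in \wSh_\Lambda(T)$ to an endomorphism of $H_{i_0} \in \wSh_{\Lambda'}(T)$. Those are different operations, and the latter is exactly what you would need to produce $H$ with $\iota^{\Ladj}H \simeq G$. As written, your retract argument proves only that every $G \in \wSh_\Lambda(T)$ is a \emph{direct summand} of something in the essential image of $\iota^w$. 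This is precisely what Thomason--Neeman localization gives (the compacts of a Bousfield quotient are the idempotent completion of the image of the compacts), and it is weaker than essential surjectivity on the nose. To close the gap you would need to either show that the essential image of $\iota^w$ is already idempotent-complete, or argue via explicit generators: Nadler's result (cited in the paper's proof) identifies generators of $\wSh_\Lambda(T)$ as microlocal skyscraper sheaves, and the key point is that a microlocal skyscraper over a point of $\Lambda$ in $\lSh_\Lambda$ is itself $\iota^{\Ladj}$ applied to the corresponding microlocal skyscraper in $\wSh_{\Lambda'}(T)$ over the same point (this is used explicitly later in the paper, in the A-side computation of Section~6). That gives the generators of $\wSh_\Lambda(T)$ in the image; one still has to argue that the image is closed under cones, which requires some fullness/lifting observation along $\iota^{\Ladj}$. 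The paper itself offers no details and simply cites the end of Section~3.6 of Nadler, so you should consult that source rather than trying to patch the idempotent-lifting step by appealing to Karoubi completeness, which does not do the job.
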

\begin{proof}This is by general nonsense, see Nadler~\cite[end of Section~3.6]{N}.
\end{proof}

\subsection{Toric stacks}\label{section toric stacks}
Here we recall a definition of toric stacks, following Gerashenko--Satriano~\cite{GerSat}. Let $L$ and $N$ be free abelian groups and \[f\colon L\rightarrow N\] be a morphism. We can associate a morphism between algebraic groups \[f\otimes_\bZ\bC^* \colon L\otimes_\bZ\bC^*\rightarrow N\otimes_\bZ\bC^*.\] Let $\Sigma$ be a fan in $L$ and $X_\Sigma$ be the associated toric variety. Then the toric stack associated to the data $(\Sigma,f)$ is defined by the quotient stack
\begin{equation*}
[X_\Sigma/\ker(f\otimes_\bZ\bC^*)]
\end{equation*}
where $\ker(f\otimes_\bZ\bC^*)$ acts on $X_\Sigma$ via the action of $L\otimes_\bZ\bC^*$ on $X_\Sigma$. 

\section{Schober constructions}
\label{section schober constructions}

We give the required description of the geometry of our examples, before constructing schobers on the $A$-side and $B$-side of mirror symmetry.

\subsection{Surface example}
\label{section surface geometry}
Consider the $A_1$ quotient singularity $ X_0 = \mathbb{C}^2/ \mathbb{Z}_2 $ where $\mathbb{Z}_2 = \{ \pm 1 \}.$ This has resolutions a Deligne--Mumford quotient stack, and a minimal resolution, which we denote as follows.
\[ X_- = [ \mathbb{C}^2/ \mathbb{Z}_2 ] \qquad X_+ = \widetilde{\mathbb{C}^2/ \mathbb{Z}_2} \]
The minimal resolution may be realised as
the total space of a line bundle.
\[X_+ =\operatorname{Tot} \cO_{\mathbb{P}^1}(-2)\]
The fibre product of $X_\pm$ over $X_0$ is a further Deligne--Mumford stack, as follows.
\[X_B = [\operatorname{Tot} \cO_{\mathbb{P}^1}(-1)/ \mathbb{Z}_2].\]
Here $\mathbb{Z}_2$ acts as $\pm 1$ on the fibres of the bundle. The associated morphisms \[\pr_\pm\colon X_B \to X_\pm\] are described as follows.

\begin{itemize}
\item The map $\pr_-$ is the blowup of $[ 0 / \mathbb{Z}_2 ] \subset X_-$, in other words the blowup of $0\in \mathbb{C}^2$, noting that this is equivariant with respect to the \mbox{$\mathbb{Z}_2$-actions}.
\item The map $\pr_+$ is given by the root stack construction along $\mathbb{P}^1$~(for a general discussion, see~\cite{FMN}), namely a family version of the following morphism.
\[ [\mathbb{C}/\mathbb{Z}_2 ] \to \mathbb{C} \colon z \mapsto z^2 \]
\end{itemize}

\subsubsection*{Toric description} Let $N$ be a rank~2 lattice with basis $e_1$, $e_2$, and let $M$ be its dual. The faces of the cone~$\Cone(e_1, e_1+2e_2)\subset N_\bR$ give a fan~$\Sigma_0$ representing the singularity~$X_0$. Then the small resolution $X_+$ is represented by a refinement~$\Sigma_+$ of~$\Sigma_0$ given by the faces of the following cones.
\[\Cone(e_1, e_1+e_2),\,\Cone(e_1+e_2, e_1+2e_2) \subset N_\bR \]

\def\gridColor{black}
\def\gridSize{0.5pt}
\[
\begin{tikzpicture}[xscale=1,scale=1]
\draw[fill,color=\fillColor] (0,0) -- (1,0) -- (1,2) -- cycle;
\draw[fill,color=\gridColor] (0,0) circle (\gridSize);
\foreach \y in {0,...,2} { \draw[fill,color=\gridColor] (1,\y) circle (\gridSize); }
\draw[semithick,-stealth] (0,0) -- (1,0);
\draw[semithick,-stealth] (0,0) -- (1,2);
\node at (0.2,1.3) {\small $\Sigma_0$};

\pgftransformxshift{4cm}
\draw[fill,color=\fillColor] (0,0) -- (1,0) -- (1,2) -- cycle;
\draw[fill,color=\gridColor] (0,0) circle (\gridSize);
\foreach \y in {0,...,2} { \draw[fill,color=\gridColor] (1,\y) circle (\gridSize); }
\draw[semithick,-stealth] (0,0) -- (1,0);
\draw[semithick,-stealth] (0,0) -- (1,1);
\draw[semithick,-stealth] (0,0) -- (1,2);
\node at (0.2,1.3) {\small $\Sigma_+$};
\end{tikzpicture}
\]

To obtain toric data for the stack $X_-$, in the sense of Section~\ref{section toric stacks}, take a further rank~$2$ lattice $L$ with basis $g_1$, $g_2$ and consider the standard fan representing~$\bC^2$, generated by faces of $\Cone(g_1, g_2)\subset L_\bR$, and denote it~$\Sigma_-$. The lattice map
\begin{align*}
f\colon & L\rightarrow N \\
& g_1 \mapsto e_1 \\
& g_2 \mapsto e_1+2e_2
\end{align*}
then induces a map between the corresponding algebraic tori
\[
 T_L \rightarrow T_N\colon
 (a,b) \mapsto (ab, b^2).
\]
The kernel of this map is $\bZ_2=\lc \pm 1 \rc$. It follows that the toric stack corresponding to the data of 
$\Sigma_L$ and $f$ is the toric stack $[\bC^2/\bZ_2]=X_-$, the stacky resolution of the $A_1$-singularity, see Figure~\ref{figure toric for Xpm}. The variety $X_+$ is realized as a toric stack by taking the data $\Sigma_+$ and~$\Idmatrix_N$.
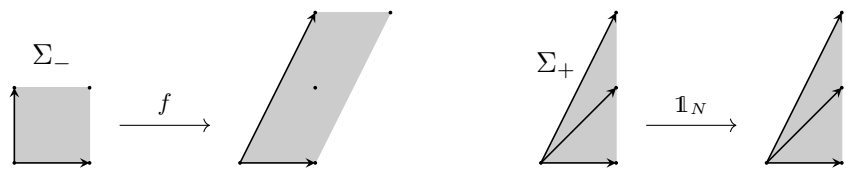
\begin{figure}[h]
\begin{tikzpicture}[xscale=1,scale=1]
\draw[fill,color=\fillColor] (0,0) -- (1,0) -- (1,1) -- (0,1) -- cycle;
\foreach \x in {0,...,1} { \foreach \y in {0,...,1} { \draw[fill,color=\gridColor] (\x,\y) circle (\gridSize); } }
\draw[semithick,-stealth] (0,0) -- (1,0);
\draw[semithick,-stealth] (0,0) -- (0,1);
\node at (0.5,1.4) {\small $\Sigma_-$};

\draw[->] (1.4,0.5) to node[above] {\scriptsize $f$} (2.6,0.5);

\pgftransformxshift{3cm}
\draw[fill,color=\fillColor] (0,0) -- (1,0) -- (2,2) -- (1,2) -- cycle;
\draw[fill,color=\gridColor] (0,0) circle (\gridSize);
\foreach \y in {0,...,2} { \draw[fill,color=\gridColor] (1,\y) circle (\gridSize); }
\draw[fill,color=\gridColor] (2,2) circle (\gridSize);
\draw[semithick,-stealth] (0,0) -- (1,0);
\draw[semithick,-stealth] (0,0) -- (1,2);

\pgftransformxshift{4cm}
\draw[fill,color=\fillColor] (0,0) -- (1,0) -- (1,2) -- cycle;
\draw[fill,color=\gridColor] (0,0) circle (\gridSize);
\foreach \y in {0,...,2} { \draw[fill,color=\gridColor] (1,\y) circle (\gridSize); }
\draw[semithick,-stealth] (0,0) -- (1,0);
\draw[semithick,-stealth] (0,0) -- (1,1);
\draw[semithick,-stealth] (0,0) -- (1,2);
\node at (0.2,1.3) {\small $\Sigma_+$};

\draw[->] (1.4,0.5) to node[above] {\scriptsize $\Idmatrix_N$} (2.6,0.5);

\pgftransformxshift{3cm}
\draw[fill,color=\fillColor] (0,0) -- (1,0) -- (1,2) -- cycle;
\draw[fill,color=\gridColor] (0,0) circle (\gridSize);
\foreach \y in {0,...,2} { \draw[fill,color=\gridColor] (1,\y) circle (\gridSize); }
\draw[semithick,-stealth] (0,0) -- (1,0);
\draw[semithick,-stealth] (0,0) -- (1,1);
\draw[semithick,-stealth] (0,0) -- (1,2);

\end{tikzpicture}
\caption{Toric data for $X_\pm$}
\label{figure toric for Xpm}
\end{figure}

For the toric data corresponding to the fibre product $X_B$, take the fan~$\Sigma_{B}$ given by faces of the following cones
\[\Cone(e_1, e_1+e_2),\,\Cone(e_1+e_2, e_2) \subset L_\bR. \]
Note that this fan refines $\Sigma_-$, and corresponds to $\operatorname{Tot} \cO_{\mathbb{P}^1}(-1)$. Then we take data $\Sigma_B$ and~$f$ to obtain~$X_B$, see Figure~\ref{figure toric for XB}. Note this is not a toric Deligne--Mumford stack in the sense of Borisov--Chen--Smith~\cite{BCS}.
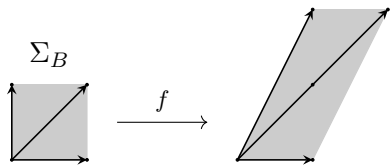
\begin{figure}[h]
\begin{tikzpicture}[xscale=1,scale=1]
\draw[fill,color=\fillColor] (0,0) -- (1,0) -- (1,1) -- (0,1) -- cycle;
\foreach \x in {0,...,1} { \foreach \y in {0,...,1} { \draw[fill,color=\gridColor] (\x,\y) circle (\gridSize); } }
\draw[semithick,-stealth] (0,0) -- (1,0);
\draw[semithick,-stealth] (0,0) -- (1,1);
\draw[semithick,-stealth] (0,0) -- (0,1);
\node at (0.5,1.4) {\small $\Sigma_B$};

\draw[->] (1.4,0.5) to node[above] {\scriptsize $f$} (2.6,0.5);

\pgftransformxshift{3cm}
\draw[fill,color=\fillColor] (0,0) -- (1,0) -- (2,2) -- (1,2) -- cycle;
\draw[fill,color=\gridColor] (0,0) circle (\gridSize);
\foreach \y in {0,...,2} { \draw[fill,color=\gridColor] (1,\y) circle (\gridSize); }
\draw[fill,color=\gridColor] (2,2) circle (\gridSize);
\draw[semithick,-stealth] (0,0) -- (1,0);
\draw[semithick,-stealth] (0,0) -- (2,2);
\draw[semithick,-stealth] (0,0) -- (1,2);
\end{tikzpicture}
\caption{Toric data for $X_B$}
\label{figure toric for XB}
\end{figure}

Now the projections $\pr_\pm$ from $X_B$ are induced by appropriate commutative squares of lattices, as shown.
\[
\begin{tikzpicture}[xscale=1,scale=1.5,baseline={([yshift=-.5ex]current bounding box.center)}]

\node (A) at (0,0) {$ X_B $};
\node (Aminus) at (0,-1) {$ X_- $};
\node (Aplus) at (0,1) {$ X_+ $};

\draw[->] (A) to node[left] {\scriptsize $\pr_-$}  (Aminus);
\draw[->] (A) to node[left] {\scriptsize $\pr_+$}  (Aplus);

\pgftransformxshift{2cm}

\node (C) at (0,0) {$ L $};
\node (Cminus) at (0,-1) {$ L $};
\node (Cplus) at (0,1) {$ N $};

\draw[-,transform canvas={xshift=+\equalsSep}] (C) to (Cminus);
\draw[-,transform canvas={xshift=-\equalsSep}] (C) to (Cminus);
\draw[->] (C) to node[left] {\scriptsize $f$}  (Cplus);

\pgftransformxshift{1cm}

\node (D) at (0,0) {$ N $};
\node (Dminus) at (0,-1) {$ N $};
\node (Dplus) at (0,1) {$ N $};

\draw[-,transform canvas={xshift=+\equalsSep}] (D) to (Dminus);
\draw[-,transform canvas={xshift=-\equalsSep}] (D) to (Dminus);
\draw[-,transform canvas={xshift=+\equalsSep}] (D) to (Dplus);
\draw[-,transform canvas={xshift=-\equalsSep}] (D) to (Dplus);

\draw[->] (Cminus) to node[above] {\scriptsize $f$}  (Dminus);
\draw[->] (C) to node[above] {\scriptsize $f$}  (D);
\draw[->] (Cplus) to node[above] {\scriptsize $\Idmatrix_N$}  (Dplus);

\end{tikzpicture}
\]

\subsection{Threefold example} 
\label{section threefold geometry}

We recall the geometry of the Atiyah flop local model. Namely, take $X_0$ to be the conifold singularity $(xy-zw=0)$ in $\mathbb{C}^4$. This has crepant resolutions $X_\pm$ with exceptional curves $E_\pm \cong \mathbb{P}^1$ such that
\[X_\pm \cong \operatorname{Tot} \cO_{E_\pm}(-1)^{\oplus 2}, \]
related by an Atiyah flop as follows.
 \[
\begin{tikzpicture}[xscale=0.9,scale=1.25,baseline={([yshift=-.5ex]current bounding box.center)}]

\node (Bplus) at (1,0) {$ X_+ $};
\node (Cminus) at (-1,0) {$ X_-\! $};
\node (Cplus) at (0,-1) {$ X_0 $};

\draw[<->,dashed] (Cminus) to (Bplus);
\draw[->] (Cminus) to (Cplus);
\draw[<-] (Cplus) to (Bplus);

\end{tikzpicture}
\]
Let $X_B$ be the fibre product of this diagram. This is isomorphic to the blowup of $X_\pm$ along $E_\pm$. Write $E$ for the common exceptional divisor $\mathbb{P}^1 \times \mathbb{P}^1$.

\subsubsection*{Toric data} 
Let $N$ be a rank $3$ lattice with basis $e_i$, and dual $M$. The set of faces of the cone $\Cone(e_1, e_2, e_1+e_3, e_2+e_3)\subset N_\bR$ gives a fan $\Sigma_0$ representing the conifold. The small resolutions $X_\pm$ come from
\begin{equation*}
\begin{split}
\Sigma_+&:=\Sigma_0\cup \{\Cone(e_2, e_1+e_3)\}\\
\Sigma_-&:=\Sigma_0\cup \{\Cone(e_1, e_2+e_3)\}
\end{split}
\end{equation*}
and the blow-up $X_B$ from
\begin{equation*}
\Sigma_B:=\Sigma_+\cup \Sigma_-\cup \{\bR_{\geq 0}\cdot (e_1+e_2+e_3)\}.
\end{equation*}
This data is sketched in Figure~\ref{figure toric atiyah}.
Note that the fan $\Sigma_B$ refines $\Sigma_\pm$ giving the morphisms \[\pr_\pm\colon X_B \rightarrow X_\pm.\]

\begin{figure}
\begin{tikzpicture}[scale=2]

\coordinate (O) at (0,0);
\coordinate (P1) at (90+20:2.2);
\coordinate (P2) at (90+15:1.5);
\coordinate (P3) at (90-7:2.2);
\coordinate (P4) at (90-25:2);
\coordinate (P3a) at (intersection cs: first line={(O) -- (P3)}, second line={(P1) -- (P4)});
\coordinate (P3b) at (intersection cs: first line={(O) -- (P3)}, second line={(P2) -- (P4)});

\coordinate (I) at (intersection cs: first line={(P4) -- (P1)}, second line={(P3) -- (P2)});
\coordinate (Ia) at (intersection cs: first line={(O) -- (I)}, second line={(P2) -- (P4)});

\fill[black!3] (O) -- (P3) -- (P1) -- cycle;
\fill[black!2] (O) -- (P3) -- (P4) -- cycle;

\fill[black!30] (O) -- (I) -- (P3) -- cycle;

\draw[semithick] (O) -- (P3);

\fill[black!15] (O) -- (I) -- (P1) -- cycle;
\fill[black!25] (O) -- (I) -- (P2) -- cycle;
\fill[black!5] (O) -- (I) -- (P4) -- cycle;

\draw[semithick,black!15] (O) -- (P3a);

\draw[semithick] (O) -- (I);

\fill[black!45] (O) -- (P2) -- (P1) -- cycle;
\fill[black!35] (O) -- (P2) -- (P4) -- cycle;

\draw[semithick,black!45] (O) -- (Ia);
\draw[semithick,black!40] (O) -- (P3b);

\draw[semithick] (O) -- (P1);
\draw[semithick] (O) -- (P2);
\draw[semithick] (O) -- (P4);

\draw[fill] (P1) circle (0.4pt);
\draw[fill] (P2) circle (0.4pt);
\draw[fill] (P3) circle (0.4pt);
\draw[fill] (P4) circle (0.4pt);

\node[anchor=south east] (C1) at (P1) {\scriptsize $e_1$};
\node[anchor=north east,transform canvas={xshift=-8pt}] (C2) at (P2) {\scriptsize $e_2$};
\node[anchor=south west] (C3) at (P3) {\scriptsize $e_1+e_3$};
\node[anchor=west] (C4) at (P4) {\scriptsize $e_2+e_3$};

\node[anchor=west] (F) at (O) {\small $\hspace{0.2cm} \Sigma_B$};

\end{tikzpicture}
\caption{Threefold toric fan in $N_{\mathbb{R}}$}
\label{figure toric atiyah}
\end{figure}
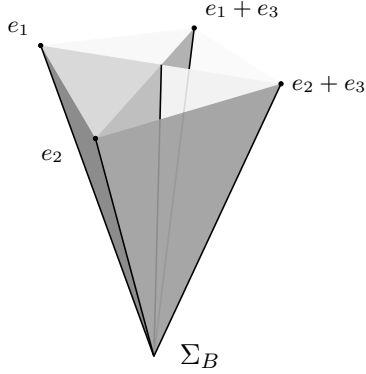

\subsection{B-side constructions}\label{section Bside schober} We construct schobers involving the categories $D(X_\pm)$ by an appropriate analysis of semiorthogonal decompositions. For the $3$-fold case which we study first, our schober should be equivalent to the one constructed, in a more general setting, by Bodzenta--Bondal in~\cite{BB}. We here use a different method to theirs, which also applies to the stacky surface case.

\subsubsection*{Threefold case} First note we have $\pr_\pm^* \colon D(X_\pm) \to D(X_B)$ with left adjoints $\pr_{\pm !}$, where we take $p_! = p_* (\omega_p[\operatorname{dim} p] \otimes \placeholder)$. By Bondal--Orlov~\cite{BO}, $\pr_{\pm *} \pr_\mp^*$ are equivalences, so by taking left adjoints $\pr_{\mp !} \pr_\pm^*$ are equivalences.
\begin{definition}Let $\window$ be full triangulated subcategory of $D(X_B)$ generated by the images of $\pr_+^*$ and $\pr_-^*$.
\end{definition}
Then we have $\pr_\pm^* \colon D(X_\pm) \to \window$ with left adjoints $\pr_{\pm !}$ obtained by restriction, because $\window$ is a full subcategory of $D(X_B)$, and we deduce that the compositions $ D(X_-) \leftrightarrow D(X_+) $ in the following diagram are equivalences~$\pr_{\mp !} \pr_\pm^*$.
\begin{equation}\label{equation B side schober}
 \begin{tikzpicture}[scale=2,baseline={([yshift=-.5ex]current bounding box.center)}]
\node (Bminus) at (-1,1) {$ D(X_-) $};
\node (Bzero) at (0,1) {$ \window $};
\node (Bplus) at (+1,1) {$ D(X_+) $};
\draw[right hook->,transform canvas={yshift=+\arrowGap}] (Bminus) to node[above] {\scriptsize $\pr_-^*$} (Bzero);
\draw[left hook->,transform canvas={yshift=+\arrowGap}] (Bplus) to node[above] {\scriptsize $\pr_+^*$} (Bzero);
\draw[<-,transform canvas={yshift=-\arrowGap}] (Bminus) to node[below] {\scriptsize $\pr_{-!}$} (Bzero);
\draw[<-,transform canvas={yshift=-\arrowGap}] (Bplus) to node[below] {\scriptsize $\pr_{+!}$} (Bzero);
\end{tikzpicture}
\end{equation}

To show that this gives (the opposite of) a spherical pair, and for use in our equivalence proof in Section~\ref{section schober equiv}, we construct semi-orthogonal decompositions of~$\window$. Set notation for the blowup of $X_\pm$ in $E_\pm$ as follows.
\begin{equation}\label{equation blowup}
 \begin{tikzpicture}[xscale=1.25,scale=0.75,baseline={([yshift=-.5ex]current bounding box.center)}]

\node (Bminus) at (-1,1) {$ X_\pm $};
\node (Bplus) at (+1,1) {$ X_B $};

\node (Cminus) at (-1,-1) {$ E_\pm $};
\node (Cplus) at (+1,-1) {$ E $};

\draw[<-] (Bminus) to node[above] {\scriptsize $\pr_\pm$}  (Bplus);
\draw[<-] (Cminus) to node[below] {\scriptsize $\excpr_\pm$}  (Cplus);

\draw[right hook->] (Cminus) to node[left] {\scriptsize $j_\pm$}  (Bminus);
\draw[right hook->] (Cplus) to node[right] {\scriptsize $i$}  (Bplus);

\end{tikzpicture}
\end{equation}

\begin{proposition}\label{proposition flober sod} The data of \eqref{equation B side schober} yields the opposite of a spherical pair. In particular, we have semi-orthogonal decompositions
\begin{equation*}
\window = \la \pr_{\pm}^* D(X_{\pm}), i_* \excpr_{\pm}^* \cO_{E_{\pm}} (-1) \ra.
\end{equation*}
\end{proposition}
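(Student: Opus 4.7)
The plan is to apply Orlov's blowup formula to each morphism $\pr_\pm \colon X_B \to X_\pm$, which blows up the smooth codimension-$2$ center $E_\pm$, obtaining semi-orthogonal decompositions
\[
D(X_B) = \la \pr_\pm^* D(X_\pm),\ i_*\bigl(\excpr_\pm^* D(E_\pm) \otimes \cO_E(-1)_{\mathrm{O},\pm}\bigr) \ra,
\]
where $\cO_E(-1)_{\mathrm{O},\pm}$ denotes the tautological line bundle on the projective bundle $E = \mathbb{P}(N_{E_\pm/X_\pm})$. Since $N_{E_\pm/X_\pm} = \cO_{E_\pm}(-1)^{\oplus 2}$, a direct computation applying the twist formula $\cO_{\mathbb{P}(V \otimes L)}(-1) = \cO_{\mathbb{P}(V)}(-1) \otimes \pi^* L^{-1}$ gives $\cO_E(-1)_{\mathrm{O},\pm} = \cO_{X_B}(E)|_E = \cO(-1,-1)$ in the bidegree coordinates on $E \cong \mathbb{P}^1 \times \mathbb{P}^1$ corresponding to $(E_+, E_-)$, independent of $\pm$. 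Using the exceptional collection $D(E_\pm) = \la \cO_{E_\pm}, \cO_{E_\pm}(1) \ra$ together with the identity $\excpr_\pm^* \cO_{E_\pm}(1) \otimes \cO(-1,-1) = \excpr_\mp^* \cO_{E_\mp}(-1)$, the Orlov decomposition refines to
\[
D(X_B) = \la \pr_\pm^* D(X_\pm),\ i_*\cO(-1,-1),\ i_*\excpr_\mp^* \cO_{E_\mp}(-1) \ra.
\]

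To establish the claimed $\window = \la \pr_\pm^* D(X_\pm), i_*\excpr_\pm^* \cO_{E_\pm}(-1) \ra$, I would first verify semi-orthogonality. The required vanishing $\Hom(i_*\excpr_\pm^* \cO_{E_\pm}(-1), \pr_\pm^* D(X_\pm)) = 0$ reduces by adjunction, using $\omega_{\pr_\pm} = \cO_{X_B}(E)$ together with the projection formula, to the fiberwise vanishing $R\excpr_{\pm *}\cO(-2,-1) = 0$, a consequence of $H^\bullet(\mathbb{P}^1, \cO(-1)) = 0$. Next, I would establish membership and generation by observing that for any $F \in D(X_-)$, decomposing $\pr_-^* F$ along the Orlov SOD for $\pr_+$ yields an exact triangle
\[
\pr_+^* \Phi(F) \to \pr_-^* F \to G(F),
\]
where $\Phi := \pr_{+*} \pr_-^*$ is the Bondal--Orlov flop equivalence and $G(F)$ lies in the second piece of Orlov's SOD for $\pr_+$. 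Computing $G$ on the generators $\cO_{X_-}$ and $j_{-*}\cO_{E_-}(k)$ of $D(X_-)$, and showing that the image of $G$ is generated by $i_*\excpr_+^* \cO_{E_+}(-1)$, yields both the membership $i_*\excpr_+^* \cO_{E_+}(-1) \in \window$ and the inclusion $\pr_-^* D(X_-) \subset \la \pr_+^* D(X_+), i_*\excpr_+^* \cO_{E_+}(-1) \ra$, giving the claimed SOD; the symmetric argument handles the other sign.

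The hard part will be computing $G(j_{-*}\cO_{E_-}(k))$, equivalent to understanding $L\pr_-^* j_{-*}\cO_{E_-}(k)$ modulo $\pr_+^* D(X_+)$ for a range of $k$. I would attack this via the Koszul resolution of $j_{-*}\cO_{E_-}(k)$ on $X_-$, available since $N_{E_-/X_-}$ is a direct sum of line bundles, combined with the factorization of pulled-back tautological sections through $\cO_{X_B}(-E)$ arising from the universal property of the blowup of $E_-$. Once the SODs are in place, the opposite spherical pair structure follows easily: the compositions $\pr_{\mp !}\pr_\pm^*$ realize the standard Bondal--Orlov flop equivalences $D(X_\pm) \cong D(X_\mp)$, giving $\cP_- \leftrightarrow \cP_+$, while the orthogonals $\cQ_\pm := \la i_*\excpr_\pm^* \cO_{E_\pm}(-1) \ra$ are each equivalent to $D(\mathrm{Vect})$ by a direct Ext calculation on $E$ showing these generators are exceptional, so $\cQ_- \leftrightarrow \cQ_+$ is an equivalence as soon as one verifies that the induced functor is nonzero on the generator.
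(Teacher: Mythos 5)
The proposal takes the same starting point as the paper, namely Orlov's blowup semi-orthogonal decomposition, so it is genuinely in the right direction. However, the route from there to the claimed decomposition of $\window$ differs in a way that matters for tractability, and there are a couple of points where the sketch is imprecise.

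The paper does not decompose $\pr_-^*F$ generator-by-generator. Instead it computes the \emph{projection functor} onto the Orlov orthogonal complement all at once: the composition $\excpr_{+*}\,i^!\,\pr_-^*$ is simplified using $i^! = \omega_i[\dim i] \otimes i^*$, base change $i^*\pr_-^* \cong \excpr_-^*j_-^*$, the expression $\cN_{E|X_B} = \excpr_+^*\cO_{E_+}(-1)\otimes\excpr_-^*\cO_{E_-}(-1)$, the projection formula, and flat base change $\excpr_{+*}\excpr_-^* \cong r_+^*r_{-*}$, yielding
\[
\excpr_{+*}\,i^!\,\pr_-^*(\placeholder) \cong \cO_{E_+}(-1) \otimes r_+^*\,r_{-*}\big(\cO_{E_-}(-1)\otimes j_-^*(\placeholder)\big).
\]
The crucial shortcut is that the inner functor $r_{-*}(\cO_{E_-}(-1)\otimes j_-^*(\placeholder))$ lands in $D(\pt)$, so essential surjectivity is verified by a single evaluation (on $s_-^*\cO_{E_-}(1)$, using $j_-^*s_-^* \cong \id$). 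This entirely sidesteps the step you flag as ``the hard part''---computing the orthogonal residue $G$ on each generator $j_{-*}\cO_{E_-}(k)$ via Koszul resolutions. Your approach would likely succeed, but the paper's functorial calculation is both shorter and more robust, precisely because it does not need to chase through an infinite family of generators.

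Two smaller cautions. First, the paper states the Orlov SOD as $D(X_B) = \la \pr_+^*D(X_+),\, i_*\excpr_+^*D(E_+)\ra$ (with $\pr_+^*D(X_+)$ on the left and \emph{no} twist on the orthogonal), whereas your ``refined'' decomposition $\la \pr_\pm^*D(X_\pm),\, i_*\cO(-1,-1),\, i_*\excpr_\mp^*\cO_{E_\mp}(-1)\ra$ is in the Kuznetsov/Orlov order with $\pr_\pm^*D(X_\pm)$ on the left but the twisted complement $i_*(\excpr_+^*D(E_+)\otimes\cO(E)|_E)$ on the right. Those two complements (the left orthogonal ${}^\perp(\pr_+^*D(X_+))$ used by the paper, and the right orthogonal $(\pr_+^*D(X_+))^\perp$ used by Kuznetsov) are different subcategories, and if you mix orderings the semi-orthogonality claim $\Hom(i_*\cO(-1,-1), \pr_+^*D(X_+))=0$ fails; you should fix one convention throughout. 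Second, your triangle $\pr_+^*\Phi(F)\to \pr_-^*F\to G(F)$ with $\Phi = \pr_{+*}\pr_-^*$ corresponds to Kuznetsov's ordering (right adjoint projection onto the right-hand factor), whereas the proposition's stated decomposition of $\window$ is in the paper's ordering where the projection to $\pr_+^*D(X_+)$ is the left adjoint $\pr_{+!}$; either choice can be made to work, but the two flop equivalences $\pr_{+*}\pr_-^*$ and $\pr_{-!}\pr_+^*$ differ and you should be explicit about which one you are decomposing against. Finally, the paper's verification that the orthogonals are equivalent is carried out by a short lemma (the projection of $i_*\excpr_-^*\cO_{E_-}(-1)$ to $D(E_+)$ is $\cO_{E_+}(-1)[-2]$, via the divisor triangle $\id\to i^!i_*\to (\placeholder\otimes\cN_{E|X_B})[-1]$); your plan of ``checking the induced functor is nonzero on the generator'' is the right idea but worth spelling out, since exceptional objects do not automatically give equivalences between their orthogonals without such a check.
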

\begin{proof}
Recall that by a result of Orlov~\cite{O}, see for instance \cite[Theorem~2.6]{Kuz}, we have
\[
D(X_B) = \la \pr_+^* D(X_+), i_* \excpr_+^* D(E_+) \ra.
\]
To obtain a semi-orthogonal decomposition of $\window \subset D(X_B)$, we therefore calculate the image of $\pr_-^* D(X_-) \subset \window$ under the projection to $D(E_+)$: the projection functor is~$\excpr_{+ *} i^!$. Recalling that $i^! = \omega_i [\operatorname{dim} i] \otimes i^*$, it thence suffices to calculate the image of the following functor on $D(X_-)$.
\begin{equation}\label{equation SOD projection} \excpr_{+*} \big( \omega_i \otimes i^* \pr_-^* (\placeholder) \big)
\end{equation}
Now $\omega_i = \cN_{E|X}$, and it is straightforward to show that
\begin{equation}\label{equation normal bdl} \cN_{E|X} = \excpr_+^* \cO_{E_+} (-1) \otimes \excpr_-^* \cO_{E_-} (-1),
\end{equation}
see \cite[Section~11.3]{HuybrechtsFM} for instance. The functor~\eqref{equation SOD projection} is then given as follows.
\begin{align*}
\excpr_{+*} \big( \omega_i \otimes i^* \pr_-^* (\placeholder) \big) &\cong \excpr_{+*} \big( \omega_i \otimes \excpr_-^* j_-^* (\placeholder) \big) \tag{commutativity of~\eqref{equation blowup}}\\
&\cong \excpr_{+*} \big( \excpr_+^* \cO_{E_+} (-1) \otimes \excpr_-^* \cO_{E_-} (-1) \otimes \excpr_-^* j_-^* (\placeholder) \big)\\
&\cong \cO_{E_+} (-1) \otimes \excpr_{+*} \big( \excpr_-^* \cO_{E_-} (-1) \otimes \excpr_-^* j_-^* (\placeholder) \big)\\
&\cong \cO_{E_+} (-1) \otimes \excpr_{+*} \excpr_-^* \big(  \cO_{E_-} (-1) \otimes j_-^* (\placeholder) \big)\end{align*}
Here the third line is obtained by the projection formula. Now, writing $r_\pm\colon E_\pm \to \pt$, we have $\excpr_{+*} \excpr_-^* \cong r_+^* r_{-*}$ by flat base change. The functor~\eqref{equation SOD projection} is therefore isomorphic to the composition of 
\[ r_{-*} \big(  \cO_{E_-} (-1) \otimes j_-^* (\placeholder) \big) \qquad\text{then}\qquad \cO_{E_+} (-1) \otimes r_+^* (\placeholder). \]
The first functor is essentially surjective onto $D(\pt)$: to see this, apply it to $s_-^* \cO_{E_-} (+1)$ where $s_-$ is the bundle projection of $X_-$, and note that $j_-^* s_-^* \cong \id$. The image of~\eqref{equation SOD projection} is thence the image of the second functor, namely the subcategory of $D(E_+)$ generated by $\cO_{E_+} (-1)$.
We deduce a semi-orthogonal decomposition as in the statement, with the other sign following by symmetry.

Finally we show the schober conditions. The required equivalences between $D(X_-)$ and $D(X_+)$ are explained above~\eqref{equation B side schober}. For the equivalences between the orthogonals,
these are generated by single objects, and therefore by symmetry it suffice to prove the following lemma.
\end{proof}

Writing $\cE_\pm = i_* \excpr_\pm^* \cO_{E_\pm} (-1),$ we have the following.

\begin{lemma} The image of $\cE_-$ under the projection to $D(E_+)$ is $\cE_+[-2]$. 
\end{lemma}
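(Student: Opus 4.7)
The plan is to apply the projection functor onto the $D(E_+)$ component of Orlov's decomposition
\[
D(X_B) = \la \pr_+^* D(X_+), i_* \excpr_+^* D(E_+) \ra,
\]
which (composed with the equivalence $i_* \excpr_+^* \colon D(E_+) \xrightarrow{\sim} i_* \excpr_+^* D(E_+)$) is the right adjoint $\excpr_{+*} i^!$ to $i_* \excpr_+^*$. So I would compute
\[
\excpr_{+*}\, i^!\, i_* \excpr_-^* \cO_{E_-}(-1)
\]
and check that it equals $\cO_{E_+}(-1)[-2]$, which corresponds to $\cE_+[-2]$ inside the subcategory $i_* \excpr_+^* D(E_+)$.

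First I would simplify $i^! i_* \cG$ for $\cG$ on the divisor $E \subset X_B$. Using $i^! = i^*(\placeholder) \otimes \cN_{E|X_B}[-1]$ together with the Koszul triangle associated with $0 \to \cO_{X_B}(-E) \to \cO_{X_B} \to \cO_E \to 0$, one obtains the standard formula
\[
i^* i_* \cG \;\cong\; \cG \,\oplus\, \cG \otimes \cN_{E|X_B}^{-1}[1],
\]
and hence $i^! i_* \cG \cong \cG \otimes \cN_{E|X_B}[-1] \,\oplus\, \cG$. Taking $\cG = \excpr_-^* \cO_{E_-}(-1)$ and substituting $\cN_{E|X_B} = \excpr_+^* \cO_{E_+}(-1) \otimes \excpr_-^* \cO_{E_-}(-1)$ from \eqref{equation normal bdl}, the two summands become $\excpr_+^* \cO_{E_+}(-1) \otimes \excpr_-^* \cO_{E_-}(-2)[-1]$ and $\excpr_-^* \cO_{E_-}(-1)$ on $E = \bP^1 \times \bP^1$.

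Next I would push forward by $\excpr_{+*}$. The projection formula extracts $\cO_{E_+}(-1)$ from the first summand, leaving $\excpr_{+*} \excpr_-^*$ of line bundles on $E_-$; by flat base change $\excpr_{+*} \excpr_-^* \cong r_+^* r_{-*}$ for the projections $r_\pm \colon E_\pm \to \pt$. The cohomology computations $R\Gamma(\bP^1, \cO(-1)) = 0$ and $R\Gamma(\bP^1, \cO(-2)) = k[-1]$ kill the second summand entirely and turn the first into $\cO_{E_+}(-1)[-2]$, yielding the claimed answer.

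The main pitfalls, rather than any serious obstacle, are bookkeeping: getting the direction of the shift in $i^* i_*$ correct (so that $i^!$ contributes $\cN[-1]$ rather than $\cN^{-1}[1]$), and lining up the signs/shifts of the cohomology of $\cO_{\bP^1}(-2)$ so that the final degree is $-2$ and not $0$ or $-1$. Everything else is a direct application of base change and the projection formula.
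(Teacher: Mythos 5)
Your proof is correct and takes essentially the same approach as the paper: compute $\excpr_{+*} i^! i_* \excpr_-^*\cO_{E_-}(-1)$ using the normal bundle formula~\eqref{equation normal bdl}, flat base change $\excpr_{+*}\excpr_-^*\cong r_+^* r_{-*}$, and the cohomology of $\cO_{\bP^1}(-1)$ and $\cO_{\bP^1}(-2)$. The only minor difference is that you invoke the splitting $i^!i_*\cG \cong \cG \oplus \cG\otimes\cN_{E|X}[-1]$ (which does hold for a divisor, since the Koszul differential vanishes after restriction to $E$), whereas the paper works directly with the triangle $\id \to i^!i_* \to (\placeholder\otimes\cN_{E|X})[-1]$ without needing the splitting; either way the same two terms appear and are handled identically.
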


\begin{proof}
The projection to $D(E_+)$ is $\excpr_{+ *} i^!$. Noting that $i$ is the embedding of a divisor we have a triangle of functors
\[\id \to i^! i_* \to (\placeholder \otimes \cN_{E|X}) [-1]  \to. \]
Applying this to $\excpr_-^* \cO_{E_-} (-1)$ and using the expression \eqref{equation normal bdl} for $\cN_{E|X}$ gives
\[\excpr_-^* \cO_{E_-} (-1) \to i^! i_* \excpr_-^* \cO_{E_-} (-1) \to \excpr_+^* \cO_{E_+} (-1) \otimes \excpr_-^* \cO_{E_-} (-2) [-1] \to.\]
Applying $\excpr_{+ *} $ to the lefthand term gives zero, where we use flat base change $\excpr_{+*} \excpr_-^* \cong r_+^* r_{-*}$. Applying to the righthand term gives $\cF_+[-2]$ because $r_{-*} \cO_{E_-} (-2) =  \cO_{\pt}[-1]$, and the claim follows. 
\end{proof}

\subsubsection*{Surface case} 

We apply the same approach to the surface case, constructing the required semiorthogonal decompositions using variation of GIT. For the surfaces $X_{\pm}$, we have fibre product diagrams as in \eqref{equation blowup} as follows.
\begin{equation*}
 \begin{tikzpicture}[xscale=1.25,scale=0.75,baseline={([yshift=-.5ex]current bounding box.center)}]

\node (Bminus) at (-1,1) {$ X_\pm $};
\node (Bplus) at (+1,1) {$ X_B $};

\node (Cminus) at (-1,-1) {$ E_\pm $};
\node (Cplus) at (+1,-1) {$ E $};

\draw[<-] (Bminus) to node[above] {\scriptsize $\pr_\pm$}  (Bplus);
\draw[<-] (Cminus) to node[below] {\scriptsize $\excpr_\pm$}  (Cplus);

\draw[right hook->] (Cminus) to node[left] {\scriptsize $j_\pm$}  (Bminus);
\draw[right hook->] (Cplus) to node[right] {\scriptsize $i$}  (Bplus);

\end{tikzpicture}
\end{equation*}
Here we take 
\[ E_- = [0 / \mathbb{Z}_2]  \qquad \text{and} \qquad E_+ = \mathbb{P}^1, \]
with $j_-$ the obvious embedding, and $j_+$ the embedding of the zero section of the bundle $X_+$. The common fibre product $E$ is isomorphic to $[\mathbb{P}^1 / \mathbb{Z}_2]$ with trivial $\mathbb{Z}_2$-action.

We have $\window$ a full subcategory of $D(X_B)$ defined as in the $3$-fold case.

\begin{proposition}\label{prop schober surface} For the surfaces $X_\pm$, the data shown in~\eqref{equation B side schober} yields the opposite of a spherical pair. In particular, have semi-orthogonal decompositions
\begin{equation*}
 \window = \la \pr_{\pm}^* D(X_{\pm}), i_* \excpr_{\pm}^* \cO_{E_{\pm}} (-1) \ra.
\end{equation*}
Here $\cO_{E_-} (-1)$ is used to denote the sheaf on the stack $E_- = [0 / \mathbb{Z}_2]$ corresponding to the non-trivial irreducible representation of $\mathbb{Z}_2$.
\end{proposition}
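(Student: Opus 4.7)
The plan is to mirror the argument of Proposition~\ref{proposition flober sod}, replacing the honest blowup SOD used there by its stacky Orlov analog for $\pr_-\colon X_B\to X_-$ (the blowup of the codimension-$2$ stacky point $E_-=[0/\bZ_2]$) and by the root-stack SOD for $\pr_+\colon X_B\to X_+$ (the $\mu_2$ root stack along the smooth divisor $E_+=\bP^1$). The common exceptional $E=[\bP^1/\bZ_2]$ is a $\bZ_2$-gerbe over $\bP^1$; write $\chi_E$ for the sheaf on $E$ associated to the non-trivial character of $\bZ_2$.

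First I would establish the two three-piece decompositions of $D(X_B)$, namely
\[
\la \pr_-^* D(X_-),\ i_*\bigl(\cO_E(-1) \otimes \excpr_-^* D(E_-)\bigr) \ra
\quad\text{and}\quad
\la \pr_+^* D(X_+),\ i_*\bigl(\chi_E \otimes \excpr_+^* D(E_+)\bigr) \ra,
\]
using $D(E_-)=\la\cO_{E_-},\cO_{E_-}(-1)\ra$ and $D(E_+)=\la\cO(-1),\cO\ra$. Then, following the pattern of Proposition~\ref{proposition flober sod}, I would compute the projection of $\pr_\mp^* D(X_\mp)$ onto the orthogonal of $\pr_\pm^* D(X_\pm)$ using the projection formula, flat base change $\excpr_{\pm*}\excpr_\mp^*\cong r_\pm^* r_{\mp*}$ with $r_\pm\colon E_\pm\to\pt$, and the normal-bundle identity $\cN_{E|X}=\excpr_+^*\cO_{E_+}(-1)\otimes\excpr_-^*\cO_{E_-}(-1)$, which is the surface analog of~\eqref{equation normal bdl} and again follows from the toric picture of Section~\ref{section surface geometry}. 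The outcome should be that the projection hits exactly one of the two orthogonal generators in each SOD, yielding $\window = \la \pr_\pm^* D(X_\pm), i_*\excpr_\pm^*\cO_{E_\pm}(-1) \ra$ on both sides.

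For the spherical pair conditions, the equivalences $D(X_-)\leftrightarrow D(X_+)$ are instances of the derived McKay correspondence of Bridgeland--King--Reid~\cite{BKR}, as already noted in Section~\ref{section outline}. The equivalence of the one-object orthogonals then follows from the surface analog of the lemma after Proposition~\ref{proposition flober sod}: apply the divisor adjunction triangle $\id\to i^!i_*\to(\cN_{E|X}\otimes\placeholder)[-1]\to$ to $\excpr_\mp^*\cO_{E_\mp}(-1)$, push down by $\excpr_{\pm*}$, and use the normal-bundle identity together with the vanishing $r_{-*}\cO_{E_-}(-1)=0$ coming from the non-trivial $\bZ_2$-representation having no invariants. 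The main obstacle will be the careful bookkeeping of the two distinct twist types on the gerbe $E$: a line-bundle twist $\cO_E(-1)$ from the $\bP^1$-factor and a character twist $\chi_E$ from the stacky structure. Matching the single generator $i_*\excpr_\pm^*\cO_{E_\pm}(-1)$ to an SOD component requires tracking these carefully, since it is a pure character twist on the $-$ side but a pure line-bundle twist on the $+$ side.
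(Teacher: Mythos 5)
Your proposal is correct, but it takes a different route from the paper at the key technical step. To obtain the two three-piece semiorthogonal decompositions of $D(X_B)$ (with components $\pr_\pm^* D(X_\pm)$), the paper realizes $\pr_-$ (respectively $\pr_+$) as a variation of GIT following Coates--Iritani--Jiang--Segal~\cite{CIJS}, and then extracts the decompositions from the window calculus of~\cite{HL,BFK,HLShipman}; in the $(+)$ case this requires an explicit weight matrix computation to recognize $X_B$ as a GIT quotient of the same master space. You instead propose to cite the stacky Orlov blowup SOD for $\pr_-$ and the root-stack SOD for $\pr_+$ directly. The paper explicitly anticipates your route in the remark immediately preceding its proof, pointing to~\cite{IU,BLS,SST,Kaw1}--and then says ``Although the results below are likely covered by this work, we found it convenient to apply the variation of GIT method herein.'' So your version is a legitimate shortcut at the cost of importing the precise statements from that literature, whereas the paper's GIT argument is self-contained and also produces the embedding functors $\pr_\pm^*$ and their orthogonals in a uniform way. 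From the intermediate SODs onward, your argument coincides with the paper's: both project $\pr_\mp^*D(X_\mp)$ using projection formula, flat base change $\excpr_{\pm*}\excpr_\mp^*\cong r_\pm^*r_{\mp*}$, and the normal bundle identity~\eqref{equation normal bdl} (which indeed holds verbatim, with one tensor factor now a pure line-bundle twist $\excpr_+^*\cO_{\mathbb{P}^1}(-1)$ and the other a pure character twist $\excpr_-^*\chi$), and both establish the orthogonal equivalence via the divisor adjunction triangle; your observation that $r_{-*}\cO_{E_-}(-1)=0$ because the nontrivial $\mathbb{Z}_2$-character has no invariants is exactly the surface-side input needed. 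One small remark: unlike the symmetric $[-2]$ shift in the threefold lemma, the two orthogonal projections here produce asymmetric shifts (one side uses $r_{-*}$ on the trivial character, the other uses $r_{+*}\cO_{\mathbb{P}^1}(-2)$), but each is still an equivalence of one-object categories, so the spherical pair conditions are unaffected.
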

\begin{remark} There is existing literature on semiorthogonal decompositions for `stacky blowups': this term is convenient to describe both blowups of stacks and root stack construction (see for instance~\cite[Introduction]{BLS}), as both arise naturally in birational geometry of Deligne--Mumford stacks. Derived categories of root stacks have been considered in \cite[Theorem~1.6]{IU}, and these results generalized in~\cite{BLS} and~\cite{SST}. Kawamata has also constructed semiorthogonal decompositions in this situation, for discussion see in particular \cite[Theorem~3.4]{Kaw1} with proofs found in~\cite{Kaw2,Kaw3,Kaw4}. Although the results below are likely covered by this work, we found it convenient to apply the variation of GIT method herein.
\end{remark}
\begin{proof}
We use a general construction of Coates, Iritani, Jiang, and Segal~\cite{CIJS} to realize the blowup $p_-$ as a variation of GIT. We then use standard technology which relates derived categories under variation of GIT~\cite{BFK,HL} to obtain a semi-orthogonal decomposition. The semi-orthogonal decomposition for $p_+$ follows by an appropriate adaptation of this argument.

\smallskip
\noindent {\em Case {\em ($-$)}}: First recall that $\pr_-$ is the blowup in $E_- = [ 0 / \mathbb{Z}_2 ] \subset X_-$. We choose a bundle $\cF$ on $X_-$ with a section~$\sigma$ cutting out $E_-$. Following~\cite[Section~5.2]{CIJS}, we consider the total space $ \operatorname{Tot} (\cF \oplus \cO)$ with a $\mathbb{C}^*$-action of weight~$-1$ on the fibres of $\cF$, and weight~$+1$ on the fibres of~$\cO$. Writing $(v,z)$ for fibre coordinates, and $\pi$ for the projection to $X_-$, we take the substack
\[ [M / \mathbb{C}^*] := \{ v z = \pi^*\sigma \} \subset \operatorname{Tot} (\cF \oplus \cO).\]
The stack $[M / \mathbb{C}^*]$ has GIT quotients
\[  X_- \quad\text{and}\quad \operatorname{Bl}_{E_-} X_- \cong X_B, \]
and fixed locus $M^{\mathbb{C}^*} \cong E_-$. Furthermore, we obtain a semiorthogonal decomposition as follows.
\begin{equation}\label{equation surface intermediate sod}
D(X_B) = \la \pr_-^* D(X_-), i_* \excpr_-^* D(E_-) \ra
\end{equation}

We explain some of the details, so that we can adapt them to the case of $p_+$ below. By general theory, the GIT quotients are derived equivalent to certain `windows' in $D  [M / \mathbb{C}^*]$ of the form
\[\cC_d = \big\{ \cE \in D  [M / \mathbb{C}^*] \,\big|\, \cH^\bullet Li_Z^* \cE \text{ have weights in $[0,d)$} \big\}.\]
Namely, the restriction functors 
\[ \cC_{\operatorname{rk} \cF} \to D(X_B) \qquad \cC_1 \to D(X_-) \]
are equivalences. Furthermore, there is a semiorthogonal decomposition of with $D(X_B)$ with components $D(X_-)$ and $D(E_-)$, the latter appearing with multiplicity $\operatorname{rk} \cF -1 = 1$. In particular, \cite[Lemma~5.2(1)]{CIJS} is used to show that $D(X_-)$ embeds via $\pr_-^*$, and the embedding of the orthogonal follows from~\cite[Lemma~2.3]{HLShipman}.

We then obtain the required semiorthogonal decomposition of $\window$ with component $D(X_-)$ from~\eqref{equation surface intermediate sod} by following the argument for the $3$-fold case. Note in particular that the expression \eqref{equation normal bdl} for $\cN_{E|X}$ holds verbatim.

\smallskip
\noindent {\em Case {\em ($+$)}}: We argue similarly, considering $E_+ = \mathbb{P}^1 \subset X_+$, and taking a line bundle $\cG$ on $X_+$ with a section~$\sigma$ cutting out~$E_+$. For this case we take $ \operatorname{Tot} (\cG \oplus \cO)$ with a $\mathbb{C}^*$-action of weight~$-2$ on the fibres of $\cG$, denote fibre coordinates as above, and consider the substack
\[ [N/\bC^*] := \{ v  z^2 = \pi^*\sigma \} \subset \operatorname{Tot} (\cG \oplus \cO).\]
By the arguments of~\cite[Section~5.2]{CIJS}, we see that~$[N/\bC^*]$ has a GIT quotient~$X_+$. We claim furthermore that it has $X_B$ as a GIT quotient.

For this, first note that $X_+$ may be described as a $\mathbb{C}^*$-quotient of $\mathbb{C}^3$ with weights $\left(\begin{matrix}1 & 1 & -2 \end{matrix}\right)$, coordinates~$(x_1, x_2, y)$, and semistables~$\{(x_1, x_2) \neq 0\}$. Under this description $[N/\bC^*]$ is the locus~$\{(x_1, x_2) \neq 0, \, vz^2=y\}$ in the stack~$[\mathbb{C}^5 / \mathbb{C}^{*2}]$ given as follows.
\newlength{\mycolwdD} \settowidth{\mycolwdD}{$-2$} 
\def\hspacing{14pt} \BAnewcolumntype{D}{>{\centering$}p{\mycolwdD}<{$}@{\hspace{\hspacing}}}
\[\begin{blockarray}{DDDDD}
\small{x_1}&\small{x_2}&\small{y}&\small{v}&\small{z}&\\[1ex]
\begin{block}{(DDDDD)}
1&1&-2&-2&0&\\
0&0&0&-2&1&\\
\end{block}\end{blockarray}\]
By projecting away from $y$, this is isomorphic to the left-hand stack~$[\mathbb{C}^4 / \mathbb{C}^{*2}]$ below. By row and column operations, this is isomorphic to the right-hand one, which can be seen to have a GIT quotient $X_B= [\operatorname{Tot} \cO_{\mathbb{P}^1}(-1)/ \mathbb{Z}_2]$. By inspection, the variation of GIT for $[N/\bC^*]$ induces the map $p_+\colon X_B \to X_+$.
\[\begin{blockarray}{DDDD}
&\small{x_1}&\small{x_2}&\small{v}&\small{z}&\\[1ex]
\begin{block}{(DDDD)}
&1 & 1 & -2 & 0&\\
& 0 & 0 & -2 & 1&\\
\end{block}\end{blockarray}
\qquad\longrightarrow\qquad\quad
\begin{blockarray}{DDDD}
&\small{x_1}&\small{x_2}&\small{z}&\small{v}&\\[1ex]
\begin{block}{(DDDD)}
&1 & 1 & -1 & 0 &\\
& 0 & 0 & 1 & -2 &\\
\end{block}\end{blockarray}
\]

We then again use the argument of~\cite[Section~5.2]{CIJS} to obtain a semi-orthogonal decomposition of $\window$ with component $D(X_+)$. This proceeds as above, except that $\cC_{2 \cdot\operatorname{rk} \cG}$ takes the place of $\cC_{\operatorname{rk} \cF}$. In particular, the multiplicity of components $D(E_+)$ in the semiorthogonal decomposition is now given by $2 \cdot \operatorname{rk} \cG -1 = 2\cdot 1 - 1 =1,$ as required.

\smallskip
Finally we show the schober conditions. The required equivalences between $D(X_-)$ and~$D(X_+)$ are an instance of the main theorem of Bridgeland--King--Reid~\cite{BKR}. The proof of the equivalences between orthogonals goes through as for the $3$-fold case, using the expression \eqref{equation normal bdl} for $\cN_{E|X}$ as before, which applies verbatim.
\end{proof}

 \subsection{A-side constructions}\label{section schober Aside} We  set $\Lambda':=\Lambda_{\Sigma'}$ and $\Lambda_\pm:=\Lambda_{\Sigma_\pm}$. We also write $\LambdaW :=\Lambda_+\cup \Lambda_-$.

\begin{remark} The skeleton $\LambdaW$ is not represented as $\Lambda_\Sigma$ by some $\Sigma$.\end{remark}

Using that $\Lambda_\pm \subset \LambdaW$, there exist natural embeddings:
\begin{equation}\label{equation big schober embedding}
 \lSh_{\Lambda_\pm}(T)\hookrightarrow \lSh_{\LambdaW}(T)\end{equation}
We want to define such a functor on the level of wrapped constructible sheaf categories. For this, recall the following.

\begin{enumerate}\label{wrapped factors}
\item\label{wrapped factors 1} Using smoothness of $X_{\pm}$ and~$X_{B}$, Proposition~\ref{pullback2} gives functors
\begin{equation*}\label{equation wrapped big embedding}\wSh_{\Lambda_\pm}(T)\hookrightarrow \wSh_{\Lambda_B}(T).\end{equation*}
\item\label{wrapped factors 2} Using the inclusion $\LambdaW \subset \Lambda_B$, Proposition~\ref{proposition left adjoint} gives a functor
\[ \iota^w \colon \wSh_{\Lambda_B}(T) \rightarrow \wSh_{\LambdaWsmall}(T). \]
\end{enumerate}

\begin{definition} Composing~(\ref{wrapped factors 1}) and~(\ref{wrapped factors 2}), we write 
\begin{equation*}\label{equation wrapped schober embedding}\mpr^*_\pm\colon \wSh_{\Lambda_\pm}(T)\rightarrow \wSh_{\LambdaWsmall}(T).\end{equation*}
Using Proposition~\ref{proposition left adjoint}, we also have functors
 \[\mpr_{\pm !}\colon \wSh_{\LambdaWsmall}(T) \to \wSh_{\Lambda_\pm}(T).\]
\end{definition}

\begin{remark}We use the notations $\mpr^*_\pm$, $\mpr_{\pm !}$ because these functors will turn out to be mirror to~$\pr^*_\pm$, $\pr_{\pm !}$ between $D(X_\pm)$ and~$\cP_0$.
\end{remark}

\begin{proposition}\label{proposition embedding reexpressed} The functor~$\mpr^*_\pm$ is isomorphic to the restriction of functor~\eqref{equation big schober embedding} to $\wSh_{\Lambda_\pm}(T)$. In particular, it is an embedding.\end{proposition}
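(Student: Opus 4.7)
The plan is to unwind the construction of $\mpr^*_\pm$ and verify that its two building blocks together reproduce, up to canonical isomorphism, the restriction of~\eqref{equation big schober embedding}. First I would factor the quasi-constructible inclusion $\lSh_{\Lambda_\pm}(T)\hookrightarrow \lSh_{\Lambda_B}(T)$ that underlies step~(\ref{wrapped factors 1}) through the intermediate category $\lSh_{\LambdaW}(T)$, writing it as $\iota \circ i_1$, where $i_1$ denotes the embedding~\eqref{equation big schober embedding} and $\iota\colon \lSh_{\LambdaW}(T)\hookrightarrow \lSh_{\Lambda_B}(T)$ is the natural inclusion whose left adjoint $\iota^\Ladj$, restricted to wrapped sheaves, provides the functor $\iota^w$ of step~(\ref{wrapped factors 2}).

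Then, for $\cE\in \wSh_{\Lambda_\pm}(T)$, unwinding definitions gives
\[
\mpr^*_\pm(\cE) \,\simeq\, \iota^\Ladj\bigl(\iota(i_1(\cE))\bigr).
\]
Since $\iota$ is fully faithful, the unit $\id \to \iota^\Ladj\iota$ of the adjunction is a natural isomorphism, so $\mpr^*_\pm(\cE)\simeq i_1(\cE)$ naturally in $\cE$. This identifies $\mpr^*_\pm$ with the restriction of~\eqref{equation big schober embedding} to $\wSh_{\Lambda_\pm}(T)$. As a by-product one also learns that this restriction genuinely takes values in the wrapped subcategory $\wSh_{\LambdaW}(T)$, a fact that is a priori not obvious since the wrapped condition is sensitive to the ambient category.

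For the ``in particular'' claim, the functor $i_1$ is fully faithful between the ambient quasi-constructible categories by construction; since $\wSh_{\Lambda_\pm}(T)$ and $\wSh_{\LambdaW}(T)$ are by definition full subcategories of $\lSh_{\Lambda_\pm}(T)$ and $\lSh_{\LambdaW}(T)$ respectively, full faithfulness of the restriction is then immediate.

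I do not anticipate a real obstacle: the content of the proposition is essentially the triviality $\iota^\Ladj\circ\iota\simeq\id$ for a fully faithful functor admitting a left adjoint, combined with the chasing of the two factorizations above. The only step that deserves a moment of vigilance is ensuring the target category is well-defined for the restriction of $i_1$ to the wrapped subcategory, which is settled \emph{a posteriori} by the natural isomorphism with $\mpr^*_\pm$.
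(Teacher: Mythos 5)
Your proof is correct and follows the paper's argument exactly: factor the $\lSh$-level embedding as $\iota\circ i_1$, use that $\iota^\Ladj\iota\simeq\id$ because $\iota$ is fully faithful, and restrict to the wrapped subcategories. One small terminological slip: the natural isomorphism $\iota^\Ladj\iota\to\id$ is the \emph{counit} of the adjunction $\iota^\Ladj\dashv\iota$, not the unit (the unit runs $\id\to\iota\,\iota^\Ladj$); the isomorphism itself is the one you need, so the substance of the argument is unaffected.
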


\begin{proof}By functoriality, composing~\eqref{equation big schober embedding} with 
the embedding~\[\iota \colon \lSh_{\LambdaWsmall}(T)\hookrightarrow \lSh_{\Lambda_B}(T)\] gives the embeddings~$\iota_\pm\colon \lSh_{\Lambda_\pm}(T)\hookrightarrow \lSh_{\Lambda_B}(T)$. Writing~$\iota^{\Ladj}$ for a left adjoint, we have $\iota^{\Ladj} \compose \iota \simeq \id$. Therefore~\eqref{equation big schober embedding} is isomorphic to the composition of $\iota_\pm$ with~$\iota^{\Ladj}$. Restricting the latter composition to $\wSh_{\Lambda_\pm}(T)$ gives~$\mpr^*_\pm$ by definition.\end{proof}

\begin{claim} Embeddings $\mpr_\pm^*$ give a spherical pair,  which is mirror to the B-side spherical pair from Section~\ref{section Bside schober}.\end{claim}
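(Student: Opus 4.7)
The plan is to transfer the spherical pair structure from the B-side (Propositions~\ref{proposition flober sod} and~\ref{prop schober surface}) across the coherent-constructible correspondence. Concretely, I intend to construct a diagram
\begin{equation*}
 \begin{tikzpicture}[scale=1.9,baseline={([yshift=-.5ex]current bounding box.center)}]
\node (Bminus) at (-1.4,1) {$ \wSh_{\Lambda_-}(T) $};
\node (Bzero) at (0,1) {$ \wSh_{\LambdaWsmall}(T) $};
\node (Bplus) at (+1.4,1) {$ \wSh_{\Lambda_+}(T) $};
\draw[right hook->] (Bminus) to node[above] {\scriptsize $\mpr^*_-$} (Bzero);
\draw[left hook->] (Bplus) to node[above] {\scriptsize $\mpr^*_+$} (Bzero);
\end{tikzpicture}
\end{equation*}
that is isomorphic (up to $\op$) to the opposite of~\eqref{equation B side schober} through the functor~$K_\Sigma$. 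Since the property of being a spherical pair is preserved under equivalence of diagrams, this will yield both assertions in the claim simultaneously.

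First, I would identify the two wings. By the coherent-constructible correspondence (Theorem~\ref{theorem coh-constr} and its corollary), $K_{\Sigma_\pm}$ gives equivalences $D(X_\pm)^{\op}\simeq \wSh_{\Lambda_\pm}(T)$, and $K_{\Sigma_B}$ gives $D(X_B)^{\op}\simeq \wSh_{\Lambda_B}(T)$. Proposition~\ref{pullback} (combined with Proposition~\ref{pullback2}, which applies since $X_\pm$ and $X_B$ are smooth Deligne--Mumford stacks) intertwines the B-side pullbacks $\pr_\pm^*$ with the big-skeleton pullbacks $\wSh_{\Lambda_\pm}(T)\hookrightarrow \wSh_{\Lambda_B}(T)$ of item~(\ref{wrapped factors 1}) on page~\pageref{wrapped factors}. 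Composing with the left adjoint $\iota^w$ of item~(\ref{wrapped factors 2}) then produces $\mpr_\pm^*$. Define
\[
\kappa := \iota^w \compose K_{\Sigma_B}\big|_{\window} \colon \window^{\op} \longrightarrow \wSh_{\LambdaWsmall}(T),
\]
so that $\kappa \compose \pr_\pm^* \simeq \mpr_\pm^* \compose K_{\Sigma_\pm}$ by Proposition~\ref{proposition embedding reexpressed}.

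The main step is to show $\kappa$ is an equivalence. For this I would match semi-orthogonal decompositions on both sides. On the B-side we already have
\[
\window = \la \pr_\pm^* D(X_\pm),\, \cE_\pm\ra
\]
from Propositions~\ref{proposition flober sod} and~\ref{prop schober surface}. On the A-side I would exhibit a decomposition
\[
\wSh_{\LambdaWsmall}(T) = \la \wSh_{\Lambda_\pm}(T),\, \microsky_\pm\ra
\]
in which $\microsky_\pm$ is a microlocal skyscraper sheaf supported on the extra Lagrangian stratum of $\LambdaW$ relative to $\Lambda_\pm$ (as in Figure~\ref{figure surface skeleta} in the surface illustration, and the analogous picture in the threefold case). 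Such a decomposition is obtained by writing down a stop removal sequence for the pair $\Lambda_\pm \subset \LambdaW$, since the complementary stratum is a single isotropic piece; the generator $\microsky_\pm$ is a standard microlocal skyscraper. Applying $K_{\Sigma_B}$ to the B-side decomposition, and noting that the image of $K_\Sigma$ is contained in $\wSh$ and is compatible with $\mpr_\pm^*$, one sees that $\kappa(\cE_\pm)$ generates the same orthogonal as $\microsky_\pm$ (this is where I would use the microlocal stalk of $K_{\Sigma_B}(\cE_\pm)$ along the extra stratum). Since $\kappa$ is fully faithful on each component, it is an equivalence.

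The main obstacle will be the A-side semi-orthogonal decomposition: one must verify that the projection to $\wSh_{\Lambda_\pm}(T)$ (the left adjoint of $\mpr_\pm^*$, essentially surjective by Proposition~\ref{proposition left adjoint}) realizes the embedding of the $\wSh_{\Lambda_\pm}(T)$-factor, and that the orthogonal is generated by a single microlocal skyscraper matching $\kappa(\cE_\pm)$. Once $\kappa$ is known to be an equivalence, the spherical pair axioms (admissibility of the two wings and the two cross equivalences $\wSh_{\Lambda_-}\leftrightarrow \wSh_{\Lambda_+}$ and $\microsky_-\leftrightarrow \microsky_+$) follow immediately from the corresponding statements on the B-side (Propositions~\ref{proposition flober sod} and~\ref{prop schober surface}), and the isomorphism of diagrams constructed above is precisely the mirror equivalence asserted.
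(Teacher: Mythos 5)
Your overall plan — define $\kappa$ by composing the coherent--constructible functor with $\iota^w$, match the semi-orthogonal decompositions of $\window$ and $\wSh_{\LambdaWsmall}(T)$, deduce that $\kappa$ is an equivalence, and then transport the spherical pair structure — is the same strategy the paper follows (Propositions~\ref{sod} and~\ref{conifoldequiv}, leading to Theorems~\ref{theorem mirror Fukaya} and~\ref{theorem mirror Fukaya surf}). However, two steps are asserted where a genuine argument is needed, and each is a point the paper handles explicitly.

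First, you conclude ``Since $\kappa$ is fully faithful on each component, it is an equivalence.'' Full faithfulness on each piece of a semi-orthogonal decomposition does not give full faithfulness on the whole category: $\kappa$ must also preserve the $\Hom$'s from the left component to the right component (take $D(\bP^1) = \langle \cO, \cO(1)\rangle$ versus $D(\pt)\oplus D(\pt)$ for a counterexample to the unqualified claim). The paper establishes full faithfulness of $\kappa_\windowLetter$ on all of $\window$ \emph{before} comparing decompositions: it observes that $\kappa_{\windowLetter\pm}$ factors through the equivalences $\kappa_\pm$ followed by the embeddings $\mpr^*_\pm$ of Proposition~\ref{proposition embedding reexpressed}, and then invokes that $D(X_\pm)$ jointly generate $\window$, the point being that $\kappa_B(\window)$ lands inside the essential image of $\iota\colon \lSh_{\LambdaWsmall}(T)\hookrightarrow\lSh_{\Lambda_B}(T)$, so that $\iota^w$ is a retraction there. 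Only after this does the matching of decompositions give a short formal argument for essential surjectivity on the orthogonal (which, being a fully faithful functor between two copies of $D(\pt)$, is automatically an equivalence up to shift — no microstalk computation of $\kappa(\cE_\pm)$ is required, though your route would also work once full faithfulness is in place). Second, you describe the generator $\microsky_\pm$ as ``a standard microlocal skyscraper'' and the A-side decomposition as a routine ``stop removal sequence.'' The paper's semi-orthogonal decomposition of Proposition~\ref{sod} requires the microlocal skyscraper to be exceptional, and the paper explicitly remarks that microlocal skyscrapers are \emph{not} exceptional in general; it verifies exceptionality in both the threefold and surface cases by an explicit constructible model ($\torusProj_!\bC_D$ for the region $D$ of Figure~\ref{figure region}, and the shaded-triangle sheaf in Section~\ref{section surface schober equiv}), and then identifies the orthogonal with $\wSh_{\Lambda_\pm}(T)$ by an adjoint-functor argument, not by a blanket appeal to stop removal. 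You should verify exceptionality and the orthogonal-identification at this point rather than assume them.
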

We will prove this claim by using mirror symmetry in the next section.

   \section{Mirror equivalences}
\label{section schober equiv}
In this section we prove homological mirror symmetry statements for the A-side and B-side schobers of the previous section, obtaining proofs of Theorems~\ref{theorem schober equivalence} and~\ref{keytheorem flober equiv}.

\subsection{Threefold proof}
Theorem~\ref{theorem coh-constr} gives equivalences
\[
{\kappa_{\Sigma_B}} \colon D(X_B)\xrightarrow{\sim}\wSh_{\Lambda_B}(T) \qquad
{\kappa_{\Sigma_\pm}} \colon D(X_\pm)\xrightarrow{\sim}\wSh_{\Lambda_\pm}(T)
\]
which we write as ${\kappa_{B}}$ and ${\kappa_{\pm}}$ for brevity.

\subsubsection*{Mirror symmetry at the conifold point}
Let us consider the composition
\begin{equation*}
\kappa_\windowLetter\colon \window \hookrightarrow D(X_B)\xrightarrow{\kappa_{B}}\wSh_{\Lambda_B}(T)\xrightarrow{\iota^w}\wSh_{\LambdaWsmall}(T). 
\end{equation*}

\begin{claim}
$\kappa_\windowLetter$ is fully faithful.
\end{claim}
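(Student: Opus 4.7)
Since $\kappa_B$ is an equivalence by Theorem~\ref{theorem coh-constr}, and $\window \hookrightarrow D(X_B)$ is a full embedding by definition, full faithfulness of $\kappa_\windowLetter$ reduces to showing that $\iota^w$ is fully faithful on the essential image $\kappa_B(\window) \subset \wSh_{\Lambda_B}(T)$. The plan is to exploit the semi-orthogonal decomposition
\[
\window = \big\langle \pr_\pm^* D(X_\pm),\, \cE_\pm \big\rangle
\]
from Proposition~\ref{proposition flober sod} (and Proposition~\ref{prop schober surface} in the surface case), with $\cE_\pm = i_* \excpr_\pm^* \cO_{E_\pm}(-1)$. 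It suffices to verify that $\kappa_\windowLetter$ is fully faithful on each component, and that it preserves vanishing of the $\Hom$-spaces between components in both directions; any object of $\window$ is then an extension of objects in the two components, so full faithfulness follows by the standard five-lemma argument on the associated long exact sequences.

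\textbf{The easy component.} On $\pr_\pm^* D(X_\pm)$, Proposition~\ref{pullback} gives $\kappa_B \circ \pr_\pm^* \simeq \mpr^* \circ \kappa_\pm$, where $\mpr^*$ denotes the embedding $\lSh_{\Lambda_\pm}(T) \hookrightarrow \lSh_{\Lambda_B}(T)$. Composing with $\iota^w$ and applying Proposition~\ref{proposition embedding reexpressed}, the restriction $\kappa_\windowLetter|_{\pr_\pm^* D(X_\pm)}$ is identified with $\mpr^*_\pm \compose \kappa_\pm$, the composition of an equivalence with a fully faithful functor. In particular, $\kappa_\windowLetter$ sends this component into the essential image of $\wSh_{\Lambda_\pm}(T)$ inside $\wSh_{\LambdaW}(T)$.

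\textbf{The hard component.} The main obstacle is the component generated by $\cE_\pm$: I need to show that $\iota^w \kappa_B(\cE_\pm)$ retains the correct endomorphism algebra. The most convenient route is to prove that the microsupport of $\kappa_B(\cE_\pm)$ is already contained in $\LambdaW$, so that $\kappa_B(\cE_\pm)$ lies in the essential image of the fully faithful inclusion $\lSh_{\LambdaWsmall}(T) \hookrightarrow \lSh_{\Lambda_B}(T)$. For such objects the unit $\mathrm{Id} \to \iota \iota^\Ladj$ is an isomorphism, and the adjunction then gives
\[
\Hom_{\wSh_{\LambdaWsmall}}\!\big(\iota^w X,\, \iota^w Y\big) \;=\; \Hom_{\wSh_{\Lambda_B}}\!\big(X,\, Y\big)
\]
whenever either $X$ or $Y$ is of this form. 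To carry out the microsupport computation, I would express $\cE_\pm$ through a short exact sequence in $D(X_B)$ relating it to objects pulled back from $X_+$ and $X_-$ via $\pr_\pm^*$ (for example, using that $i_*\excpr_\pm^*\cO_{E_\pm}(-1)$ is a cone involving line bundles pulled back from $X_\pm$, via the standard Koszul-type resolution of the exceptional divisor). Applying $\kappa_B$ to this sequence and using Proposition~\ref{pullback} reduces the question to showing that the microsupports of the images of these line bundles sit in $\Lambda_+ \cup \Lambda_-$, which is combinatorially transparent once one observes that the new rays in $\Sigma_B$ relative to $\Sigma_\pm$ contribute skeleton pieces that lie on the overlap of $\Lambda_+$ and $\Lambda_-$.

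\textbf{Cross Hom-vanishing.} Finally, the semi-orthogonality $\Hom_{D(X_B)}(\pr_\pm^* a, \cE_\pm[k]) = 0$ transports to the A-side without work, because $\kappa_B$ is an equivalence and then $\iota^w$, being a functor, sends zero to zero. In the other direction, semi-orthogonality gives $\Hom(\cE_\pm[k], \pr_\pm^* a) = \Hom(a, \excpr_{\pm *} \cO_{E_\pm}(-1)[\text{shift}])$ by adjunction, a finite-dimensional space computed by the SOD, and the corresponding A-side Hom can be compared term-wise using the same adjunction $\iota^w \dashv \iota$ once both factors are known to have microsupport inside $\LambdaW$. Assembling Steps 2, 3, and this compatibility along the triangle of the SOD yields full faithfulness of $\kappa_\windowLetter$.
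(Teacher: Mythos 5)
Your reduction to showing that $\iota^w$ is fully faithful on the essential image $\kappa_B(\window)$ is correct, but the detour through the semi-orthogonal decomposition overcomplicates the argument and your "hard component" is in fact automatic. The paper works directly from the \emph{defining} generation of $\window$ by $\pr_-^* D(X_-)$ and $\pr_+^* D(X_+)$: by Proposition~\ref{pullback}, $\kappa_B \circ \pr_\pm^* \simeq \mpr^* \circ \kappa_\pm$ lands in $\lSh_{\Lambda_\pm}(T) \subset \lSh_{\LambdaWsmall}(T)$, so every generator of $\kappa_B(\window)$ already lies in the essential image of $\iota\colon \lSh_{\LambdaWsmall}(T) \hookrightarrow \lSh_{\Lambda_B}(T)$. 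Since that essential image is a triangulated subcategory, all of $\kappa_B(\window)$ lies in it, and $\iota^w = \iota^{\Ladj}$ restricted to the image of $\iota$ is a quasi-inverse to $\iota$, hence fully faithful. This makes your separate microsupport analysis of $\cE_\pm$ unnecessary: $\cE_\pm$ is by construction in the triangulated hull of $\pr_\pm^* D(X_\pm)$, so $\SS(\kappa_B(\cE_\pm)) \subset \LambdaW$ is inherited for free, not something you need to extract from a Koszul resolution. In short, the SOD from Proposition~\ref{proposition flober sod} is used by the paper to prove \emph{essential surjectivity} (Proposition~\ref{conifoldequiv}), not full faithfulness; invoking it here creates work that the generation statement already absorbs. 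One small further caution: your "Cross Hom-vanishing" paragraph gestures at an adjunction comparison but does not pin down the relevant fact, which is that the unit map $y \to \iota\,\iota^{\Ladj} y$ is an isomorphism precisely when $y$ lies in the essential image of $\iota$; once you phrase it this way, the cross-Homs (and indeed all Homs) are handled uniformly, making the componentwise five-lemma argument moot.
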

\begin{proof}
Let us consider the precomposition of the inclusion $D(X_{\pm})\hookrightarrow\window$ with $\kappa_\windowLetter$ and denote it $\kappa_{\windowLetter\pm}$. Then by the functoriality in Proposition~\ref{pullback}, this is the same as the composition of $\kappa_{\pm}$ and the functor
$\mpr_\pm^*\colon\wSh_{\Lambda_\pm}(T)\hookrightarrow \wSh_{\LambdaWsmall}(T)$ constructed in Section~\ref{section schober Aside}. Hence $\kappa_{\windowLetter\pm}$ is fully faithful. Since $D(X_{\pm})$ jointly generate $\window$ by definition, the functor $\kappa_\windowLetter$ is also fully faithful.
\end{proof}

Take a point $(x,\xi)\in \Omega_T$ which lies in $\LambdaW$, but not in $\Lambda_\flopSide$. Let $\microsky_{x, \xi}$ be a microlocal skyscraper sheaf.

\begin{remark}
Skyscraper sheaves are exceptional objects. However microlocal skyscraper sheaves are not exceptional in general. 
\end{remark}
However, in our case,
\begin{claim}
$\microsky_{x,\xi}$ is exceptional.
\end{claim}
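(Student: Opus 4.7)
The plan is to reduce $\Ext^*(\microsky_{x,\xi},\microsky_{x,\xi})$ to a local microlocal computation at a smooth point of a Lagrangian, where exceptionality is standard.

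First I would use that $\Lambda_+$ and $\Lambda_-$ are closed conical subsets of $\Omega_T$ and that $(x,\xi)\in \Lambda_+\setminus \Lambda_-$, so there is an open neighbourhood $U$ of $(x,\xi)$ in $\Omega_T$ on which $U\cap \LambdaW = U\cap \Lambda_+$. Thus, microlocally near $(x,\xi)$, the skeleton $\LambdaW$ agrees with $\Lambda_+$, and the self-extension algebra of $\microsky_{x,\xi}$ is computed by a purely local microlocal contribution at $(x,\xi)$, as in the microstalk formalism underlying $\wSh$. Combinatorially, $\Lambda_+=\bigcup_{\sigma\in\Sigma_+}\torusProj(\sigma^\perp)\times(-\sigma)$, whose top strata are smooth Lagrangian pieces (supported on the conormals to the origin in $T$, for top-dimensional $\sigma$); by direct inspection of the fans in Section~\ref{section schober constructions}, the top strata of $\Lambda_+$ and of $\Lambda_-$ correspond to different maximal cones, so the complement $\Lambda_+\setminus \Lambda_-$ contains a non-empty open smooth locus of $\Lambda_+$. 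I would choose $(x,\xi)$ in this smooth locus.

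With that choice, $\LambdaW$ is a smooth Lagrangian germ near $(x,\xi)$, and the standard microlocal model for microstalks -- applied to $\microsky_{x,\xi}$, which by construction has rank one microstalk at $(x,\xi)$ -- identifies $\Ext^*(\microsky_{x,\xi},\microsky_{x,\xi})\simeq k$ in degree zero, giving exceptionality.

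The main obstacle is to be sure the endomorphism algebra is truly $k$ rather than something larger reflecting, e.g., monodromy around a non-simply-connected stratum, or higher-degree contributions from the singularities of the skeleton; this is precisely where the smoothness and simple local geometry of $\LambdaW$ at the chosen $(x,\xi)$ is used, and is why the general failure of exceptionality of microlocal skyscrapers noted in the preceding remark does not occur here. A cleaner alternative, more aligned with the semi-orthogonal decompositions already in hand, is to use the fully faithful $\kappa_\windowLetter$: if one can identify $\microsky_{x,\xi}$, up to shift and twist, with $\kappa_\windowLetter(\cE_+)$ where $\cE_+=i_*\excpr_+^*\cO_{E_+}(-1)$, then exceptionality of $\microsky_{x,\xi}$ transfers from that of $\cE_+$, which is immediate since $\cE_+$ generates a one-dimensional admissible component of the semi-orthogonal decomposition in Propositions~\ref{proposition flober sod} and~\ref{prop schober surface}.
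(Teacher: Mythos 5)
Both of your proposed arguments contain genuine gaps. For the first: your key step asserts that since $(x,\xi)$ is a smooth point of the skeleton, the self-endomorphisms of $\microsky_{x,\xi}$ are computed by a ``standard microlocal model'' to be $k$ in degree zero, with $\microsky_{x,\xi}$ having rank-one microstalk ``by construction.'' But this is exactly the point that requires work, and smoothness of $\Lambda$ at $(x,\xi)$ is \emph{not} sufficient. Consider the FLTZ skeleton of $\mathbb{A}^1$: $\Lambda\subset\Omega_{S^1}$ is the zero section together with a single conormal ray at $0$, every point of which (away from the zero section) is a smooth point of $\Lambda$. The microlocal skyscraper there is mirror to $\cO_0\in D(\mathbb{A}^1)$, whose endomorphism algebra is the Koszul exterior algebra $k\oplus k[-1]$, not $k$ --- a failure of exceptionality at a smooth point. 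The obstruction comes from $\microsky_{x,\xi}$ not being constructible in the usual finite-rank sense (it lives in the large category $\lSh_\Lambda$ and typically wraps around the torus), so its own microstalk at $(x,\xi)$ can have contributions in several degrees even though $\Lambda$ is a smooth germ there. The paper circumvents this by exhibiting an explicit model $\microsky_{x,\xi}\simeq\torusProj_!\bC_D$ via non-characteristic deformation (citing~\cite{K1}), and then reading off the microstalk from the local combinatorial picture of the region $D$; what must be checked is precisely that the specific geometry of $D$ along the cell $\Delta$ gives a rank-one microstalk concentrated in degree zero, which is a genuine computation and not automatic.

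Your second, ``cleaner'' argument is circular in the paper's logical order. To identify $\microsky_{x,\xi}$ with $\kappa_\windowLetter(\cE_+)$ up to shift, you would need to know that $\microsky_{x,\xi}$ lies in the essential image of $\kappa_\windowLetter$. At this stage we only know $\kappa_\windowLetter$ is fully faithful and that its image contains $\wSh_{\Lambda_\pm}(T)$ (via the restrictions $\kappa_\pm$); but $\microsky_{x,\xi}$ is by construction \emph{not} in $\wSh_{\Lambda_\pm}(T)$. Essential surjectivity of $\kappa_\windowLetter$ is the content of Proposition~\ref{conifoldequiv}, whose proof uses the semi-orthogonal decomposition of Proposition~\ref{sod}, which in turn uses exceptionality of $\microsky_{x,\xi}$ --- the very claim at hand. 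So this route cannot be used to prove the claim.
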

\begin{proof}
Let us consider the locally closed subset $D$ in $M_\bR$, as shown in Figure~\ref{figure region}, consisting of those $m\in M_\bR$ such that the following hold.
\begin{equation*}
 \la m, e_1+e_3\ra \leq -1 \quad \la m, e_2\ra\geq 0 \quad \la m, e_1\ra >-1 \quad \la m, -e_2+e_3\ra >-1
\end{equation*}
The $1$-cell~$\Delta$ of~$D$ cut out by
\begin{equation*}
 \la m, e_1+e_3\ra = -1 \quad \la m, e_2\ra = 0
\end{equation*}
has conormal~$\LambdaW-\Lambda_\flopSide$, and it follows that the sheaf $\torusProj_!\bC_D$ gives $\microsky_{x,\xi}$ by non-characteristic deformation as in \cite{K1}. Since $\RHom(\torusProj_!\bC_D, \torusProj_!\bC_D)$ is the microstalk of $\torusProj_!\bC_D$, which is rank~$1$ and degree~$0$ from the picture below, the claim follows.
\end{proof}

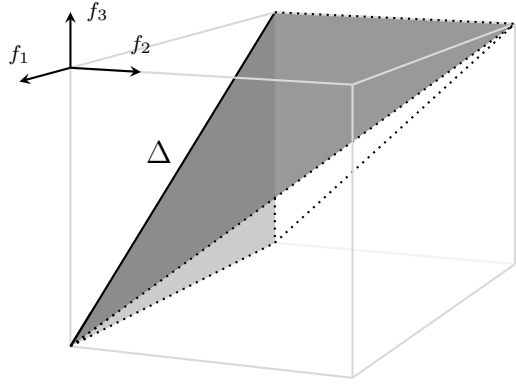
\begin{figure}[h]
  \begin{center}
\begin{tikzpicture}[scale=3]

	\def\lightGray{gray!10}
	\def\midGray{gray!30}
	\def\labelShift{0.12cm}
	
	\coordinate (P1) at (-25cm,1.5cm); 
	\coordinate (P2) at (4cm,1.5cm); 

	\coordinate (A1) at (0em,0cm); 
	\coordinate (A2) at (0em,-1.3cm); 

	\coordinate (A3) at ($(P1)!.95!(A2)$); 
	\coordinate (A4) at ($(P1)!.95!(A1)$); 
	\coordinate (A7) at ($(P2)!.82!(A2)$); 
	\coordinate (A8) at ($(P2)!.82!(A1)$); 
	\coordinate (A5) at
	  (intersection cs: first line={(A8) -- (P1)},
			    second line={(A4) -- (P2)}); 
	\coordinate (A6) at
	  (intersection cs: first line={(A7) -- (P1)}, 
			    second line={(A3) -- (P2)}); 
			   
	\coordinate (Y1) at ($(A4)!-.25!(A5)$);
	\coordinate (Y2) at ($(A4)!.25!(A1)$);
	\coordinate (Y3) at ($(A4)!-.2!(A3)$);
	
	\coordinate (L1) at ($(Y1)+(0,\labelShift)$);
	\coordinate (L2) at ($(Y2)+(0,\labelShift)$);
	\coordinate (L3) at ($(Y3)+(\labelShift,0)$);

	\draw[thick,\lightGray] (A3) -- (A6);
	\draw[thick,\lightGray] (A5) -- (A6);
	\draw[thick,\lightGray] (A7) -- (A6);

	\fill[gray!40] (A5) -- (A3) -- (A6) -- cycle;
	\fill[gray!80] (A5) -- (A3) -- (A8) -- cycle;

	\coordinate (Amid) at
	  (intersection cs: first line={(A5) -- (A6)},
			    second line={(A3) -- (A8)});
	\fill[gray!90] (A5) -- (A3) -- (Amid) -- cycle;

	\draw[thick,dotted] (A3) -- (A6);
	\draw[thick,dotted] (A6) -- (A8);
	\draw[thick] (A5) -- (A3); 
	\draw[thick,dotted] (A3) -- (A8);
	\draw[thick,dotted] (Amid) -- (A6);

	\draw[thick,\midGray] (A1) -- (A2);
	\draw[thick,\midGray] (A3) -- (A4);
	\draw[thick,\midGray] (A7) -- (A8);
	\draw[thick,\midGray] (A1) -- (A4);
	\draw[thick,\midGray] (A1) -- (A8);
	\draw[thick,\midGray] (A2) -- (A3);
	\draw[thick,\midGray] (A2) -- (A7);
	\draw[thick,\midGray] (A4) -- (A5);
	\draw[thick,\midGray] (A8) -- (A5);
	
	\draw[thick,dotted] (A5) -- (A8);
	\draw[thick,dotted] ($(A3)!.99!(A8)$) -- (A8);
	\draw[thick,dotted] (A3) -- ($(A3)!.1!(A8)$);
	\draw[thick] (A3) -- ($(A3)!.1!(A5)$);
	\draw[thick,dotted] (A3) -- ($(A3)!.1!(A6)$);

	\draw[thick,-stealth] (A4) -- (Y1); \node at (L1) {$\scriptstyle f_1$};
	\draw[thick,-stealth] (A4) -- (Y2); \node at (L2) {$\scriptstyle f_2$};
	\draw[thick,-stealth] (A4) -- (Y3); \node at (L3) {$\scriptstyle f_3$};

	\node (D1) at ($(A3)!.5!(A5)+(-0.5*\labelShift,\labelShift)$) {$\Delta$};	

\end{tikzpicture}
    \caption{The region $D$ in $M_\bR$: the basis~$\{ f_i \}$ is dual to~$\{ e_i \}$, and shaded faces are included in~$D$.}
    \label{figure region}
  \end{center}
\end{figure}

\begin{proposition}\label{sod}
\[\wSh_{\LambdaWsmall}(T)\simeq \la \wSh_{\Lambda_\flopSide}(T), \microsky_{x,\xi}\ra.\]
\end{proposition}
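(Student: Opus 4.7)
The plan is to verify the semi-orthogonal decomposition $\la \wSh_{\Lambda_\flopSide}(T), \microsky_{x,\xi}\ra$ by checking three ingredients: admissibility of both components, orthogonality between them, and joint generation. For admissibility of the first component, Section~\ref{section schober Aside} already supplies the embedding $\mpr_-^*\colon \wSh_{\Lambda_-}(T) \hookrightarrow \wSh_{\LambdaWsmall}(T)$ with left adjoint $\mpr_{-!}$. For orthogonality, I would use microlocal Hom-vanishing: the defining property of a microlocal skyscraper~\cite{N} is that $\RHom(\microsky_{x,\xi}, \placeholder)$ computes the microstalk at $(x,\xi)$, and since $(x,\xi)\in \LambdaW \setminus \Lambda_\flopSide$ while $\SS(\cE)\subset \Lambda_\flopSide$ for any $\cE \in \wSh_{\Lambda_\flopSide}(T)$, this microstalk vanishes.

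The main step is joint generation. I would analyze the cellular structure of $\LambdaW \setminus \Lambda_\flopSide$: from the description of fans in Section~\ref{section threefold geometry}, the only maximal cone of $\Sigma_+$ absent from $\Sigma_-$ is $\Cone(e_2, e_1+e_3)$, so the extra FLTZ component consists of a single `new' stratum, namely the one whose conormal contains $(x,\xi)$ and which is represented by the $1$-cell $\Delta$ in Figure~\ref{figure region}. The strategy is to show that for any $\cF \in \wSh_{\LambdaWsmall}(T)$, the cone of the adjunction counit $\mpr_-^*\mpr_{-!} \cF \to \cF$ has microsupport confined to this new stratum, and therefore lies in the subcategory split-generated by $\microsky_{x,\xi}$. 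Combined with the orthogonality, this yields the claimed decomposition.

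The hard part is precisely this last step: justifying that the microlocal content along the unique new stratum is captured by the single object $\microsky_{x,\xi}$. This is not automatic, since a skeleton component can in general carry a nontrivial category of local systems. I would address it either by direct verification—using the explicit model $\microsky_{x,\xi} = \torusProj_!\bC_D$ and the fact, already established in the preceding Claim, that its endomorphisms are concentrated in degree zero—that $\microsky_{x,\xi}$ split-generates the orthogonal to $\wSh_{\Lambda_\flopSide}(T)$, or by invoking a microlocalization-type equivalence in the style of Nadler~\cite{N} applied to this single stratum and noting that the resulting local-system category is one-generated here.
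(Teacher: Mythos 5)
Your overall skeleton — admissibility of $\wSh_{\Lambda_-}(T)$, orthogonality, joint generation — is correct, but the paper takes a route that neatly sidesteps exactly the difficulty you flagged as ``the hard part.'' Rather than embedding $\wSh_{\Lambda_-}(T)$ first and then trying to show the orthogonal is split-generated by $\microsky_{x,\xi}$, the paper starts from the opposite end: since $\microsky_{x,\xi}$ is exceptional (established in the Claim just before), one automatically gets a semi-orthogonal decomposition $\wSh_{\LambdaWsmall}(T) = \la \cC, \microsky_{x,\xi}\ra$ with $\cC$ the orthogonal, so there is no separate ``generation'' step to prove. By the (co)representing property of microlocal skyscrapers, $\cC$ consists of objects with vanishing microstalk at $(x,\xi)$; since the only cell of $\LambdaW$ absent from $\Lambda_-$ is the one through $(x,\xi)$, this forces $\cC\subset \lSh_{\Lambda_-}(T)$, which quietly disposes of your worry about local systems on the new stratum. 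What remains is to show the tautological inclusion $j\colon \wSh_{\Lambda_-}(T)\hookrightarrow \cC$ is essentially surjective, and the paper does this by a short formal argument: it identifies the left adjoint $i^{\Ladj}$ of $\cC\hookrightarrow\wSh_{\LambdaWsmall}(T)$ with $j\circ\iota^w$ via a four-line Hom chase, then cites essential surjectivity of $\iota^w$ (Proposition~\ref{proposition left adjoint}). Your counit-cone strategy would also work but needs two things you correctly identified as nontrivial (microsupport confinement of the cone, and local-system triviality on the stratum), whereas the paper's exceptionality-plus-adjunction route makes both drop out of general nonsense. If you want to pursue your version, the missing inputs are supplied by (i) the contractibility of the single new cell $\Delta$, which rules out nontrivial local systems, and (ii) Nadler's generation result in~\cite{N}, which is what the paper uses implicitly when invoking $\iota^w$.
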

\begin{proof}
Since $\microsky_{x,\xi}$ is exceptional, we have a semi-orthogonal decomposition
\[\wSh_{\LambdaWsmall}(T)\simeq \la \cC, \microsky_{x,\xi}\ra\]
where $\cC$ is the left orthogonal of $\microsky_{x, \xi}$. By the definition of microlocal skyscraper, $\cC\subset \lSh_{\Lambda_-}(T)$.

Since we now have an inclusion $\wSh_{\Lambda_\flopSide}(T)\subset \wSh_{\LambdaWsmall}(T)$, we also have $j\colon \wSh_{\Lambda_\flopSide}(T)\hookrightarrow \cC$. So it suffices to show that this latter inclusion is essentially surjective.

By the definition of semi-orthogonal decomposition, we have a left adjoint $i^{\Ladj}\colon \wSh_{\LambdaWsmall}(T)\rightarrow \cC$ of $i\colon \cC\hookrightarrow \wSh_{\LambdaWsmall}(T)$. On the other hand, we also have a composition of the inclusion $j\colon \wSh_{\Lambda_\flopSide}(T)\hookrightarrow \cC$ and \[\iota^w\colon \wSh_{\LambdaWsmall}(T)\rightarrow \wSh_{\Lambda_\flopSide}(T)\] which is the left adjoint of $\iota\colon \wSh_{\Lambda_\flopSide}(T)\hookrightarrow \wSh_{\LambdaWsmall}(T)$. Then we have, for any $\cE\in\wSh_{\LambdaWsmall}(T)$ and $\cF\in \cC$,
\[
\begin{split}
\Hom_{\cC}(j\circ \iota^w(\cE), \cF)&\simeq \Hom_{\lSh_{\Lambda_\flopSide}}(\iota^w(\cE), \cF)\\
&\simeq \Hom_{\lSh_{\LambdaWsmall}}(\cE, \cF)\\
&\simeq \Hom_{\wSh_{\LambdaWsmall}}(\cE, i(\cF))\\
&\simeq \Hom_{\cC}(i^L(\cE),\cF).
\end{split}
\]
Hence we have $i^{\Ladj}\simeq j\circ \iota^w$. Since $i^{\Ladj}$ is essentially surjective, we can conclude that $j$ is also essentially surjective. This completes the proof.
\end{proof}

\begin{remark}
We also have a semi-orthogonal decomposition of the form
\[\wSh_{\LambdaWsmall}(T)\simeq \la \wSh_{\Lambda_+}(T), \microsky_{x,\xi}\ra\]
where $(x, \xi)\in \bigcup\Lambda_\pm\bs \Lambda_-$. In this case, $F_{x, \xi}$ is $\pi_!\bC_{D'}$ where $D'$ is the image of $D$ under the reflection $e_1\leftrightarrow e_2$.
\end{remark}

We also have a semiorthogonal decomposition $\window \simeq \la D(X_\flopSide), D(\pt)\ra$ from Proposition~\ref{proposition flober sod}. Note that the functor $\kappa_B$ restricts to the equivalence $\kappa_-$.

\begin{proposition}\label{conifoldequiv}
$\kappa_\windowLetter$ is an equivalence.
\end{proposition}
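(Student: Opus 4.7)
The plan is to leverage the matching semi-orthogonal decompositions established just above: Proposition~\ref{proposition flober sod} gives
\[\window = \la \pr_-^* D(X_-),\,\cE_- \ra,\qquad \cE_-:=i_*\excpr_-^*\cO_{E_-}(-1),\]
while Proposition~\ref{sod} gives
\[\wSh_{\LambdaWsmall}(T)\simeq \la \wSh_{\Lambda_-}(T),\,\microsky_{x,\xi}\ra.\]
Both decompositions have the same shape: an equivalent copy of $D(X_-)$, respectively $\wSh_{\Lambda_-}(T)$, on the left, and a single exceptional object generating a copy of $D(\pt)$ on the right. Since $\kappa_\windowLetter$ is already known to be fully faithful, it remains only to establish essential surjectivity, and the plan is to show that $\kappa_\windowLetter$ sends each component of the first SOD onto the corresponding component of the second.

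For the $D(X_-)$ component, I would invoke the intertwining
\[\kappa_\windowLetter\compose \pr_-^*\;\simeq\;\mpr^*_-\compose \kappa_-,\]
which is already implicit in the argument used to prove the preceding Claim. It follows from Proposition~\ref{pullback} applied to the refinement $\Sigma_B$ of $\Sigma_-$, together with Proposition~\ref{proposition embedding reexpressed} identifying $\mpr^*_-$ with the natural embedding $\wSh_{\Lambda_-}(T)\hookrightarrow \wSh_{\LambdaWsmall}(T)$. Since $\kappa_-$ is an equivalence and $\mpr^*_-$ is a fully faithful embedding with essential image $\wSh_{\Lambda_-}(T)$, this pins down the essential image of $\pr_-^* D(X_-)$ under $\kappa_\windowLetter$ as precisely $\wSh_{\Lambda_-}(T)$.

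For the orthogonal component, fully faithfulness of $\kappa_\windowLetter$ transports the Hom-vanishing $\Hom(\cE_-,\pr_-^* D(X_-))=0$ inside $\window$ to the vanishing $\Hom(\kappa_\windowLetter(\cE_-),\wSh_{\Lambda_-}(T))=0$ inside $\wSh_{\LambdaWsmall}(T)$. By the SOD of Proposition~\ref{sod}, the orthogonal of $\wSh_{\Lambda_-}(T)$ is generated by $\microsky_{x,\xi}$, so $\kappa_\windowLetter(\cE_-)\in\la\microsky_{x,\xi}\ra\simeq D(\pt)$. Since $\cE_-$ is exceptional and $\kappa_\windowLetter$ preserves Hom spaces, its image is a non-zero exceptional object of $\la\microsky_{x,\xi}\ra$, hence a shift of $\microsky_{x,\xi}$, and in particular generates the same subcategory. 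Combined with the previous paragraph, this shows $\kappa_\windowLetter$ carries a pair of generators of $\window$ to a pair of generators of $\wSh_{\LambdaWsmall}(T)$, yielding essential surjectivity.

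The main obstacle is the intertwining identity $\kappa_\windowLetter\compose \pr_-^*\simeq \mpr^*_-\compose \kappa_-$; once it is in hand, the rest of the argument is formal categorical bookkeeping. One could alternatively aim for the sharper identification $\kappa_\windowLetter(\cE_-)\simeq\microsky_{x,\xi}$ directly, by unwinding the toric coherent-constructible correspondence applied to $i_*\excpr_-^*\cO_{E_-}(-1)$ along the exceptional divisor of $X_B$, but the coarser generation statement is all that is required for the equivalence.
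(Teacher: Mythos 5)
Your proposal is correct and follows essentially the same route as the paper's proof: both rely on the intertwining $\kappa_\windowLetter\compose \pr_-^*\simeq \mpr^*_-\compose \kappa_-$ (already established in the proof of full faithfulness, via Proposition~\ref{pullback}), then compare the matching semiorthogonal decompositions from Propositions~\ref{proposition flober sod} and~\ref{sod}, and conclude by noting that a fully faithful functor between copies of $D(\pt)$ is a homological shift and hence essentially surjective. Your observation that one could alternatively compute $\kappa_\windowLetter(\cE_-)$ directly is a reasonable aside, but, as you note, the coarser generation argument suffices and is exactly what the paper does.
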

\begin{proof}
We have already shown that $\kappa_\windowLetter$ is fully faithful, and that its restricts on $D(X_\flopSide)$ to an equivalence $D(X_\flopSide)\rightarrow \wSh_{\Lambda_\flopSide}(T)$. Using the semiorthogonal decompositions above we deduce that the restriction of $\kappa_\windowLetter$ to the orthogonal $D(\pt)$ of $D(X_\flopSide)$ gives a functor  
\[\kappa_{\pt} \colon D(\pt) \rightarrow \la \microsky_{x,\xi}\ra.\]
This is fully faithful, therefore via the equivalence $\la \microsky_{x,\xi}\ra\simeq D(\pt)$ it is isomorphic to a homological shift $[s]$. Hence $\kappa_{\pt}$ is essentially surjective, and so the same holds for $\kappa_\windowLetter$, completing the proof.
\end{proof}

\subsubsection*{Relating mirror schobers} We use Proposition~\ref{conifoldequiv} above to conclude equivalences of schobers.

We now have an opposite spherical pair
\[
 \begin{tikzpicture}[scale=2]
\node (Bminus) at (-1.3,1) {$ D(X_-) $};
\node (Bzero) at (0,1) {$ \window $};
\node (Bplus) at (+1.3,1) {$ D(X_+) $};
\draw[right hook->,transform canvas={yshift=+\arrowGap}] (Bminus) to node[above] {\scriptsize $\pr_-^*$} (Bzero);
\draw[left hook->,transform canvas={yshift=+\arrowGap}] (Bplus) to node[above] {\scriptsize $\pr_+^*$} (Bzero);
\draw[<-,transform canvas={yshift=-\arrowGap}] (Bminus) to node[below] {\scriptsize $\pr_{-!}$} (Bzero);
\draw[<-,transform canvas={yshift=-\arrowGap}] (Bplus) to node[below] {\scriptsize $\pr_{+!}$} (Bzero);
\end{tikzpicture}
\]
Let us set $\cP_0':=\cP_0\otimes \omega_{X_B}\subset D(X_B)$.
By tensoring with the canonical sheaves on each category of the above schober, we get an equivalent opposite spherical pair.
\[
 \begin{tikzpicture}[scale=2]
\node (Bminus) at (-1.3,1) {$ D(X_-) $};
\node (Bzero) at (0,1) {$ \window'$};
\node (Bplus) at (+1.3,1) {$ D(X_+) $};
\draw[right hook->,transform canvas={yshift=+\arrowGap}] (Bminus) to node[above] {\scriptsize $\pr_-^!$} (Bzero);
\draw[left hook->,transform canvas={yshift=+\arrowGap}] (Bplus) to node[above] {\scriptsize $\pr_+^!$} (Bzero);
\draw[<-,transform canvas={yshift=-\arrowGap}] (Bminus) to node[below] {\scriptsize $\pr_{-*}$} (Bzero);
\draw[<-,transform canvas={yshift=-\arrowGap}] (Bplus) to node[below] {\scriptsize $\pr_{+*}$} (Bzero);
\end{tikzpicture}
\]
To show this we use the isomorphism
\[p_\pm^! \cong \omega_{X_B}\otimes p_\pm^*\big(\omega_{X_\pm}^{-1} \otimes -\big),\]
which holds because the relative dimension of $p_\pm$ is zero, and its left adjoint.

\begin{theorem}\label{theorem mirror} The opposite spherical pair
\[
 \begin{tikzpicture}[scale=2]
\node (Bminus) at (-1.3,1) {$ D(X_-) $};
\node (Bzero) at (0,1) {$ \window' $};
\node (Bplus) at (+1.3,1) {$ D(X_+) $};
\draw[right hook->,transform canvas={yshift=+\arrowGap}] (Bminus) to node[above] {\scriptsize $\pr_-^!$} (Bzero);
\draw[left hook->,transform canvas={yshift=+\arrowGap}] (Bplus) to node[above] {\scriptsize $\pr_+^!$} (Bzero);
\draw[<-,transform canvas={yshift=-\arrowGap}] (Bminus) to node[below] {\scriptsize $\pr_{-*}$} (Bzero);
\draw[<-,transform canvas={yshift=-\arrowGap}] (Bplus) to node[below] {\scriptsize $\pr_{+*}$} (Bzero);
\end{tikzpicture}
\]
is equivalent via the mirror symmetry functor $K_{\Sigma_{\pm}}$ to an opposite spherical pair as follows.
\[
 \begin{tikzpicture}[scale=2.2]
\node (Bminus) at (-1.5,1) {$ \wSh_{\Lambda_-}(T) $};
\node (Bzero) at (0,1) {$ \wSh_{\LambdaWsmall}(T) $};
\node (Bplus) at (+1.5,1) {$ \wSh_{\Lambda_+}(T) $};
\draw[right hook->,transform canvas={yshift=+\arrowGap}] (Bminus) to node[above] {\scriptsize $\mpr_{-}^*$} (Bzero);
\draw[left hook->,transform canvas={yshift=+\arrowGap}] (Bplus) to node[above] {\scriptsize $\mpr_{+}^*$} (Bzero);
\draw[<-,transform canvas={yshift=-\arrowGap}] (Bminus) to node[below] {\scriptsize $\mpr_{-!}$} (Bzero);
\draw[<-,transform canvas={yshift=-\arrowGap}] (Bplus) to node[below] {\scriptsize $\mpr_{-!}$} (Bzero);
\end{tikzpicture}
\]
\end{theorem}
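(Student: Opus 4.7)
The plan is to define the three vertical equivalences explicitly and then reduce the commutativity of the squares to the two facts the machinery was designed for: the functoriality identity of Proposition~\ref{pullback} and the ``canonical-bundle cancellation'' built into the definition $K_\Sigma=\kappa_\Sigma(-\otimes\omega_\Sigma^{-1})$.

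First, I would take the outer equivalences to be $K_{\Sigma_\pm}\colon D(X_\pm)\xrightarrow{\sim}\wSh_{\Lambda_\pm}(T)$, and define the middle one as the composition
\[
\cP_0'\xrightarrow{(-)\otimes\omega_{X_B}^{-1}}\cP_0\xrightarrow{\kappa_\windowLetter}\wSh_{\LambdaWsmall}(T).
\]
The first arrow is an equivalence by the definition $\cP_0':=\cP_0\otimes\omega_{X_B}$, and the second is Proposition~\ref{conifoldequiv}. Unwinding, this composition is precisely the restriction of $\iota^w\circ K_{\Sigma_B}$ to $\cP_0'$.

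Next, I would verify that the upper squares commute, i.e.\ $\iota^w\circ K_{\Sigma_B}\circ \pr_\pm^!\simeq \mpr_\pm^*\circ K_{\Sigma_\pm}$. Since $\pr_\pm\colon X_B\to X_\pm$ is proper of relative dimension zero, one has $\pr_\pm^!(-)\simeq \omega_{X_B}\otimes \pr_\pm^*\omega_{X_\pm}^{-1}\otimes \pr_\pm^*(-)$. Substituting this into $K_{\Sigma_B}\circ \pr_\pm^!=\kappa_{\Sigma_B}(\pr_\pm^!(-)\otimes \omega_{X_B}^{-1})$, the $\omega_{X_B}$ factors cancel and the expression reduces to $\kappa_{\Sigma_B}\bigl(\pr_\pm^*(-\otimes\omega_{X_\pm}^{-1})\bigr)$. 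Proposition~\ref{pullback} applied to the refinement $\Sigma_B$ of $\Sigma_\pm$ then converts this to $\mpr_\pm^*\circ\kappa_{\Sigma_\pm}(-\otimes\omega_{X_\pm}^{-1})=\mpr_\pm^*\circ K_{\Sigma_\pm}$, where $\mpr_\pm^*$ a priori lands in $\wSh_{\Lambda_B}(T)$; post-composing with $\iota^w$ and comparing with the definition of $\mpr_\pm^*$ in Section~\ref{section schober Aside} (cf.\ Proposition~\ref{proposition embedding reexpressed}) gives the claim. The commutativity of the lower squares then follows by uniqueness of adjoints, since the vertical arrows are equivalences and the upper adjoints have been matched.

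The main technical obstacle is purely the bookkeeping in the canonical-bundle cancellation of the previous step; once this is granted the whole argument is essentially formal, which is why $K_\Sigma$ was introduced in its modified form in the first place. A minor additional point is keeping straight the two a priori distinct meanings of $\mpr_\pm^*$ (into $\wSh_{\Lambda_B}(T)$ via Proposition~\ref{pullback2}, versus into $\wSh_{\LambdaWsmall}(T)$ as in Section~\ref{section schober Aside}), but these are identified by the construction of $\mpr_\pm^*$ as the composition with $\iota^w$. Finally, that the resulting data is still an opposite spherical pair needs no separate check: spherical pair structure is manifestly transported along any equivalence of the defining data, and the B-side pair was shown to be spherical in Proposition~\ref{proposition flober sod}.
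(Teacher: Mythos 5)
Your proposal is correct and follows essentially the same route as the paper's proof: both rest on Proposition~\ref{pullback} for the commutativity, on Proposition~\ref{conifoldequiv} for the middle vertical equivalence, and on the relative-dimension-zero identity $\pr_\pm^! \simeq \omega_{X_B}\otimes\pr_\pm^*(\omega_{X_\pm}^{-1}\otimes -)$ to make the canonical-bundle twists in $K_\Sigma=\kappa_\Sigma(-\otimes\omega_\Sigma^{-1})$ cancel, then pass to left adjoints. The only difference is organizational: the paper first establishes the squares for $\kappa$ with $\pr_\pm^*$ and then post-composes with the $\omega^{-1}$-twist to produce the $K$-squares with $\pr_\pm^!$, whereas you carry out the cancellation in one direct computation; the two versions of $\mpr_\pm^*$ are reconciled just as you say via Proposition~\ref{proposition embedding reexpressed}.
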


\begin{proof} 
By the argument above, in particular using Theorem~\ref{theorem coh-constr}, Proposition~\ref{pullback}, and Proposition \ref{conifoldequiv}, we have commutative squares as follows
\[
 \begin{tikzpicture}[xscale=1.5,scale=2]

\node (Bminus) at (-1,1) {$ D(X_-) $};
\node (Bzero) at (0,1) {$ \cP_0 $};
\node (Bplus) at (+1,1) {$ D(X_+) $};

\node (Aminus) at (-1,0) {$ \wSh_{\Lambda_-}(T) $};
\node (Azero) at (0,0) {$ \wSh_{\LambdaWsmall}(T) $};
\node (Aplus) at (+1,0) {$ \wSh_{\Lambda_+}(T) $};

\draw[->] (Bminus) to node[above,sloped] {\scriptsize $\sim$} (Aminus);
\draw[->] (Bzero) to node[above,sloped] {\scriptsize $\sim$} (Azero);
\draw[->] (Bplus) to node[above,sloped] {\scriptsize $\sim$} (Aplus);

\draw[->] (Bminus) to node[above] {\scriptsize $\pr_{-}^*$} (Bzero);
\draw[->] (Bplus) to node[above] {\scriptsize $\pr_{+}^*$} (Bzero);

\draw[->] (Aminus) to node[above] {\scriptsize $\mpr_{-}^*$} (Azero);
\draw[->] (Aplus) to node[above] {\scriptsize $\mpr_{+}^*$} (Azero);

\end{tikzpicture}
\]
where the vertical arrows are the functors $\kappa$.

Taking left adjoints, we furthermore obtain the following diagram.
\[
 \begin{tikzpicture}[xscale=1.5,scale=2]

\node (Bminus) at (-1,1) {$ D(X_-) $};
\node (Bzero) at (0,1) {$ \cP_0 $};
\node (Bplus) at (+1,1) {$ D(X_+) $};

\node (Aminus) at (-1,0) {$ \wSh_{\Lambda_-}(T) $};
\node (Azero) at (0,0) {$ \wSh_{\LambdaWsmall}(T) $};
\node (Aplus) at (+1,0) {$ \wSh_{\Lambda_+}(T) $};

\draw[<-] (Bminus) to node[above,sloped] {\scriptsize $\sim$} (Aminus);
\draw[<-] (Bzero) to node[above,sloped] {\scriptsize $\sim$} (Azero);
\draw[<-] (Bplus) to node[above,sloped] {\scriptsize $\sim$} (Aplus);

\draw[<-] (Bminus) to node[above] {\scriptsize $\pr_{-!}$} (Bzero);
\draw[<-] (Bplus) to node[above] {\scriptsize $\pr_{+!}$} (Bzero);

\draw[<-] (Aminus) to node[above] {\scriptsize $\mpr_{-!}$} (Azero);
\draw[<-] (Aplus) to node[above] {\scriptsize $\mpr_{+!}$} (Azero);

\end{tikzpicture}
\]
Since the upper lines of the diagrams give the data of a weak spherical pair and satisfy the condition to be a spherical pair, we deduce that the lower lines give the data of an equivalent spherical pair. Note that the associated semi-orthogonal decompositions are given in Proposition~\ref{sod} and Remark after the proposition.

Composing the tensor of the inverse of canonical sheaves with the above schober equivalence, we have
\[
 \begin{tikzpicture}[xscale=1.5,scale=2]

\node (Bminus) at (-1,1) {$ D(X_-) $};
\node (Bzero) at (0,1) {$ \cP_0' $};
\node (Bplus) at (+1,1) {$ D(X_+) $};

\node (Aminus) at (-1,0) {$ \wSh_{\Lambda_-}(T) $};
\node (Azero) at (0,0) {$ \wSh_{\LambdaWsmall}(T) $};
\node (Aplus) at (+1,0) {$ \wSh_{\Lambda_+}(T) $};

\draw[->] (Bminus) to node[above,sloped] {\scriptsize $\sim$} (Aminus);
\draw[->] (Bzero) to node[above,sloped] {\scriptsize $\sim$} (Azero);
\draw[->] (Bplus) to node[above,sloped] {\scriptsize $\sim$} (Aplus);

\draw[->] (Bminus) to node[above] {\scriptsize $\pr_{-}^!$} (Bzero);
\draw[->] (Bplus) to node[above] {\scriptsize $\pr_{+}^!$} (Bzero);

\draw[->] (Aminus) to node[above] {\scriptsize $\mpr_{-}^*$} (Azero);
\draw[->] (Aplus) to node[above] {\scriptsize $\mpr_{+}^*$} (Azero);

\end{tikzpicture}
\]
where the vertical arrows are $\kappa\circ (-\otimes \omega^{-1})=K$.
\end{proof}

Let $\bD:=\cHom(-, \omega)$ be the Grothendieck duality. Then
\begin{equation*}
\begin{split}
\bD\circ p^!\circ \bD&\simeq\bD\circ \big(p^*\cHom(-, \omega)\otimes \omega \otimes p^*\omega^{-1}\big)\\
&\simeq\bD\circ \big(p^*(-)^\vee\otimes p^*\omega\otimes \omega \otimes p^*\omega^{-1}\big)\\
&\simeq\bD\circ \big(p^*(-)^\vee\otimes \omega\big)\\
&\simeq\cHom\big(p^*(-)^\vee\otimes \omega, \omega\big)\\
&\simeq\big(p^*(-)^\vee\big)^\vee\\
&\simeq p^*.
\end{split}
\end{equation*}
Therefore applying $\bD$ to the opposite spherical pair given by the top line in the diagram below
\[
 \begin{tikzpicture}[scale=2, xscale=1.2]

\node (Bminus) at (-1,1) {$ D(X_-) $};
\node (Bzero) at (0,1) {$ \cP_0' $};
\node (Bplus) at (+1,1) {$ D(X_+) $};

\node (Aminus) at (-1,0) {$ D(X_-) $};
\node (Azero) at (0,0) {$ \bD\cP_0' $};
\node (Aplus) at (+1,0) {$ D(X_+) $};

\draw[->] (Bminus) to node[left] {$\bD$} (Aminus);
\draw[->] (Bzero) to node[right] {$\bD$} (Azero);
\draw[->] (Bplus) to node[right] {$\bD$} (Aplus);

\draw[->] (Bminus) to node[above] {\scriptsize $\pr_{-}^!$} (Bzero);
\draw[->] (Bplus) to node[above] {\scriptsize $\pr_{+}^!$} (Bzero);

\draw[->] (Aminus) to node[above] {\scriptsize $p_{-}^*$} (Azero);
\draw[->] (Aplus) to node[above] {\scriptsize $p_{+}^*$} (Azero);

\end{tikzpicture}
\]
we get an anti-equivalent spherical pair. By the construction, $\bD\cP_0'$ is the image of the functors~$p_\pm^*$ and hence~$\bD\cP_0'=\cP_0$. Consequently, we have a spherical pair as follows.
\[
 \begin{tikzpicture}[scale=2]
\node (Bminus) at (-1.3,1) {$ D(X_-) $};
\node (Bzero) at (0,1) {$ \window $};
\node (Bplus) at (+1.3,1) {$ D(X_+) $};
\draw[right hook->,transform canvas={yshift=+\arrowGap}] (Bminus) to node[above] {\scriptsize $\pr_-^*$} (Bzero);
\draw[left hook->,transform canvas={yshift=+\arrowGap}] (Bplus) to node[above] {\scriptsize $\pr_+^*$} (Bzero);
\draw[<-,transform canvas={yshift=-\arrowGap}] (Bminus) to node[below] {\scriptsize $\pr_{-*}$} (Bzero);
\draw[<-,transform canvas={yshift=-\arrowGap}] (Bplus) to node[below] {\scriptsize $\pr_{+*}$} (Bzero);
\end{tikzpicture}
\]
Combining this with Theorem~\ref{theorem GPS} we obtain the following.

\begin{theorem}\label{theorem mirror Fukaya} In the $3$-fold setting of Section~\ref{section threefold geometry}, there exists a spherical pair as follows
\[
 \begin{tikzpicture}[scale=2,xscale=1.3]
\node (Bminus) at (-1.5,1) {$ \cW_{\Lambda_-^\infty}(\cotangent{T}) $};
\node (Bzero) at (0,1) {$ \cW_{\LambdaWsmall^\infty}(\cotangent{T}) $};
\node (Bplus) at (+1.5,1) {$ \cW_{\Lambda_+^\infty}(\cotangent{T}) $};
\draw[right hook->,transform canvas={yshift=+\arrowGap}] (Bminus) to node[above] {\scriptsize $\mpr_{-}^*$} (Bzero);
\draw[left hook->,transform canvas={yshift=+\arrowGap}] (Bplus) to node[above] {\scriptsize $\mpr_{+}^*$} (Bzero);
\draw[<-,transform canvas={yshift=-\arrowGap}] (Bminus) to node[below] {\scriptsize $\mpr_{-*}$} (Bzero);
\draw[<-,transform canvas={yshift=-\arrowGap}] (Bplus) to node[below] {\scriptsize $\mpr_{-*}$} (Bzero);
\end{tikzpicture}
\]
which is equivalent via mirror symmetry to the spherical pair as follows.
\[
 \begin{tikzpicture}[scale=2]
\node (Bminus) at (-1.3,1) {$ D(X_-) $};
\node (Bzero) at (0,1) {$ \window $};
\node (Bplus) at (+1.3,1) {$ D(X_+) $};
\draw[right hook->,transform canvas={yshift=+\arrowGap}] (Bminus) to node[above] {\scriptsize $\pr_-^*$} (Bzero);
\draw[left hook->,transform canvas={yshift=+\arrowGap}] (Bplus) to node[above] {\scriptsize $\pr_+^*$} (Bzero);
\draw[<-,transform canvas={yshift=-\arrowGap}] (Bminus) to node[below] {\scriptsize $\pr_{-*}$} (Bzero);
\draw[<-,transform canvas={yshift=-\arrowGap}] (Bplus) to node[below] {\scriptsize $\pr_{+*}$} (Bzero);
\end{tikzpicture}
\]
\end{theorem}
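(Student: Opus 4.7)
The plan is to assemble the statement from three ingredients already in hand within this section: Theorem~\ref{theorem mirror}, the Grothendieck duality calculation displayed just before the theorem statement, and the Ganatra--Pardon--Shende equivalence of Theorem~\ref{theorem GPS}. No new geometric input is required; the whole proof amounts to composing these pieces and tracking adjoints.

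First, I would invoke Theorem~\ref{theorem mirror}, which supplies an equivalence between the opposite spherical pair on $\cP_0'\subset D(X_B)$ with shriek embeddings $\pr_\pm^!$ and the opposite spherical pair on $\wSh_{\LambdaWsmall}(T)$ with embeddings $\mpr^*_\pm$. Next, I apply Grothendieck duality $\bD=\cHom(-,\omega)$ termwise to the B-side of this equivalence. Using the isomorphism $\bD\circ \pr_\pm^!\circ \bD\simeq \pr_\pm^*$ established in the displayed computation preceding the theorem, this converts the shriek embeddings into standard pullback embeddings $\pr_\pm^*$ and, since $\cP_0$ is by construction generated by the images of the $\pr_\pm^*$, identifies $\bD\cP_0'$ with $\cP_0$. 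Because $\bD$ is an anti-equivalence, the new B-side spherical pair is anti-equivalent to the one in Theorem~\ref{theorem mirror}; equivalently, it is mirror-equivalent to the \emph{opposite} of the A-side spherical pair produced there, which is precisely what is needed to match conventions.

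Finally, I would apply Theorem~\ref{theorem GPS} termwise on the A-side. For each $\Lambda\in\{\Lambda_-,\Lambda_+,\LambdaWsmall\}$ this gives $\wSh_\Lambda(T)^{\op}\simeq \cW_{\Lambda^\infty}(\cotangent{T})$, and functoriality of the GPS equivalence intertwines the embeddings $\mpr_\pm^*$ with their Fukaya counterparts, automatically carrying right adjoints to right adjoints. Composing this with the previous step yields the asserted equivalence of spherical pairs, and the spherical pair axioms on the Fukaya side are inherited from those already verified for $\cP_0$ via Proposition~\ref{proposition flober sod} and the semi-orthogonal decomposition in Proposition~\ref{sod}.

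The only substantive bookkeeping lies in confirming that under the composite equivalence the right adjoints $\pr_{\pm*}$ on the B-side correspond to $\mpr_{\pm*}$ on the Fukaya side. This is formal, since both $\bD$ and the GPS equivalence are compatible with adjunctions, so no genuine obstacle arises; the hardest part of the argument is the underlying Proposition~\ref{conifoldequiv} feeding into Theorem~\ref{theorem mirror}, which has already been proved.
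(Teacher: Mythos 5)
Your proposal is correct and matches the paper's own argument essentially step for step: invoke Theorem~\ref{theorem mirror} for the equivalence of opposite spherical pairs, apply Grothendieck duality on the B-side (using the displayed computation $\bD\circ\pr_\pm^!\circ\bD\simeq\pr_\pm^*$ and $\bD\cP_0'=\cP_0$) to replace shriek embeddings by pullbacks, then compose with the contravariant GPS equivalence so that the two contravariances cancel. The paper's proof is terse at the final stage — it simply defines the Fukaya-side functors as GPS-conjugates of the $\wSh$-side functors, so the ``bookkeeping'' you flag about right adjoints is built in by construction rather than checked separately — but the content is the same.
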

\begin{proof}
Note that $\LambdaWsmall^\infty=(\LambdaWsmall)^{\infty}$ by the definition.
We take the functors between Fukaya categories as the compositions of the functor in Theorem~\ref{theorem GPS} with the functors between wrapped constructible sheaves. The commutativity and equivalences follow from this description.\end{proof}

\begin{remark}
The functors $q_{\pm*}$ between Fukaya categories are isomorphic to stop removal functors as given in~\cite{GPS}. The functors $q^{*}_\pm$ are a bit more subtle, as we used the smoothness of mirrors to construct them. A Fukaya-categorical description may be as follows: the image of a Lagrangian submanifold is its result under the Reeb flow until it stops. However, making this claim precise is a problem.
\end{remark}

\subsection{Surface proof}
\label{section surface schober equiv}

We return to the setting of Section~\ref{section surface geometry}, in particular we use notation as follows.
\[ X_- = [ \mathbb{C}^2/ \mathbb{Z}_2 ] \qquad X_+ = \widetilde{\mathbb{C}^2/ \mathbb{Z}_2} \]

For a toric Deligne--Mumford stack, we have to generalize the definition of $\Lambda_\Sigma$. We only describe the result:
\begin{equation*}
\Lambda_{-}:=\Lambda_{\Sigma_0}\cup (\Lambda_{\Sigma_0}+\tfrac{1}{2}\cdot [e_1^\vee])
\end{equation*}
where $+\frac{1}{2}\cdot [e_1^\vee]$ means the translation by the class $\frac{1}{2}\cdot [e_1^\vee]\in T=M_\bR/M$.

As before, let us take a point $(x,\xi)\in \cotangent{T}$ which lies in $\LambdaW$, but not in~$\Lambda_\flopSide$. Let $\microsky_{x, \xi}$ be a microlocal skyscraper sheaf.

\begin{lemma} $\microsky_{x, \xi}$ is exceptional.
\begin{proof} Again, we have an explicit description.
\[
\surfpicture{0}
\]
We take the constant sheaf on the shaded triangle, where the blue side is included, and other sides are not.
\end{proof}
\end{lemma}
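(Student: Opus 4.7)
The plan is to follow exactly the template of the $3$-fold case just proved: produce $\microsky_{x,\xi}$ explicitly as a pushforward $\torusProj_!\bC_D$ from a concrete locally closed subset $D \subset M_\bR$, and then compute its endomorphism algebra as the microstalk of this sheaf at $(x,\xi)$, which should come out rank one in degree zero.

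Concretely, I would take for $D$ a lift of the shaded triangle shown in the picture to $M_\bR$: a triangle whose vertices project to $(0,\tfrac{1}{2})$, $(1,0)$ and $(0,1)$ in a fundamental domain of $T = M_\bR/M$. The blue diagonal edge from $(1,0)$ to $(0,1)$ is included, while the other two edges are open. The closed edge should be set up so that its outward conormal lies in the direction specified by the chosen $(x,\xi)\in \LambdaW \setminus \Lambda_-$, and the two open edges have conormals along $\Lambda_-$. In particular, the slanted edge meeting the half-integer vertex at $(0,\tfrac{1}{2})$ picks up the shifted component $\Lambda_{\Sigma_0}+\tfrac{1}{2}\cdot [e_1^\vee]$ in the definition of $\Lambda_-$; this is exactly what makes the triangle close up consistently with the skeleton $\LambdaW$.

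With $D$ chosen this way, non-characteristic deformation as in \cite{K1} identifies $\torusProj_!\bC_D$ with $\microsky_{x,\xi}$, and $\RHom(\torusProj_!\bC_D,\torusProj_!\bC_D)$ reduces to the microstalk at $(x,\xi)$. From the local picture near the blue edge --- constant of rank one on the $D$-side, zero on the complementary side --- this microstalk is rank one concentrated in degree zero, whence exceptionality.

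I expect the main subtlety to be the stacky bookkeeping: verifying that, with the toric data of the Deligne--Mumford stack $X_-$ in force (rather than that of a variety), every edge of the chosen triangle really does lie on $\LambdaW$ with the correct side included. This should follow directly from the explicit description of $\Lambda_-$ given in the paragraph preceding the lemma, but deserves a careful check, since it is precisely the place where the non-standard half-period shifted piece of the FLTZ skeleton enters the combinatorics.
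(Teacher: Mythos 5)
Your proposal matches the paper's argument: the paper's entire proof for this lemma is to exhibit $\microsky_{x,\xi}$ as $\torusProj_!\bC_D$ for the half-open triangle with vertices $(0,\tfrac12)$, $(1,0)$, $(0,1)$, with the diagonal edge closed and the other two open, exactly as you describe, and then read off the microstalk as rank one in degree zero just as in the $3$-fold case. Your remark about the role of the half-period shifted component of $\Lambda_-$ is also the correct bookkeeping point; the only thing to be careful about in your phrasing is the inward/outward orientation convention at the closed versus open edges, but this does not affect the substance of the argument.
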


By the same argument as in the conifold case: 

\begin{lemma}
\[\wSh_{\LambdaWsmall}(T)\simeq \la \wSh_{\Lambda_\flopSide}(T), \microsky_{x,\xi}\ra.\]
\end{lemma}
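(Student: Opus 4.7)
The plan is to follow verbatim the argument of Proposition~\ref{sod}, since all the structural ingredients used there have direct surface analogs. First I would invoke the preceding lemma: because $\microsky_{x,\xi}$ is exceptional, it generates a semi-orthogonal decomposition
\[
\wSh_{\LambdaWsmall}(T)\simeq \la \cC, \microsky_{x,\xi}\ra,
\]
where $\cC$ denotes the left orthogonal of $\microsky_{x,\xi}$ inside $\wSh_{\LambdaWsmall}(T)$.

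Next I would identify $\cC$ with $\wSh_{\Lambda_\flopSide}(T)$. Since $(x,\xi)\in\LambdaW\setminus\Lambda_\flopSide$, the defining property of the microlocal skyscraper forces any $\cE\in\cC$ to have microsupport disjoint from the covector class of $\microsky_{x,\xi}$; combined with $\SS(\cE)\subset\LambdaW$, this gives $\cC\subset\lSh_{\Lambda_\flopSide}(T)$. Composed with the natural embedding $\wSh_{\Lambda_\flopSide}(T)\hookrightarrow\wSh_{\LambdaWsmall}(T)$ constructed in Section~\ref{section schober Aside}, this yields an inclusion $j\colon\wSh_{\Lambda_\flopSide}(T)\hookrightarrow\cC$, and the content of the lemma is essential surjectivity of $j$.

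For essential surjectivity I would repeat the adjunction calculation of Proposition~\ref{sod}: the inclusion $i\colon\cC\hookrightarrow\wSh_{\LambdaWsmall}(T)$ coming from the semi-orthogonal decomposition admits a left adjoint $i^{\Ladj}$ which is essentially surjective, while the inclusion $\iota\colon\wSh_{\Lambda_\flopSide}(T)\hookrightarrow\wSh_{\LambdaWsmall}(T)$ admits a left adjoint $\iota^w$ by Proposition~\ref{proposition left adjoint}. For $\cE\in\wSh_{\LambdaWsmall}(T)$ and $\cF\in\cC$, the chain of adjunctions
\[
\Hom_{\cC}(j\circ\iota^w(\cE),\cF)\simeq\Hom_{\lSh_{\Lambda_\flopSide}}(\iota^w(\cE),\cF)\simeq\Hom_{\lSh_{\LambdaWsmall}}(\cE,\cF)\simeq\Hom_{\cC}(i^{\Ladj}(\cE),\cF)
\]
gives $i^{\Ladj}\simeq j\circ\iota^w$, and essential surjectivity of $j$ follows from that of $i^{\Ladj}$.

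I do not anticipate a significant obstacle, since the only place stackiness of $X_-$ enters is through the definition of $\Lambda_-$ as the union $\Lambda_{\Sigma_0}\cup(\Lambda_{\Sigma_0}+\tfrac{1}{2}\cdot[e_1^\vee])$, and this plays no role in the abstract adjunction argument above. The mild potential issue is checking that the left adjoint $\iota^w$ continues to exist in this stacky setting, but this is exactly the content of Proposition~\ref{proposition left adjoint}, which was proved for arbitrary inclusions of Lagrangians and so applies without modification.
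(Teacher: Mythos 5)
The proposal is correct and takes essentially the same approach as the paper: the paper's proof of this surface-case lemma is literally ``By the same argument as in the conifold case,'' i.e.\ Proposition~\ref{sod}, and your write-up faithfully reproduces that argument (the semi-orthogonal decomposition from exceptionality, the inclusion $\cC\subset\lSh_{\Lambda_\flopSide}(T)$, and the adjunction chain identifying $i^{\Ladj}$ with $j\circ\iota^w$), while also explicitly checking that the stacky ingredients still apply.
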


\begin{remark}
The skeleton $\Lambda_{+}$ is the usual FLTZ skeleton, as shown in Figure~\ref{figure surface skeleta}. Again, we have a microlocal skyscraper on a point in $\LambdaW\bs \Lambda_+$ as follows:
\[
\surfpicture{4}
\]
We can prove a semi-orthogonal decomposition for this case: \[\wSh_{\LambdaWsmall}(T)\simeq \la \wSh_{\Lambda_+}(T), \microsky_{x',\xi'}\ra.\]
\end{remark}

In the same way as in the conifold case, we can get a mirror equivalence between schobers, as follows.

\begin{theorem}\label{theorem mirror Fukaya surf} In the surface setting of Section~\ref{section surface geometry}, the analog of Theorem~\ref{theorem mirror Fukaya} holds.
\end{theorem}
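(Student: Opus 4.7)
The strategy is to mirror the argument for Theorem~\ref{theorem mirror Fukaya} step by step, replacing each ingredient with its surface counterpart that has already been established in Proposition~\ref{prop schober surface} and in the lemmas just preceding this theorem. First I would invoke the coherent-constructible correspondence for the three toric (stacky) data in play, namely
\[
\kappa_B \colon D(X_B) \xrightarrow{\sim} \wSh_{\Lambda_B}(T), \qquad \kappa_\pm \colon D(X_\pm) \xrightarrow{\sim} \wSh_{\Lambda_\pm}(T),
\]
using the generalization to toric DM stacks encoded in the definition of $\Lambda_-$ given above. One then defines
\[
\kappa_\windowLetter \colon \windowLetter \hookrightarrow D(X_B) \xrightarrow{\kappa_B} \wSh_{\Lambda_B}(T) \xrightarrow{\iota^w} \wSh_{\LambdaWsmall}(T),
\]
and the same formal argument as in the threefold case, using the functoriality from Proposition~\ref{pullback} together with the fact that $\window$ is generated by the images of $\pr_\pm^*$, shows that $\kappa_\windowLetter$ is fully faithful.

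Next I would combine the two parallel semi-orthogonal decompositions now available: on the complex side, Proposition~\ref{prop schober surface} gives $\windowLetter = \la \pr_\flopSide^* D(X_\flopSide), \cE_\flopSide \ra$ with $\cE_\flopSide = i_* \excpr_\flopSide^* \cO_{E_\flopSide}(-1)$; on the symplectic side, the Lemma above gives $\wSh_{\LambdaWsmall}(T) = \la \wSh_{\Lambda_\flopSide}(T), \microsky_{x,\xi} \ra$. Since $\kappa_\windowLetter$ restricts on $D(X_\flopSide)$ to $\kappa_\flopSide$ which is an equivalence onto $\wSh_{\Lambda_\flopSide}(T)$, it must send the orthogonal component $\langle \cE_\flopSide\rangle \simeq D(\pt)$ fully faithfully into $\langle \microsky_{x,\xi}\rangle \simeq D(\pt)$. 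Such a fully faithful functor is a shift, hence essentially surjective, so $\kappa_\windowLetter$ is an equivalence; the analogous argument with the reflected skeleton (cf.\ the Remark above) handles the $+$ side.

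Having obtained an equivalence of the data of the opposite spherical pair from Proposition~\ref{prop schober surface} with the analogous diagram of wrapped constructible categories, I would then run the Grothendieck duality manoeuvre of Theorem~\ref{theorem mirror} verbatim: tensor with $\omega_{X_B}$ to convert $\pr_\pm^*$ into $\pr_\pm^!$, pass to left adjoints, apply $\bD = \cHom(-,\omega)$ to swap $p^!$ back to $p^*$, and finally compose with Theorem~\ref{theorem GPS} to replace $\wSh_{\Lambda^\infty}(T)^{\op}$ by $\cW_{\Lambda^\infty}(\cotangent{T})$. This yields the desired spherical pair of Fukaya categories equivalent under mirror symmetry to the B-side pair.

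The main obstacle I expect is the verification that the coherent-constructible correspondence really applies to the stacky pieces $X_-$ and $X_B$ in the exact form needed, and in particular that $\kappa_B$ carries the generator $\cE_\flopSide$ (a structure sheaf of a stacky exceptional locus twisted by a non-trivial character) to an object in the orthogonal $\langle \microsky_{x,\xi}\rangle$. Here one must use the refined FLTZ skeleton $\Lambda_- = \Lambda_{\Sigma_0} \cup (\Lambda_{\Sigma_0} + \tfrac{1}{2}[e_1^\vee])$, which is designed precisely so that the character twist is recorded by the translate in~$T$; the explicit constant-sheaf description of $\microsky_{x,\xi}$ supplied in the surface-case Lemma makes the matching concrete. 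Once this bookkeeping is in place, the remainder of the proof is formal and parallels the threefold case without modification.
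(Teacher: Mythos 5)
Your proposal follows essentially the same route as the paper, which simply states that the surface case is proved ``in the same way as in the conifold case'' using the surface versions of Proposition~\ref{prop schober surface}, the exceptionality lemma for $\microsky_{x,\xi}$, and the semiorthogonal decomposition lemma, followed by the Grothendieck-duality manoeuvre of Theorem~\ref{theorem mirror}. Your worry at the end is resolved formally: since $\kappa_\windowLetter$ is fully faithful and restricts to an equivalence on the first semiorthogonal components, its restriction to the orthogonal $\langle\cE_\flopSide\rangle$ automatically lands in $\langle\microsky_{x,\xi}\rangle$, so no explicit matching of $\kappa_B(\cE_\flopSide)$ with the microlocal skyscraper is required beyond what Proposition~\ref{conifoldequiv} already supplies.
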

\subsection{Proof for flober}
\label{section BKS schober equiv}

The coherent-constructible correspondence and an argument similar to the one presented in the last two subsections also gives a mirror equivalence of flobers. Let us only describe the result. 
We have an opposite flober
\[
 \begin{tikzpicture}[scale=2]
\node (Bminus) at (-1.3,1) {$ D(X_-) $};
\node (Bzero) at (0,1) {$ D(X_B) $};
\node (Bplus) at (+1.3,1) {$ D(X_+) $};
\draw[right hook->,transform canvas={yshift=+\arrowGap}] (Bminus) to node[above] {\scriptsize $\pr_-^!$} (Bzero);
\draw[left hook->,transform canvas={yshift=+\arrowGap}] (Bplus) to node[above] {\scriptsize $\pr_+^!$} (Bzero);
\draw[<-,transform canvas={yshift=-\arrowGap}] (Bminus) to node[below] {\scriptsize $\pr_{-*}$} (Bzero);
\draw[<-,transform canvas={yshift=-\arrowGap}] (Bplus) to node[below] {\scriptsize $\pr_{+*}$} (Bzero);
\end{tikzpicture}
\]
which is anti-equivalent to a flober as follows.
\[
 \begin{tikzpicture}[scale=2]
\node (Bminus) at (-1.3,1) {$ D(X_-) $};
\node (Bzero) at (0,1) {$ D(X_B) $};
\node (Bplus) at (+1.3,1) {$ D(X_+) $};
\draw[right hook->,transform canvas={yshift=+\arrowGap}] (Bminus) to node[above] {\scriptsize $\pr_-^*$} (Bzero);
\draw[left hook->,transform canvas={yshift=+\arrowGap}] (Bplus) to node[above] {\scriptsize $\pr_+^*$} (Bzero);
\draw[<-,transform canvas={yshift=-\arrowGap}] (Bminus) to node[below] {\scriptsize $\pr_{-*}$} (Bzero);
\draw[<-,transform canvas={yshift=-\arrowGap}] (Bplus) to node[below] {\scriptsize $\pr_{+*}$} (Bzero);
\end{tikzpicture}
\]
We denote the latter one by $\cP_B$.

On the other hand, the coherent-constructible correspondence $K$ takes the former opposite flober to another flober
\[
 \begin{tikzpicture}[scale=2]
\node (Bminus) at (-1.5,1) {$ \wSh_{\Lambda_-}(T) $};
\node (Bzero) at (0,1) {$ \wSh_{\Lambda_B}(T) $};
\node (Bplus) at (+1.5,1) {$ \wSh_{\Lambda_+}(T) $};
\draw[right hook->,transform canvas={yshift=+\arrowGap}] (Bminus) to node[above] {\scriptsize $\mpr_{-}^*$} (Bzero);
\draw[left hook->,transform canvas={yshift=+\arrowGap}] (Bplus) to node[above] {\scriptsize $\mpr_{+}^*$} (Bzero);
\draw[<-,transform canvas={yshift=-\arrowGap}] (Bminus) to node[below] {\scriptsize $\mpr_{-!}$} (Bzero);
\draw[<-,transform canvas={yshift=-\arrowGap}] (Bplus) to node[below] {\scriptsize $\mpr_{-!}$} (Bzero);
\end{tikzpicture}
\]
and then Theorem \ref{theorem GPS} takes this to an anti-equivalent flober as follows.
\[
 \begin{tikzpicture}[scale=2,xscale=1.5]
\node (Bminus) at (-1.5,1) {$ \cW_{\Lambda_-^\infty}(\cotangent{T}) $};
\node (Bzero) at (0,1) {$ \cW_{\Lambda_B^\infty}(\cotangent{T}) $};
\node (Bplus) at (+1.5,1) {$ \cW_{\Lambda_+^\infty}(\cotangent{T}) $};
\draw[right hook->,transform canvas={yshift=+\arrowGap}] (Bminus) to node[above] {\scriptsize $\mpr_{-}^*$} (Bzero);
\draw[left hook->,transform canvas={yshift=+\arrowGap}] (Bplus) to node[above] {\scriptsize $\mpr_{+}^*$} (Bzero);
\draw[<-,transform canvas={yshift=-\arrowGap}] (Bminus) to node[below] {\scriptsize $\mpr_{-*}$} (Bzero);
\draw[<-,transform canvas={yshift=-\arrowGap}] (Bplus) to node[below] {\scriptsize $\mpr_{-*}$} (Bzero);
\end{tikzpicture}
\]
We denote this by $\cP_A$.

\begin{theorem}\label{theorem flober equiv}
The two flobers $\cP_A$ and $\cP_B$ are equivalent via mirror symmetry.
\end{theorem}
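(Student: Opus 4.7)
The plan is to follow the argument for Theorem~\ref{theorem mirror Fukaya} essentially verbatim, with one simplification: because $X_B$ is itself a smooth toric stack, Theorem~\ref{theorem coh-constr} directly supplies an equivalence $K_{\Sigma_B}\colon D(X_B)\xrightarrow{\sim}\wSh_{\Lambda_B}(T)$, so no analog of Proposition~\ref{conifoldequiv} is required. Indeed, the point of cutting down to $\window$ in the spherical-pair case was to enforce admissibility of the embeddings, which is not demanded for a flober, so here we work with the full middle category.

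First I would verify that the modified CCC functors $K_{\Sigma_\pm}$ and $K_{\Sigma_B}$ intertwine the structural morphisms of the two flobers. By Proposition~\ref{pullback}, together with the identity $p_\pm^!\simeq \omega_{X_B}\otimes p_\pm^*(\omega_{X_\pm}^{-1}\otimes -)$, which holds because $p_\pm$ has relative dimension zero, the square with horizontals $p_\pm^!$ and $\mpr_\pm^*$ and verticals $K_{\Sigma_\pm}$, $K_{\Sigma_B}$ commutes up to isomorphism. Passing to left adjoints produces the matching intertwining of $p_{\pm*}$ with $\mpr_{\pm!}$. This exhibits an equivalence between the opposite flober $(D(X_\pm),\, D(X_B),\, p_\pm^!,\, p_{\pm*})$ on the coherent side and the opposite flober $(\wSh_{\Lambda_\pm}(T),\, \wSh_{\Lambda_B}(T),\, \mpr_\pm^*,\, \mpr_{\pm!})$ on the constructible side.

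Next I would apply Theorem~\ref{theorem GPS} at each of the three vertices to convert the constructible-side opposite flober contravariantly into an anti-equivalent flober of partially wrapped Fukaya categories, which by construction is $\cP_A$. Separately, applying the Grothendieck duality $\bD=\cHom(-,\omega)$ to the coherent-side opposite flober produces $\cP_B$, using the identity $\bD\circ p^!\circ\bD\simeq p^*$ reproduced in the excerpt above. Since $\cP_A$ and $\cP_B$ are now both realized as anti-equivalents of one and the same opposite flober, composing these two anti-equivalences yields the desired mirror equivalence $\cP_A\simeq \cP_B$.

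The main obstacle, such as it is, will be the bookkeeping of adjoints and variances through this chain, so that the intertwinings of both the embeddings and their right adjoints propagate consistently and the flober axiom that the round-trip functors $\delta_\mp^{\Radj}\circ\delta_\pm$ are equivalences is automatically transported along the zig-zag. All such verifications are formal consequences of the commutative squares already assembled, so I do not expect genuinely new technical input beyond what appears in the proof of Theorem~\ref{theorem mirror Fukaya}; the flober condition itself is inherited from the original flop equivalences of Bondal--Orlov, respectively Bridgeland--King--Reid, on the coherent side.
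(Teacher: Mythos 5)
Your argument is correct and follows exactly the route the paper takes in Section~\ref{section BKS schober equiv}: apply $K_{\Sigma_\bullet}$ (using Proposition~\ref{pullback} and the $p_\pm^!$ formula) to pass from the coherent-side opposite flober to the wrapped-constructible one, then Theorem~\ref{theorem GPS} to land on $\cP_A$, while Grothendieck duality converts the coherent-side opposite flober into $\cP_B$. Your observation that smoothness of $X_B$ lets Theorem~\ref{theorem coh-constr} apply directly, so that no analogue of Proposition~\ref{conifoldequiv} is needed, is precisely the simplification the paper exploits when it states that an argument similar to (but easier than) the spherical-pair case suffices.
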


\section{Applications to singularities}
\label{section sing HMS}

We first explain work of \BKandS{} calculating cohomology of the flober~$\cP_B$, before giving an analogous calculation for the flober~$\cP_A$, proving Proposition~\ref{keyproposition a-side cohom} and Corollary~\ref{keycorollary sing equiv}.

\subsection{B-side calculation}

Bondal--Kapranov--Schechtman showed the following:
\begin{proposition}{}\cite[Proposition 2.12]{BonKapSch}
The flober $\cP_B$ has the 2nd compact cohomology
\[
\bH^2_c(\Delta, \cP_B)\simeq D(X_0),
\]
meaning that the diagram
\begin{equation}\label{pushout}
\begin{tikzpicture}[xscale=1,scale=1.5,baseline={([yshift=-.5ex]current bounding box.center)}]

\node (Bminus) at (0,1) {$ D(X_B) $};
\node (Bplus) at (1,0) {$ D(X_+) $};

\node (Cminus) at (-1,0) {$ D(X_-) $};
\node (Cplus) at (0,-1) {$ D(X_0) $};

\draw[->] (Bminus) to node[above right] {\scriptsize $\pr_{+*}$}  (Bplus);
\draw[->] (Cminus) to node[below left] {\scriptsize $f_{-*}$}  (Cplus);

\draw[<-] (Cminus) to node[above left] {\scriptsize $\pr_{-*}$}  (Bminus);
\draw[<-] (Cplus) to node[below right] {\scriptsize $f_{+*}$}  (Bplus);

\end{tikzpicture}
\end{equation}
is a push-out in the Morita model category of dg-categories, where the $f_\pm$ are the resolutions $X_\pm \to X_0$. 
\end{proposition}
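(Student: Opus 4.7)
The plan is to produce a canonical dg-functor from the Morita homotopy push-out
\[
\mathcal{P}_{\mathrm{po}} := D(X_-) \sqcup^{h}_{D(X_B)} D(X_+)
\]
to $D(X_0)$ and then to prove that it is an equivalence. Since the underlying geometric triangle commutes, with $f_- \circ p_- = f_+ \circ p_+ =: g \colon X_B \to X_0$, one has natural isomorphisms $f_{-*} \circ p_{-*} \simeq g_* \simeq f_{+*} \circ p_{+*}$, and the universal property of the Morita push-out then produces a dg-functor $\Phi \colon \mathcal{P}_{\mathrm{po}} \to D(X_0)$.

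For essential surjectivity I would use that $X_0$ has rational Gorenstein singularities in both Example~\ref{keyexample surface} and Example~\ref{keyexample threefold}, so in particular $f_{\pm *} \mathcal{O}_{X_\pm} \simeq \mathcal{O}_{X_0}$ and $g_* \mathcal{O}_{X_B} \simeq \mathcal{O}_{X_0}$. Since each $f_\pm \colon X_\pm \to X_0$ is a crepant rational resolution (small in the threefold case, minimal in the surface case), standard results give that the push-forwards $f_{\pm *} \colon D(X_\pm) \to D(X_0)$ are Verdier localizations, and in particular essentially surjective; composing with either leg of the push-out shows that $\Phi$ is essentially surjective onto $D(X_0)$, which is already idempotent-complete.

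The heart of the argument is full faithfulness. For $E \in D(X_-)$ and $F \in D(X_+)$, morphisms in $\mathcal{P}_{\mathrm{po}}$ are computed by a bar-type resolution involving $D(X_B)$ together with the two projection functors $p_{\pm *}$; by flat base change along the fibre square $X_B = X_- \times_{X_0} X_+$, combined with the projection formula, one should identify the resulting complex with $\operatorname{RHom}_{X_0}(f_{-*}E,\, f_{+*}F)$. For Hom-spaces internal to a single side $D(X_\pm)$, the identifications introduced by the push-out along $D(X_B)$ must cut out precisely the kernel of $f_{\pm *}$, which on generators is controlled by the semi-orthogonal decompositions of Proposition~\ref{proposition flober sod}.

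The main obstacle is exactly this last reconciliation. It amounts to a cohomological descent statement for the ``cover'' $X_- \sqcup X_+ \to X_0$ with intersection $X_B$: namely, that $D(X_0)$ is the geometric realization of the associated \v Cech-type simplicial object of derived categories. The required descent depends on the rational-singularities vanishings $R^{>0} f_{\pm *} \mathcal{O}_{X_\pm} = 0$ and $R^{>0} g_* \mathcal{O}_{X_B} = 0$, together with the fact that $p_{\pm *} \circ p_\pm^* \simeq \mathrm{id}$, all of which can be checked directly from the explicit toric geometry of Sections~\ref{section surface geometry} and~\ref{section threefold geometry}.
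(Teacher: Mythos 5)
Your proposal is genuinely different in shape from the paper's (and Bondal--Kapranov--Schechtman's) argument, but as written it contains gaps that are in fact the main content of the result, and one step that does not apply in this geometry.

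The paper does not construct a comparison functor $\Phi$ to $D(X_0)$ and then check it is an equivalence. Instead it uses the fact that one leg of the span is a Verdier/Drinfeld quotient: setting $\cL_+ := \ker(\pr_{+*})$, one has $D(X_+) \simeq D(X_B)/\cL_+$ (Lemma 2.14 of \cite{BonKapSch}). A general lemma about push-outs of dg-categories along localizations (a variant of Lemma 2.17 of \cite{BonKapSch}) then identifies the homotopy push-out of $D(X_-) \leftarrow D(X_B) \to D(X_B)/\cL_+$ with $D(X_-)/\cS$, where $\cS$ is the thick hull of the image of $\cL_+$ under $\pr_{-*}$. The remaining lemma (Lemma 2.18 of \cite{BonKapSch}) is that this image split-generates $\cC_- := \ker(f_{-*})$, so $D(X_-)/\cS \simeq D(X_-)/\cC_- \simeq D(X_0)$. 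This route never needs to compute Hom-spaces in the push-out directly, nor does it invoke any descent statement.

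Your proposal, by contrast, reduces to exactly the two hard lemmas without proving them, and injects a new step that fails. First, you assert as ``standard'' that $f_{\pm *}\colon D(X_\pm) \to D(X_0)$ is a Verdier localization; this is not standard, and it is precisely the content of Lemma 2.14 of \cite{BonKapSch}, which requires an argument. Second, your full-faithfulness step relies on flat base change along the fibre square $X_B = X_- \times_{X_0} X_+$, but none of $f_-$, $f_+$, $\pr_-$, $\pr_+$ is flat (they are birational, contracting a divisor or a curve), so flat base change does not apply; even Tor-independence of the square would need to be checked, and in the flop case it is not automatic. Third, the identification of the kernel generated by the push-out with $\ker(f_{\pm *})$ on generators is exactly the split-generation statement of Lemma 2.18 of \cite{BonKapSch}: you gesture at Proposition~\ref{proposition flober sod}, but that proposition gives a decomposition of $\cP_0$, not of the full $D(X_B)$, and it does not by itself identify $\overline{\pr_{-*}(\cL_+)}$ with $\cC_-$. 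Finally, you acknowledge the ``cohomological descent'' obstacle at the end, but the vanishings $R^{>0} f_{\pm *}\cO_{X_\pm} = 0$ and $\pr_{\pm *}\pr_\pm^* \simeq \mathrm{id}$ do not by themselves establish that $D(X_0)$ is the realization of the \v Cech object you describe; that is a statement about the full derived categories, not just structure sheaves. In short, the missing idea is the reduction to a push-out along a localization, which turns a computation you cannot carry out directly into two concrete lemmas about kernels of push-forwards.
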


Let us recall their logic. First we extend the diagram to the following, defining $\cL_+$ (respectively~$\cC_-$) to be the kernel of $p_{+*}$ (respectively~$f_{-*}$).
\[
\begin{tikzpicture}[xscale=1,scale=1.5]

\node (Aminus) at (-1,2) {$ \cL_+ $};
\node (Aplus) at (-2,1) {$ \cC_- $};
\node (Aphantom) at (2,1) {$ \phantom{\cC_-} $};

\node (Bminus) at (0,1) {$ D(X_B) $};
\node (Bplus) at (1,0) {$ D(X_+) $};

\node (Cminus) at (-1,0) {$ D(X_-) $};
\node (Cplus) at (0,-1) {$ D(X_0) $};

\draw[->] (Aminus) to node[above left] {\scriptsize $h$}  (Aplus);
\draw[->] (Aminus) to (Bminus);
\draw[->] (Aplus) to (Cminus);

\draw[->] (Bminus) to node[above right] {\scriptsize $\pr_{+*}$}  (Bplus);
\draw[->] (Cminus) to node[below left] {\scriptsize $f_{-*}$}  (Cplus);

\draw[<-] (Cminus) to node[above left] {\scriptsize $\pr_{-*}$}  (Bminus);
\draw[<-] (Cplus) to node[below right] {\scriptsize $f_{+*}$}  (Bplus);

\end{tikzpicture}
\]

\begin{lemma}{}\cite[Lemma 2.14]{BonKapSch}\label{BKSlemma1}
In this situation, we have $D(X_B)/\cL_+\simeq D(X_+)$ and $D(X_-)/\cC_-\simeq D(X_0)$.
\end{lemma}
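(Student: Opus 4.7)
The plan is to establish both equivalences via the same categorical mechanism: a triangulated functor admitting a fully faithful left adjoint realizes its target as the Verdier quotient of its source by its kernel. For $\pr_{+*}\colon D(X_B)\to D(X_+)$ the left adjoint is $\pr_+^*$, and for $f_{-*}\colon D(X_-)\to D(X_0)$ it is $f_-^*$; in each case, full faithfulness of the left adjoint is equivalent, via the projection formula, to the identity that the derived pushforward of the structure sheaf on the source is the structure sheaf on the target. Granting this, each of $\pr_{+*}$ and $f_{-*}$ induces an equivalence on the quotient by its kernel.

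For $\pr_+\colon X_B\to X_+$, I would verify $R\pr_{+*}\cO_{X_B}\cong \cO_{X_+}$ case-by-case. In the $3$-fold setting $\pr_+$ is the blowup of the smooth variety $X_+$ along the smooth curve $E_+$, where the identity is classical. In the surface setting $\pr_+$ is a $\bZ_2$-root stack along $E_+$, for which the same identity holds by a direct computation. In fact one gets more: combining full faithfulness of $\pr_+^*$ with the exceptional component $i_*\excpr_+^* D(E_+)$ gives the semiorthogonal decomposition
\[
D(X_B) \,=\, \la \pr_+^* D(X_+),\; i_*\excpr_+^* D(E_+) \ra,
\]
which is Orlov's blowup formula in the $3$-fold case, and its surface analog constructed in the proof of Proposition~\ref{prop schober surface}. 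The right-hand component is precisely $\cL_+=\ker\pr_{+*}$, so passing to the quotient gives $D(X_B)/\cL_+\simeq D(X_+)$.

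For $f_-\colon X_-\to X_0$, the same mechanism yields the second equivalence once $Rf_{-*}\cO_{X_-}\cong \cO_{X_0}$ is established. In the $3$-fold case $f_-$ is a small crepant resolution of the conifold, which has rational singularities, so the identity holds. In the surface case $f_-$ is the coarse moduli map $[\bC^2/\bZ_2]\to \bC^2/\bZ_2$, for which the identity reduces to the definition of the invariant ring. Thus $f_{-*}$ realizes $D(X_0)$ as the Verdier quotient $D(X_-)/\cC_-$.

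The main obstacle is the surface case, where $X_0$ is singular and $X_-$ is a Deligne--Mumford stack, so that $D(X_0)$ must be interpreted carefully and compatibly with the dg-Morita model in which the push-out~\eqref{pushout} is taken, and where full faithfulness of $f_-^*$ requires the projection formula for a coarse moduli map. Working with enhancements of unbounded quasi-coherent derived categories sidesteps these technicalities, and the restriction back to the bounded coherent subcategory follows from the cohomological finiteness of the relevant adjoints already used above.
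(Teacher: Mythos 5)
The paper cites this lemma from Bondal--Kapranov--Schechtman without giving its own proof, so there is no in-paper argument to compare against; I'll assess your proposal on its merits.

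Your general mechanism (a functor with a fully faithful left adjoint induces an equivalence from the Verdier quotient by its kernel onto the target) is correct, and your treatment of the first equivalence $D(X_B)/\cL_+\simeq D(X_+)$ is sound: both $X_B$ and $X_+$ are smooth, so $\pr_+^*$ is defined on $D^b(\mathrm{coh})$, the full faithfulness follows from $R\pr_{+*}\cO_{X_B}\cong\cO_{X_+}$, and Orlov's blowup formula (resp.\ the root-stack decomposition) identifies $\cL_+$ with the remaining semiorthogonal component, which is generated by a line-bundle twist of $i_*\excpr_+^*D(E_+)$ rather than $i_*\excpr_+^*D(E_+)$ itself, but this does not affect the conclusion.

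The gap is in the second equivalence, $D(X_-)/\cC_-\simeq D(X_0)$, precisely where you relegate the singular case to a final paragraph. Since $X_0$ is singular, $Lf_-^*$ is not defined on $D^b(\mathrm{coh}\,X_0)$, only on $\mathrm{Perf}(X_0)$ (or on $D_{qc}$), so your mechanism gives $D_{qc}(X_-)/\ker Rf_{-*}\simeq D_{qc}(X_0)$ but does not descend to the bounded coherent categories in the way you claim. Restricting to compact objects \`a la Thomason would produce $\mathrm{Perf}(X_0)$, which is a \emph{proper} subcategory of $D(X_0)$; moreover $Rf_{-*}$ does not even preserve compactness, since it carries $D^b(\mathrm{coh}\,X_-)=\mathrm{Perf}(X_-)$ into $D^b(\mathrm{coh}\,X_0)\supsetneq\mathrm{Perf}(X_0)$ (e.g.\ $Rf_{-*}\cO_{\bP^1}=k(0)$ is not perfect on $X_0$). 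The phrase ``cohomological finiteness of the relevant adjoints'' therefore doesn't deliver the needed conclusion: what is actually required is essential surjectivity (or at least split-density, which is what the Morita model needs) of $Rf_{-*}\colon D^b(\mathrm{coh}\,X_-)\to D^b(\mathrm{coh}\,X_0)$, which amounts to showing that $\mathrm{Perf}(X_0)$ together with pushforwards of sheaves supported on the exceptional locus (such as $k(0)$) split-generate $D^b(\mathrm{coh}\,X_0)$. This is true in both examples — for an isolated hypersurface singularity the singularity category is split-generated by the residue field via syzygies — but it is the substantive content of the statement, and your sketch does not prove it.
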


We moreover have the following general proposition, which is a slight modification of {\cite[Lemma 2.17]{BonKapSch}}.
\begin{proposition}
Let $u\colon \cS_1\rightarrow \cT_1$ be a fully-faithful dg-functor between Karoubian pre-triangulated dg-categories and $v\colon \cT_1\rightarrow \cT_2$ be a dg-functor. Let $\cS_2$ be the thick triangulated hull of objects of $v(\cS_1)$. Then the push-out of 
\[
 \begin{tikzpicture}[xscale=1,scale=1.5]

\node (Bminus) at (0,1) {$ \cT_1 $};
\node (Bzero) at (1,0) {$ \cT_1/\cS_1$};

\node (Aminus) at (-1,0) {$ \cT_2 $};

\draw[->] (Bminus) to node[above left] {\scriptsize $v$} (Aminus);

\draw[->] (Bminus) to (Bzero);

\end{tikzpicture}
\]
is Morita-equivalent to $\cT_2/\cS_2$. 
\end{proposition}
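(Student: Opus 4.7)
The plan is to verify the claim by comparing universal properties in the Morita homotopy category of dg-categories. Recall that for a Karoubian pre-triangulated target $\cU$, the Drinfeld quotient satisfies the following universal property: morphisms $\cT/\cS \to \cU$ (up to Morita equivalence) correspond bijectively to morphisms $F \colon \cT \to \cU$ such that $F(X) \simeq 0$ for every $X \in \cS$. The strategy is to apply this universal property twice to identify both sides as corepresenting the same functor on dg-categories.

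First, I would identify what the pushout $\cP$ of $\cT_2 \xleftarrow{v} \cT_1 \xrightarrow{\pi} \cT_1/\cS_1$ represents. For any Karoubian pre-triangulated target $\cU$, giving a morphism $\cP \to \cU$ amounts to giving compatible morphisms $F \colon \cT_2 \to \cU$ and $G \colon \cT_1/\cS_1 \to \cU$ together with an equivalence $F \circ v \simeq G \circ \pi$. Using the universal property of $\cT_1/\cS_1$, the existence of such a $G$ is equivalent to the single condition that $F \circ v$ sends every object of $\cS_1$ to a zero object of $\cU$, i.e.~that $F$ annihilates the essential image $v(\cS_1)$.

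Next, I would observe that $F$ annihilates $v(\cS_1)$ if and only if it annihilates the thick triangulated hull $\cS_2$ of $v(\cS_1)$ in $\cT_2$. The nontrivial direction uses that $F$ takes values in a pre-triangulated Karoubian category, so the full subcategory of objects sent to $0$ is closed under shifts, cones, and direct summands, hence contains the thick hull. Applying the universal property of the Drinfeld quotient a second time, such $F$ corresponds to a morphism $\cT_2/\cS_2 \to \cU$. Chaining the two equivalences, $\cP$ and $\cT_2/\cS_2$ corepresent the same functor on the Morita homotopy category, and are therefore Morita equivalent.

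The main subtlety, as I see it, is justifying that the higher coherence data $F v \simeq G \pi$ really collapses, in the Morita model structure, to the purely object-level condition that $F$ annihilates $v(\cS_1)$. This rests on viewing $\pi \colon \cT_1 \to \cT_1/\cS_1$ as a Bousfield localization in the Morita model structure of Tabuada, so that precomposition with $\pi$ fully faithfully embeds $\mathrm{Map}(\cT_1/\cS_1, \cU)$ into $\mathrm{Map}(\cT_1, \cU)$ with image exactly those maps killing $\cS_1$; the pushout is then the corresponding Bousfield localization of $\cT_2$ at the morphisms $0 \to v(X)$ for $X \in \cS_1$, which by the argument above is precisely $\cT_2/\cS_2$.
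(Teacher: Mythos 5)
Your argument is correct, but it takes a genuinely different route from the paper's. The paper's own proof is a two-step reduction to cited results of Bondal--Kapranov--Schechtman: their Lemma~2.17 identifies the homotopy push-out with $\cT_2/\cS_2'$, where $\cS_2'$ is the full sub dg-category of $\cT_2$ spanned by the objects of $v(\cS_1)$ (no thick closure taken), and their Lemma~2.16 then supplies the Morita equivalence $\cT_2/\cS_2' \simeq \cT_2/\cS_2$. You instead run a direct corepresentability argument: both the push-out and $\cT_2/\cS_2$ corepresent, on pretriangulated Karoubian targets $\cU$, the subspace of $\operatorname{Map}(\cT_2,\cU)$ of functors annihilating $v(\cS_1)$, and the two applications of the quotient's universal property together with the observation that the kernel of an exact functor into a Karoubian pretriangulated target is thick (so that annihilating $v(\cS_1)$ is the same as annihilating $\cS_2$) replace the two BKS lemmas. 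What the paper's route buys is brevity given the reference; what yours buys is a self-contained argument, at the cost of invoking --- as you correctly flag --- the $(-1)$-truncatedness of $\operatorname{Map}(\cT_1/\cS_1,\cU)\to\operatorname{Map}(\cT_1,\cU)$ in the Morita homotopy theory, which is what collapses the homotopy pullback of mapping spaces to the plain condition that $F$ kills $v(\cS_1)$; that input is standard but genuinely nontrivial, and is essentially equivalent in content to the cited BKS Lemma~2.17.
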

\begin{proof}
Let $\cS_2'$ be the full sub dg-category of $\cT_2$ spanned by objects of $v(\cS_1)$. Then \cite[Lemma 2.17]{BonKapSch} says the desired push-out is quasi-equivalent to $\cT_2/\cS_2'$. Since $\cT_2/\cS_2'$ is Morita-equivalent to $\cT_2/\cS_2$ (for example, one can deduce it from \cite[Lemma 2.16]{BonKapSch}), we complete the proof.
\end{proof}

The situation of the above proposition is: 
\[
 \begin{tikzpicture}[xscale=1,scale=1.5]

\node (Bminus) at (-1,2) {$ \cS_1 $};
\node (Bzero) at (0,1) {$ \cT_1 $};
\node (Bplus) at (1,0) {$ \cT_1/\cS_1 $};

\node (Aminus) at (-2,1) {$ \cS_2 $};
\node (Aphantom) at (2,1) {$ \phantom{\cS_2} $};
\node (Azero) at (-1,0) {$ \cT_2 $};
\node (Aplus) at (0,-1) {$ \cT_2/\cS_2 $};

\draw[->] (Bminus) to node[above,sloped] {} (Aminus);
\draw[->] (Bzero) to node[above left] {\scriptsize $v$}  (Azero);
\draw[->] (Bplus) to node[above,sloped] {} (Aplus);

\draw[->] (Bminus) to node[below left] {\scriptsize $u$}  (Bzero);
\draw[->] (Bzero) to node[above] {} (Bplus);

\draw[->] (Aminus) to node[above] {} (Azero);
\draw[->] (Azero) to node[above] {} (Aplus);

\end{tikzpicture}
\]
To apply this proposition to our situation, the following is sufficient.
\begin{lemma}{}\label{BKSLemma2}\cite[Lemma 2.18]{BonKapSch}
The functor $h\colon \cL_+\rightarrow \cC_-$ is a split generation. 
\end{lemma}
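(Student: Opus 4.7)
The plan is to exhibit a single object in $\cL_+$ whose image under $h$ recovers a generator of $\cC_-$; split generation then follows automatically by passing to the thick Karoubian closure. First I would identify explicit generators of $\cC_- = \ker(f_{-*}\colon D(X_-)\to D(X_0))$: in the 3-fold Atiyah flop setting, this null category is classically generated by $j_{-*}\cO_{E_-}(-1)$, as is standard for small crepant resolutions with exceptional $\bP^1$ of normal bundle $\cO(-1)^{\oplus 2}$; in the surface case $f_-\colon[\bC^2/\bZ_2]\to \bC^2/\bZ_2$, the analogous analysis shows that $\cC_-$ is generated by $j_{-*}(\cO_{[0/\bZ_2]}\otimes\chi)$ with $\chi$ the non-trivial character of $\bZ_2$, since only this character fails to descend under $f_{-*}$. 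In both cases the generator is (the pushforward of) the sheaf already written $\cO_{E_-}(-1)$ in the paper's notation.

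I would then take as the preimage the object $\cE_- := i_* q_-^* \cO_{E_-}(-1) \in D(X_B)$ from Propositions~\ref{proposition flober sod} and~\ref{prop schober surface}. Using the base-change identity $\pr_{+*}\circ i_* \cong j_{+*}\circ q_{+*}$ associated to the fibre square~\eqref{equation blowup}, one checks $\pr_{+*}\cE_- \cong j_{+*} q_{+*}q_-^*\cO_{E_-}(-1) = 0$: in the 3-fold case $q_{+*}q_-^*$ on $E = \bP^1\times\bP^1$ computes global sections on the opposite $\bP^1$-factor and kills $\cO(-1)$, while the surface case is the $\bZ_2$-equivariant analog of the same statement. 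Hence $\cE_- \in \cL_+$, and the key computation is then
\[
h(\cE_-) \;=\; \pr_{-*}\cE_- \;=\; j_{-*}\, q_{-*}q_-^*\cO_{E_-}(-1) \;=\; j_{-*}\cO_{E_-}(-1),
\]
by base change and the projection formula, combined with $q_{-*}\cO_E = \cO_{E_-}$. Thus $h$ carries the explicit object $\cE_- \in \cL_+$ onto the generator of $\cC_-$, so the thick Karoubian hull of $h(\cL_+)$ is all of $\cC_-$.

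The hard part, though only mildly so, is the surface case: the computations above are standard base-change and projection-formula manipulations on the fibre square, but one must track which character of $\bZ_2$ appears throughout, since $E_- = [0/\bZ_2]$ is a stacky point and $\cO_{E_-}(-1)$ stands for the non-trivial character. Because the non-trivial character is precisely what fails to descend under $f_{-*}$, both the identification of the generator of $\cC_-$ and the computation of $h(\cE_-)$ pass through the equivariant setting without loss, completing the proof.
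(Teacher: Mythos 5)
The paper does not give its own proof of this lemma; it imports it verbatim from \cite[Lemma 2.18]{BonKapSch}, where the statement is established in the generality of flops of families of curves. What you have written is a correct, self-contained alternative argument tailored to the two examples of the paper, and it is a genuine contribution beyond what the paper records. The strategy is sound: you produce a single explicit object $\cE_- = i_*\excpr_-^*\cO_{E_-}(-1)\in D(X_B)$, check it lies in $\cL_+$ because $\excpr_{+*}\excpr_-^*\cO_{E_-}(-1)$ vanishes (cohomology of $\cO(-1)$ along the opposite $\mathbb{P}^1$-factor of $E$, or the $\bZ_2$-equivariant analogue in the surface case), and compute $h(\cE_-)=\pr_{-*}\cE_- \cong j_{-*}\cO_{E_-}(-1)$, which classically generates $\cC_-=\ker f_{-*}$. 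Since the thick Karoubian closure of $h(\cL_+)$ then contains a classical generator of $\cC_-$, split generation follows. Your bookkeeping of the non-trivial $\bZ_2$-character in the stacky surface case is also handled correctly, consistently with the paper's convention that $\cO_{E_-}(-1)$ denotes that character on $E_-=[0/\bZ_2]$.

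One small terminological slip: the identities $\pr_{\pm*}\circ i_* \simeq j_{\pm*}\circ\excpr_{\pm*}$ that you invoke are not instances of base change; they are simply functoriality of derived pushforward along the commutative square~\eqref{equation blowup}, since $\pr_\pm\circ i = j_\pm\circ\excpr_\pm$. (Genuine flat base change, $\excpr_{+*}\excpr_-^*\simeq r_+^*r_{-*}$, is the ingredient the paper uses in Proposition~\ref{proposition flober sod} for a closely related vanishing, and would give you the same conclusion, but your direct computation on $E$ is equally valid and arguably cleaner.) The other ingredient you use without spelling out, namely that $\cC_-$ is classically generated by $j_{-*}\cO_{E_-}(-1)$ in the Atiyah case and by the pushforward of the non-trivial character in the surface case, is standard and reasonable to cite rather than reprove in this context.
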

This completes the logic to prove $D^b(X_0)\simeq \text{push-out of (\ref{pushout})}$.

\subsection{A-side calculation}
Let us now turn to the A-side. We would like to prove the following:
\begin{proposition}\label{proposition a-side cohom}
The A-model flober $\cP_A$ has 2nd compact cohomology
\[
\bH^2_c(\Delta, \cP_A)\simeq \cW_{\bigcap\!\Lambda_\pm^\infty}(\Omega_T).
\]
\end{proposition}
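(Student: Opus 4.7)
The plan is to adapt the B-side argument of \BKS{} to the A-side by working on the constructible sheaf side via the GPS equivalence (Theorem~\ref{theorem GPS}), and exploiting the fact that all three categories in $\cP_A$ are of the form $\wSh_\Lambda(T)$ for skeleta related by inclusions $\Lambda_\pm\subset\Lambda_B$. First, I would pass through Theorem~\ref{theorem GPS} to reduce the claim to computing the homotopy push-out, in the Morita model, of the diagram
\begin{equation*}
\wSh_{\Lambda_+}(T)\longleftarrow \wSh_{\Lambda_B}(T)\longrightarrow \wSh_{\Lambda_-}(T)
\end{equation*}
where the arrows are the left adjoints $\iota^w$ of Proposition~\ref{proposition left adjoint}, and to show that this push-out is $\wSh_{\Lambda_-\cap\Lambda_+}(T)$.

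Next, I would apply the abstract dg-categorical push-out result used in the previous subsection (the slight Morita refinement of \cite[Lemma~2.17]{BonKapSch}), taking $T_1:=\wSh_{\Lambda_B}(T)$, $T_2:=\wSh_{\Lambda_-}(T)$, $v\colon T_1\to T_2$ the left adjoint, and $S_1:=\cL^A_+$ the kernel of $\iota^w\colon \wSh_{\Lambda_B}(T)\to \wSh_{\Lambda_+}(T)$. The identification $\wSh_{\Lambda_B}(T)/\cL^A_+\simeq \wSh_{\Lambda_+}(T)$ is an instance of the general Verdier-quotient statement analogous to \cite[Lemma~2.14]{BonKapSch}, and follows from the semi-orthogonal decomposition furnished by the adjunction. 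The proposition then identifies the push-out with the Verdier quotient $\wSh_{\Lambda_-}(T)/\cC^A_-$, where $\cC^A_-$ is the thick hull of $v(\cL^A_+)$ in $\wSh_{\Lambda_-}(T)$.

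It then remains to identify $\cC^A_-$ with the kernel of $\iota^w\colon \wSh_{\Lambda_-}(T)\to\wSh_{\Lambda_-\cap\Lambda_+}(T)$. My plan is to argue microlocally: the subcategory $\cL^A_+$ is generated by microlocal skyscraper sheaves supported at points $(x,\xi)\in\Lambda_B\setminus\Lambda_+$, by an argument modeled on the explicit computations in Section~\ref{section schober equiv} (in particular the exceptional microlocal skyscrapers constructed via non-characteristic deformation as in~\cite{K1}). Under $v$, these are sent to (shifts of) microlocal skyscrapers at the corresponding points of $\Lambda_-$, which lie in $\Lambda_-\setminus(\Lambda_-\cap\Lambda_+)$. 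Quotienting $\wSh_{\Lambda_-}(T)$ by the thick hull of all such skyscrapers yields $\wSh_{\Lambda_-\cap\Lambda_+}(T)$ by the same type of microlocal argument.

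The main obstacle will be the analog of \cite[Lemma~2.18]{BonKapSch}, namely the split-generation statement: every microlocal skyscraper at a point of $\Lambda_-\setminus(\Lambda_-\cap\Lambda_+)$ must be a summand of the image under $v$ of some object of $\cL^A_+$. I expect this to be verified by a direct combinatorial analysis of the skeleta described in Sections~\ref{section surface geometry} and~\ref{section threefold geometry}, using that $\Lambda_B$ refines each of $\Lambda_\pm$ and that the left adjoint $v$ can be computed explicitly on microlocal skyscrapers by restricting microsupport. Once this split-generation statement is in hand, combining the identifications above gives $\bH^2_c(\Delta,\cP_A)\simeq \wSh_{\Lambda_-\cap\Lambda_+}(T)$, and applying Theorem~\ref{theorem GPS} once more yields the claimed equivalence with $\cW_{\Lambda_-^\infty\cap\Lambda_+^\infty}(\Omega_T)$.
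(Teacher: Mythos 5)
Your plan is essentially the approach taken in the paper. The paper likewise works entirely on the constructible-sheaf side, uses the Morita-refined push-out formalism of \BKS{} (\cite[Lemma~2.17]{BonKapSch}), identifies the kernels of the left-adjoint functors $\wSh_{\Lambda_B}(T)\to\wSh_{\Lambda_\pm}(T)$ and $\wSh_{\Lambda_-}(T)\to\wSh_{\bigcap\Lambda_\pm}(T)$ as the idempotent-closed hulls of microlocal skyscraper sheaves over $\Lambda_B\setminus\Lambda_\pm$ and $\Lambda_-\setminus\bigcap\Lambda_\pm$, and isolates the split-generation statement as the key step, which it proves using exactly the facts you cite: that $\mpr_{-!}$ carries a microlocal skyscraper at a point of $\Lambda_-$ to a microlocal skyscraper at the same point, and kills those at points of $\Lambda_B\setminus\Lambda_-$. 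The only material the paper adds, which you elide as "the general Verdier-quotient statement analogous to~\cite[Lemma~2.14]{BonKapSch}," is a short abstract development (a Bousfield-localization argument at the level of $\lSh$ together with Thomason's localization theorem to pass to compact objects) proving $\wSh_{\Lambda'}(Z)\simeq\wSh_{\Lambda}(Z)/\!\langle R\rangle^{\idem}$ for a general inclusion $\Lambda'\subset\Lambda$; this is the input your plan presupposes, and is where most of the technical work in the paper's proof actually lies.
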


We would first like to set up a somewhat general situation. Let $Z$ be a real analytic manifold. Let $\Lambda$ be a conic Lagrangian in $\Omega_Z$ and $\Lambda'$ be a closed conic Lagrangian subset of $\Lambda$. Then there exists a canonical inclusion \[\iota\colon \lSh_{\Lambda'}(Z)\rightarrow \lSh_{\Lambda}(Z)\] and a left adjoint $\iota^{\Ladj}\colon \lSh_{\Lambda}(Z)\rightarrow \lSh_{\Lambda'}(Z)$.

\begin{notation}Let $S_{\Lambda}(\Lambda\bs \Lambda')$ be the set of microlocal skyscraper sheaves in $\lSh_{\Lambda}(Z)$ over $\Lambda\bs \Lambda'$ and $S_{\Lambda}(\Lambda')$ be the set of microlocal skyscraper sheaves in $\lSh_{\Lambda}(Z)$ over $\Lambda'$. 
\end{notation}

For brevity, we set \[S:=S_{\Lambda}(\Lambda\bs \Lambda')\cup S_{\Lambda}(\Lambda') \qquad \text{and} \qquad R:=S_{\Lambda}(\Lambda\bs \Lambda').\]
Then $\iota^{\Ladj}$ maps $R$ to $0$ and $S\bs R$ to a set of generators of $\lSh_{\Lambda'}(Z)$. Then $\lSh_{\Lambda}(Z)=\la S\ra$ by Nadler's generation result~\cite{N}. By Thomason's localization theorem {\cite[Theorem 1.14]{Ne}}, we have
\begin{equation*}
\la R\ra^{\idem} = \la R\ra \cap \wSh_{\Lambda}(Z)
\end{equation*}
where $\la R\ra^{\idem}$ is the smallest thick subcategory containing $R$ and 
\begin{equation*}
\big(\lSh_{\Lambda}(Z)/\!\la R\ra\big)^c\simeq \wSh_{\Lambda}(Z)/\!\la R\ra\cap \wSh_{\Lambda}(Z).
\end{equation*}
where $(-)^c$ is the full subcategory spanned by compact objects (more precisely $\aleph_0$-compact). Combining these, we have  the following.
\begin{equation*}
\big(\lSh_{\Lambda}(Z)/\!\la R\ra\big)^c\simeq \wSh_{\Lambda}(Z)/\!\la R\ra^{\idem}
\end{equation*}

To prove the next lemma, we recall some facts. 
\begin{definition}[Bousfield localization]
Let $\cT$ be a triangulated category and $\cS$ be a thick subcategory. The Bousfield localization functor for the pair $(\cT, \cS)$ is a right adjoint of the quotient functor $\cT\rightarrow \cT/\cS$.
\end{definition}

\begin{theorem}{}\cite[Theorem 9.1.16]{Ne}
The Bousfield localization functor is fully faithful.
\end{theorem}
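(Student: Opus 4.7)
The plan is to reduce the fully-faithfulness claim to a concrete check using a standard adjunction fact. Writing $L\colon\cT\to\cT/\cS$ for the Verdier quotient and $R$ for its given right adjoint, $R$ is fully faithful if and only if the counit $\epsilon\colon LR\to\id_{\cT/\cS}$ is a natural isomorphism. The task therefore reduces to producing, for each $X\in\cT/\cS$, a natural isomorphism $LR(X)\xrightarrow{\sim}X$ which the triangle identities will then force to coincide with~$\epsilon_X$.

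First I would verify that each object $R(X)$ is \emph{$\cS$-local} in~$\cT$, meaning $\Hom_\cT(S,R(X))=0$ for every $S\in\cS$. This is immediate from the adjunction, since
\[
\Hom_\cT(S,R(X))\simeq\Hom_{\cT/\cS}(L(S),X)=\Hom_{\cT/\cS}(0,X)=0.
\]

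The key step, which I expect to be the main technical point, is an orthogonality lemma: for any $Y\in\cT$ and any $\cS$-local $Z\in\cT$, the canonical map $\Hom_\cT(Y,Z)\to\Hom_{\cT/\cS}(L(Y),L(Z))$ is an isomorphism. I would prove this via Verdier's calculus of fractions. A morphism $L(Y)\to L(Z)$ is represented by a roof $Y\xleftarrow{s}Y'\xrightarrow{f}Z$ with $\mathrm{Cone}(s)\in\cS$; applying $\Hom_\cT(-,Z)$ to the triangle $Y'\xrightarrow{s}Y\to\mathrm{Cone}(s)\to Y'[1]$ and using that both $\mathrm{Cone}(s)$ and $\mathrm{Cone}(s)[-1]$ lie in $\cS$ (closure of $\cS$ under shifts), the long exact sequence forces $s^\ast\colon\Hom_\cT(Y,Z)\to\Hom_\cT(Y',Z)$ to be a bijection. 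Hence $f$ lifts uniquely to some $\tilde f\colon Y\to Z$, and the verification that $\tilde f$ depends only on the equivalence class of the roof is routine.

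Finally, combining the orthogonality lemma (applied with $Z=R(X)$) with the adjunction yields, naturally in $Y\in\cT$,
\[
\Hom_{\cT/\cS}(L(Y),LR(X))\,\simeq\,\Hom_\cT(Y,R(X))\,\simeq\,\Hom_{\cT/\cS}(L(Y),X).
\]
Essential surjectivity of $L$ promotes this to a natural isomorphism of representable presheaves on $\cT/\cS$, so the Yoneda lemma delivers a canonical isomorphism $LR(X)\xrightarrow{\sim}X$; tracing through the triangle identities identifies it with the counit $\epsilon_X$, so $\epsilon$ is invertible and $R$ is fully faithful. The delicate ingredient throughout is the calculus-of-fractions description of morphisms in $\cT/\cS$, but this is standard once the quotient exists as a triangulated category.
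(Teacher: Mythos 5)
The paper does not prove this statement; it simply cites it as \cite[Theorem~9.1.16]{Ne} for later use, so there is no in-paper argument to compare against. Your proposal is a correct, self-contained proof and it follows the standard route: reduce full faithfulness of the right adjoint $R$ to invertibility of the counit; observe that $R(X)$ is $\cS$-local via the adjunction; prove that for $\cS$-local $Z$ the map $\Hom_\cT(Y,Z)\to\Hom_{\cT/\cS}(L(Y),L(Z))$ is bijective using the calculus of fractions and the long exact sequence for the triangle on $s$; and then conclude by Yoneda since $L$ is essentially surjective. Two small points deserve to be made explicit in a full write-up. First, the ``routine'' verification is doing real work: well-definedness of the lift on equivalence classes of roofs together with the observation that $(s,f)\sim(\id,\tilde f)$ is what makes your constructed map a two-sided inverse, giving both surjectivity and injectivity at once. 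Second, there is a latent set-theoretic issue with Verdier quotients (Hom-classes need not be small a priori); here it is dispelled precisely because $R$ is assumed to exist, so $\Hom_{\cT/\cS}(L(Y),X)\cong\Hom_\cT(Y,R(X))$ is a set and the fraction calculus applies without trouble. Finally, your closing remark about tracing through the triangle identities is exactly right: the composite isomorphism $\Hom_{\cT/\cS}(L(Y),LR(X))\to\Hom_{\cT/\cS}(L(Y),X)$ is literally post-composition with $\epsilon_X$, so Yoneda gives invertibility of the counit directly.
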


Now we would like to prove the following:
\begin{lemma}
\[
\lSh_{\Lambda}(Z)/\!\la R\ra \simeq \lSh_{\Lambda'}(Z).
\]
\end{lemma}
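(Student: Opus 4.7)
The plan is to use the left adjoint $\iota^{\Ladj}$ already available from the reflective inclusion $\iota\colon\lSh_{\Lambda'}(Z)\hookrightarrow\lSh_{\Lambda}(Z)$, and to identify it with the Verdier localization functor. First I observe that $\iota^{\Ladj}$ annihilates every object of $R$: if $r\in R$ is a microlocal skyscraper at a point $(x,\xi)\in\Lambda\setminus\Lambda'$ and $\cF\in\lSh_{\Lambda'}(Z)$, then $\Hom(\iota^{\Ladj}r,\cF)\simeq\Hom(r,\iota\cF)=0$ because $\SS(\iota\cF)\subset\Lambda'$ does not meet $(x,\xi)$. Hence $\iota^{\Ladj}$ kills $\la R\ra$ and, by the universal property of the Verdier quotient, induces a functor
\[
\overline{\iota^{\Ladj}}\colon\lSh_{\Lambda}(Z)/\!\la R\ra\longrightarrow\lSh_{\Lambda'}(Z).
\]
Essential surjectivity is immediate: for $\cF\in\lSh_{\Lambda'}(Z)$ the counit $\iota^{\Ladj}\iota\cF\simeq\cF$ shows $\cF$ is the image of the class of $\iota\cF$.

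For fully faithfulness I would invoke the Bousfield localization theorem recalled just before the lemma. Let $Q\colon\lSh_{\Lambda}(Z)\to\lSh_{\Lambda}(Z)/\!\la R\ra$ be the quotient and $r$ its fully faithful right adjoint. Then for any $\cE,\cF\in\lSh_{\Lambda}(Z)$,
\[
\Hom_{\lSh_{\Lambda}/\!\la R\ra}(Q\cE,Q\cF)\simeq\Hom_{\lSh_{\Lambda}}(\cE,rQ\cF).
\]
It therefore suffices to show that the unit map $\eta_\cF\colon\cF\to\iota\iota^{\Ladj}\cF$ exhibits $\iota\iota^{\Ladj}\cF$ as $rQ\cF$, i.e.\ that $\iota\iota^{\Ladj}\cF$ is right-orthogonal to $\la R\ra$ and that $\mathrm{cone}(\eta_\cF)\in\la R\ra$. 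Right-orthogonality is formal: for $\cC\in\la R\ra$ we have $\Hom(\cC,\iota\iota^{\Ladj}\cF)\simeq\Hom(\iota^{\Ladj}\cC,\iota^{\Ladj}\cF)=0$ since $\iota^{\Ladj}\cC=0$.

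The main obstacle is the cone condition, which amounts to the identification $\ker(\iota^{\Ladj})=\la R\ra$. Applying $\iota^{\Ladj}$ to the triangle defining the cone, one checks $\iota^{\Ladj}\,\mathrm{cone}(\eta_\cF)=0$, so $\mathrm{cone}(\eta_\cF)\in\ker(\iota^{\Ladj})$ and the containment $\la R\ra\subset\ker(\iota^{\Ladj})$ is already known. For the reverse containment I would argue via Nadler's generation result~\cite{N}: the localizing subcategory generated by $S=R\cup(S\setminus R)$ is all of $\lSh_{\Lambda}(Z)$, while $\iota^{\Ladj}$ kills $R$ and sends $S\setminus R$ to a generating set of $\lSh_{\Lambda'}(Z)$. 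Together with the semiorthogonal decomposition $\lSh_{\Lambda}(Z)=\la\ker(\iota^{\Ladj}),\,\iota(\lSh_{\Lambda'}(Z))\ra$ supplied by the reflective structure, this forces $\ker(\iota^{\Ladj})$ to be the localizing subcategory generated by $R$, which is $\la R\ra$. With this identification in hand, the cone condition holds and $\overline{\iota^{\Ladj}}$ is an equivalence.
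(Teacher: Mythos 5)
Your proposal takes a genuinely different route from the paper: you work with the left adjoint $\iota^{\Ladj}$ and the induced functor $\overline{\iota^{\Ladj}}$, so essential surjectivity is immediate but all the difficulty is pushed onto fully faithfulness. The paper instead works with the Bousfield \emph{right} adjoint $\catquot_1^{\Radj}$ of the quotient, which is automatically fully faithful, and then establishes essential surjectivity by a series of adjunction computations. Your argument has a genuine gap at exactly the step you flag as the main obstacle.

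The gap is the identification $\ker(\iota^{\Ladj})=\la R\ra$. You assert that Nadler's generation result together with the reflective semiorthogonal decomposition forces this, but it does not: generation of $\lSh_\Lambda(Z)$ by $S$ only tells you that $\ker(\iota^{\Ladj})$ is generated by the projections of $S$ onto it, that is by $R$ together with the cones of the unit maps $\eta_s\colon s\to\iota\iota^{\Ladj}s$ for $s\in S\setminus R$; showing that those cones already lie in $\la R\ra$ is precisely the cone condition you set out to prove, so the argument is circular. What actually closes the gap is the converse microlocal characterization, which the paper invokes with the phrase ``by the definition of microlocal skyscraper sheaves'': if $\Hom(r,\cE)=0$ for every $r\in R$ then $\SS(\cE)$ avoids $\Lambda\setminus\Lambda'$, hence $\cE\in\iota(\lSh_{\Lambda'}(Z))$. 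This gives $\la R\ra^{\perp}\subset\iota(\lSh_{\Lambda'}(Z))$, and together with the formal reverse inclusion you already established it yields $\la R\ra^{\perp}=\iota(\lSh_{\Lambda'}(Z))$; the identity $\ker(\iota^{\Ladj})=\la R\ra$ then follows by applying the Bousfield localization triangle to an arbitrary $X\in\ker(\iota^{\Ladj})$. You used the ``easy'' direction of the microlocal characterization when checking that $\iota^{\Ladj}$ kills $R$, but the converse direction is the essential input and is missing from your argument.
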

\begin{proof}
Again, general nonsense tells us that there exists a right adjoint $\catquot_1^{\Radj}$ of the quotient $\catquot_1\colon \lSh_\Lambda(Z)\rightarrow \lSh_\Lambda(Z)/\!\la R\ra$ (cf. \cite[Example 8.4.5]{Ne}). Hence this is a Bousfield localization and hence fully faithful. For an object $\cE\in  \lSh_\Lambda(Z)/\!\la R\ra$ and an $r\in R$, we have 
\[
\begin{split}
\Hom_{\lSh_\Lambda(Z)}(r, \catquot_1^{\Radj}(\cE))&\simeq \Hom_{\lSh_\Lambda(Z)/\la R\ra}(\catquot_1(r), \cE)\simeq 0.
\end{split}
\]
Hence $\SS(\catquot_1^{\Radj}(\cE))\subset \Lambda'$ by the definition of microlocal skyscraper sheaves. So we have a fully faithful functor $\widetilde{\catquot_1^{\Radj}}\colon \lSh_\Lambda(Z)/\!\la R\ra\rightarrow \lSh_{\Lambda'}(Z)$ such that $\catquot_1^{\Radj}=\iota\circ \widetilde{\catquot_1^{\Radj}}$. We would like to see that this is essentially surjective. 

Let us take $\cF\in\lSh_{\Lambda'}(Z)$. It is enough to prove $\widetilde{\catquot_1^{\Radj}}\circ \catquot_1\circ \iota (\cF)\simeq \cF$ for our purpose. Letting $\catquot_2$ be the functor $\lSh_{\Lambda}(Z)/\!\la R\ra\rightarrow \lSh_{\Lambda'}(Z)$ induced by~$\iota^{\Ladj}$, which satisfies $\catquot_2\circ \catquot_1= \iota^{\Ladj}$.

First, let us see $\pi_2$ is the left adjoint of $\pi_1\circ \iota$. Let us consider an object $\pi_1(\cG)\in \lSh_\Lambda(Z)/ \la R\ra$ which is represented by $\cG\in \lSh_\Lambda(Z)$. We have
\begin{equation*}
\begin{split}
\Hom_{\lSh_{\Lambda'}(Z)}(\pi_2\circ\pi_1(\cG), \cF) &\simeq \Hom_{\lSh_{\Lambda'}(Z)}(\iota^{\Ladj}(\cG),\cF)\\
&\simeq \Hom_{\lSh_{\Lambda}(Z)}(\cG, \iota(\cF))\\
&\simeq \Hom_{\lSh_{\Lambda}(Z)/\la R\ra}(\pi_1(\cG), \pi_1\circ \iota(\cF)).
\end{split}
\end{equation*}
For the last equality, we used the fact that $\la R\ra$ is in the left orthogonal of~$\lSh_{\Lambda'}(Z)$. Next, we can see $\pi_1\circ \iota$ is fully faithful because of the following.
\begin{equation*}
\begin{split}
\Hom_{\lSh_{\Lambda}(Z)/\la R\ra}(\pi_1\circ \iota(\cF'), \pi_1\circ \iota(\cF))&\simeq \Hom_{\lSh_{\Lambda'}(Z)}(\pi_2\circ \pi_1\circ \iota(\cF'), \cF)\\
&\simeq \Hom_{\lSh_{\Lambda'}(Z)}(\iota^{\Ladj}\circ \iota(\cF'), \cF)\\
&\simeq \Hom_{\lSh_{\Lambda'}(Z)}(\cF', \cF)
\end{split}
\end{equation*}
Finally, we have
\begin{equation*}
\begin{split}
\Hom_{\lSh_{\Lambda'}(Z)}\big(\cF', \widetilde{\catquot_1^{\Radj}}\circ \catquot_1\circ \iota (\cF)\big)&\simeq \Hom_{\lSh_{\Lambda}(Z)}(\iota(\cF'), \catquot_1^{\Radj}\circ \catquot_1\circ \iota(\cF))\\
&\simeq \Hom_{\lSh_\Lambda(Z)/\la R\ra}(\catquot_1\circ \iota (\cF'), \catquot_1\circ \iota(\cF))\\
&\simeq \Hom_{\lSh_{\Lambda'}(Z)}(\cF',\cF).
\end{split}
\end{equation*}
Yoneda then completes the proof.
\end{proof}

\begin{lemma}
The functor $\catquot_2$ is an equivalence.
\end{lemma}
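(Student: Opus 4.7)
The plan is to leverage the previous lemma directly. That lemma established an equivalence $\lSh_\Lambda(Z)/\la R\ra \simeq \lSh_{\Lambda'}(Z)$, realized by the fully faithful right-adjoint $\widetilde{\catquot_1^{\Radj}}$. Crucially, within that proof it was shown that $\catquot_1\compose \iota$ is fully faithful and satisfies $\widetilde{\catquot_1^{\Radj}}\compose \catquot_1 \compose \iota \simeq \id$, so $\catquot_1\compose \iota$ is itself an equivalence (being a one-sided inverse to a fully faithful functor between categories already known to be equivalent).

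Next, I would recall the adjunction observed at the start of that same proof: namely that $\catquot_2$ is a left adjoint to $\catquot_1\compose \iota$, via the chain
\[
\Hom_{\lSh_{\Lambda'}(Z)}\!\big(\catquot_2 \catquot_1(\cG),\cF\big)
\simeq \Hom_{\lSh_\Lambda(Z)}\!\big(\cG,\iota(\cF)\big)
\simeq \Hom_{\lSh_\Lambda(Z)/\la R\ra}\!\big(\catquot_1(\cG),\catquot_1 \iota(\cF)\big),
\]
where the second isomorphism uses that $\la R\ra$ lies in the left orthogonal of $\iota \lSh_{\Lambda'}(Z)$.

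Since any left adjoint to an equivalence is a quasi-inverse to that equivalence, and hence itself an equivalence, I can conclude that $\catquot_2$ is an equivalence. The unit and counit of the adjunction are automatically natural isomorphisms because their target/source functor $\catquot_1 \compose \iota$ is fully faithful and essentially surjective.

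I do not expect any real obstacle here: the lemma is essentially a formal consequence of the preceding lemma together with the uniqueness of adjoints. The only thing to be careful about is to verify the orthogonality $\Hom(r,\iota(\cF))=0$ for $r\in R$, which is immediate from the definition of microlocal skyscraper sheaves with microsupport in $\Lambda\setminus\Lambda'$, so that the adjunction identity used above genuinely holds.
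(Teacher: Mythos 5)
Your proof is correct and follows essentially the same route as the paper: you observe that $\catquot_1\circ\iota$ is an equivalence (being fully faithful with $\widetilde{\catquot_1^{\Radj}}\circ\catquot_1\circ\iota\simeq\id$), recall the adjunction $\catquot_2\dashv\catquot_1\circ\iota$ from the preceding lemma's proof, and conclude that a left adjoint to an equivalence is an equivalence. This is precisely the paper's argument, which states the same facts and concludes $\catquot_2=\widetilde{\catquot_1^{\Radj}}$.
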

\begin{proof}
From the proof above, we also see that $\catquot_1\circ \iota \colon \lSh_{\Lambda'}(Z)\rightarrow \lSh_{\Lambda}(Z)/\!\la R\ra$ is an equivalence. Since $\pi_2$ is the left adjoint of $\pi_1\circ \iota$, we actually have \[\pi_2=\widetilde{\pi_1^{\Radj}}.\qedhere\]
\end{proof}
\begin{corollary}
The functor $\catquot_2$ induces an equivalence as follows.
\begin{equation*}
\wSh_{\Lambda'}(Z)\simeq \wSh_{\Lambda}(Z)/\!\la R\ra^{\idem}
\end{equation*}
\end{corollary}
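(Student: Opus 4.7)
The plan is to deduce the corollary directly from the preceding lemma by passing to compact objects on both sides of the equivalence
\[
\catquot_2 \colon \lSh_{\Lambda}(Z)/\!\la R\ra \xrightarrow{\,\sim\,} \lSh_{\Lambda'}(Z).
\]
Since equivalences of (presentable, triangulated) dg-categories automatically preserve and reflect compact objects, it suffices to identify the compact objects on each side.

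On the right-hand side, the compact objects of $\lSh_{\Lambda'}(Z)$ are precisely $\wSh_{\Lambda'}(Z)$, by the definition of the wrapped constructible category recalled in Section~\ref{section microlocal fukaya}. On the left-hand side, the identification
\[
\bigl(\lSh_{\Lambda}(Z)/\!\la R\ra\bigr)^c \simeq \wSh_{\Lambda}(Z)/\!\la R\ra^{\idem}
\]
was already established in the discussion preceding the two lemmas, as a direct consequence of Thomason's localization theorem~\cite[Theorem~1.14]{Ne}. Combining the two descriptions, the restriction of $\catquot_2$ to compact objects yields the desired equivalence.

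The only thing to check is that this restriction is really induced by $\catquot_2$ in the sense claimed. For this I would observe that $\catquot_2$ is defined as the functor induced from $\iota^{\Ladj}$ on the quotient, and that $\iota^{\Ladj}$ sends $\wSh_\Lambda(Z)$ to $\wSh_{\Lambda'}(Z)$ by Proposition~\ref{proposition left adjoint}. Hence $\catquot_2$ carries $\wSh_{\Lambda}(Z)/\!\la R\ra^{\idem}$ into $\wSh_{\Lambda'}(Z)$, and since its global version is an equivalence, so is this restriction.

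There is no substantial obstacle here; the statement is a formal consequence of the previous lemma together with the Thomason identification. The only minor point requiring a line of care is checking that the restricted functor agrees with what one gets from the compact-object subcategories of the ambient equivalence, but this is immediate from the construction of $\catquot_2$ via the universal property of the Verdier quotient.
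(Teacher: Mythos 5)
Your proof is correct and matches the (implicit) argument in the paper: the corollary is a direct consequence of the preceding lemma (that $\catquot_2 \colon \lSh_{\Lambda}(Z)/\!\la R\ra \to \lSh_{\Lambda'}(Z)$ is an equivalence), the Thomason identification $(\lSh_{\Lambda}(Z)/\!\la R\ra)^c \simeq \wSh_{\Lambda}(Z)/\!\la R\ra^{\idem}$ already recorded in the text, and the fact that $\wSh_{\Lambda'}(Z) = (\lSh_{\Lambda'}(Z))^c$ by definition, so passing to compact objects on both sides yields the statement. Your extra remark that $\catquot_2$ genuinely carries $\wSh_{\Lambda}(Z)/\!\la R\ra^{\idem}$ into $\wSh_{\Lambda'}(Z)$ (via Proposition~\ref{proposition left adjoint}) is a sensible sanity check but not strictly necessary, since an equivalence automatically preserves and reflects compactness.
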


Let us go back to our situation. We now have a diagram.
\[
\begin{tikzpicture}[xscale=1,scale=1.75]

\node (Aminus) at (-1,2) {$ \big\langle S_{\Lambda_B}(\Lambda_B\bs \Lambda_+)\big\rangle^{\idem} $};
\node (Aplus) at (-2,1) {$ \big\langle S_{\Lambda_-}(\Lambda_-\bs \bigcap\!\Lambda_\pm)\big\rangle^{\idem} $};
\node (Aphantom) at (2,1) {$ \phantom{\big\langle \Lambda_-\bs (\bigcap\!\Lambda_\pm)\big\rangle^{\idem}} $};

\node (Bminus) at (0,1) {$ \wSh_{\Lambda_B}(T) $};
\node (Bplus) at (1,0) {$\wSh_{\Lambda_+}(T) $};

\node (Cminus) at (-1,0) {$ \wSh_{\Lambda_-}(T) $};
\node (Cplus) at (0,-1) {$ \wSh_{\bigcap\!\Lambda_\pm}(T) $};

\draw[->] (Aminus) to  node[above left] {\scriptsize $\reflectbox{\it{h}}$} (Aplus);
\draw[->] (Aminus) to (Bminus);
\draw[->] (Aplus) to (Cminus);

\draw[->] (Bminus) to node[above right] {\scriptsize $\mpr_{+!}$}  (Bplus);
\draw[->] (Cminus) to node[pos=\labelAdjust,below left,transform canvas={xshift=8pt}] {\scriptsize $\reflectbox{\it{f}}_{-!}$} (Cplus);

\draw[<-] (Cminus) to node[above left] {\scriptsize $\mpr_{-!}$}  (Bminus);
\draw[<-] (Cplus) to node[pos=1-\labelAdjust,below right] {\scriptsize $\reflectbox{\it{f}}_{+!}$} (Bplus);

\end{tikzpicture}
\]
The two sequences from upper left to lower right are Verdier--Drinfeld quotients by Corollary 6.5. The functor $\reflectbox{\it{h}}$ is the restriction of $\mpr_{-!}$ to the uppermost category. The functors $\,\reflectbox{\it{f}}_{\pm!}$ are the restrictions of the left adjoints of the inclusions $\wSh_{\bigcap\!\Lambda_\pm}(T)\subset \wSh_{\Lambda_\pm}(T)$.

\begin{lemma}
The category $\big\langle \Lambda_-\bs \bigcap\!\Lambda_\pm \big\rangle^{\idem}$ is split-generated by the image of $\reflectbox{\it{h}}$.
\end{lemma}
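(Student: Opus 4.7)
The plan is to exhibit, for each microlocal skyscraper generator of the target category, an explicit preimage under $\reflectbox{\it{h}}$. First I would observe the containment of skeleta: since $\Sigma_B$ refines both $\Sigma_\pm$ (by the construction of Section~\ref{section threefold geometry}), we have $\Lambda_\pm\subset \Lambda_B$, and consequently
\[\Lambda_-\bs\textstyle\bigcap\Lambda_\pm \;=\; \Lambda_-\bs(\Lambda_-\cap\Lambda_+) \;=\; \Lambda_-\bs\Lambda_+ \;\subset\; \Lambda_B\bs\Lambda_+.\]
Thus any point $(x,\xi)\in\Lambda_-\bs\bigcap\Lambda_\pm$ also lies in $\Lambda_B\bs\Lambda_+$, and one may form microlocal skyscraper sheaves at $(x,\xi)$ in both $\lSh_{\Lambda_-}(T)$ and $\lSh_{\Lambda_B}(T)$; write these $\microsky^-_{x,\xi}$ and $\microsky^B_{x,\xi}$ respectively. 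By construction $\microsky^B_{x,\xi}$ is a generator of $\la S_{\Lambda_B}(\Lambda_B\bs\Lambda_+)\ra^{\idem}$, while $\microsky^-_{x,\xi}$ is a generator of the target.

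The key step is then to prove $\reflectbox{\it{h}}(\microsky^B_{x,\xi})\cong \microsky^-_{x,\xi}$. By definition $\reflectbox{\it{h}}$ is the restriction of the left adjoint $\mpr_{-!}$ of $\iota\colon\wSh_{\Lambda_-}(T)\hookrightarrow\wSh_{\Lambda_B}(T)$. For any $G\in\wSh_{\Lambda_-}(T)$ the microstalk of $\iota(G)$ at $(x,\xi)$ agrees with that of $G$, since the microstalk is computed locally near $(x,\xi)$ and does not depend on the ambient skeleton. Using the defining corepresenting property of microlocal skyscrapers, the adjunction then gives
\[\Hom(\reflectbox{\it{h}}(\microsky^B_{x,\xi}),G) \,\cong\, \Hom(\microsky^B_{x,\xi},\iota G) \,\cong\, (\text{microstalk of $G$ at $(x,\xi)$}) \,\cong\, \Hom(\microsky^-_{x,\xi},G),\]
and Yoneda yields the desired isomorphism. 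As a sanity check, the same adjunction argument shows that for any $(x,\xi)\in(\Lambda_B\bs\Lambda_+)\bs\Lambda_-$ one has $\mpr_{-!}(\microsky^B_{x,\xi})=0$, since microstalks of objects with microsupport in $\Lambda_-$ vanish off $\Lambda_-$; so $\reflectbox{\it{h}}$ produces no spurious objects outside the target.

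Finally, since the target $\la S_{\Lambda_-}(\Lambda_-\bs\bigcap\Lambda_\pm)\ra^{\idem}$ is by definition the thick idempotent-complete triangulated hull of the generators $\microsky^-_{x,\xi}$, and each of these generators lies in the image of $\reflectbox{\it{h}}$ by the previous step, it follows that $\reflectbox{\it{h}}$ split-generates. The main technical point to verify carefully is the locality of the microstalk under a change of ambient skeleton; this is a standard fact in microlocal sheaf theory (see \cite{sheaveson}), and uses only that $(x,\xi)$ be a smooth Legendrian point at which microlocal skyscrapers are defined, which holds throughout the loci considered here.
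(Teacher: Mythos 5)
Your proof is correct and follows essentially the same route as the paper. The paper's proof is very short: it cites Nadler~\cite{N} for the two facts that $\mpr_{-!}$ sends a microlocal skyscraper over a point of $\Lambda_-$ to a microlocal skyscraper over the same point in $\wSh_{\Lambda_-}(T)$, and sends a microlocal skyscraper over a point of $\Lambda_B\setminus\Lambda_-$ to zero; together with $\Lambda_B\setminus\Lambda_+ \supset \Lambda_-\setminus\bigcap\Lambda_\pm$ these give well-definedness and surjectivity on split generators. Your adjunction-plus-microstalk-locality argument is precisely the content behind that citation: you unpack $\Hom(\mpr_{-!}(\microsky^B_{x,\xi}),G)\simeq\Hom(\microsky^B_{x,\xi},\iota G)$ and appeal to the fact that the microstalk at a smooth Legendrian point is a local invariant, independent of the ambient conic Lagrangian used to cut out the sheaf category. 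The only thing worth flagging is that this locality argument implicitly requires $(x,\xi)$ to be a smooth Legendrian point of both $\Lambda_-$ and $\Lambda_B$, so that the corepresenting objects are well defined and, after matching normalizations/shifts, actually corepresent the same functor; that caveat is harmless here because such points are dense in $\Lambda_-\setminus\bigcap\Lambda_\pm$ and generic generators suffice for split generation, but it is the technical content that the paper delegates to~\cite{N}. Otherwise your argument and the paper's are the same.
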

\begin{proof}
Note that $\mpr_{-!}$ takes a microlocal skyscraper sheaf in $\wSh_{\Lambda_B}(T)$ over a point in $\Lambda_-$ to a microlocal skyscraper sheaf in $\wSh_{\Lambda_-}(T)$ over the same point in $\Lambda_-$~\cite{N}. Note also that $\mpr_{-!}$ takes a microlocal skyscraper sheaf in $\wSh_{\Lambda_B}(T)$ over a point in $\Lambda_B\bs \Lambda_{-}$ to zero~\cite{N}. These imply the well-definedness of $\reflectbox{\it{h}}$. Since $\Lambda_B\bs \Lambda_+\supset \Lambda_-\bs \bigcap\!\Lambda_\pm$, these also imply the surjectivity of $\reflectbox{\it{h}}$ on the split generators.
\end{proof}

Repeating the logic presented in the beginning of the section, we can conclude
\[
\begin{tikzpicture}[xscale=1,scale=1.5]

\node (Bminus) at (0,1) {$ \wSh_{\Lambda_B}(T) $};
\node (Bplus) at (1,0) {$\wSh_{\Lambda_+}(T) $};

\node (Cminus) at (-1,0) {$ \wSh_{\Lambda_-}(T) $};
\node (Cplus) at (0,-1) {$ \wSh_{\bigcap\!\Lambda_\pm}(T) $};

\draw[->] (Bminus) to node[above right] {\scriptsize $\mpr_{+!}$}  (Bplus);
\draw[->] (Cminus) to node[pos=\labelAdjust,below left,transform canvas={xshift=8pt}] {\scriptsize $\reflectbox{\it{f}}_{-!}$} (Cplus);

\draw[<-] (Cminus) to node[above left] {\scriptsize $\mpr_{-!}$}  (Bminus);
\draw[<-] (Cplus) to node[pos=1-\labelAdjust,below right] {\scriptsize $\reflectbox{\it{f}}_{+!}$} (Bplus);

\end{tikzpicture}
\]
is a homotopy push-out in the Morita model. Noting that $(\bigcap\!\Lambda_\pm)^\infty=\bigcap\!\Lambda_\pm^\infty$ by the definition of taking infinity, Theorem \ref{theorem GPS} takes this diagram to another push-out diagram.
\[
\begin{tikzpicture}[xscale=1,scale=1.5]

\node (Bminus) at (0,1) {$ \cW_{\Lambda_B^\infty}(\Omega_T) $};
\node (Bplus) at (1,0) {$\cW_{\Lambda^\infty_+}(\Omega_T) $};

\node (Cminus) at (-1,0) {$ \cW_{\Lambda^\infty_-}(\Omega_T) $};
\node (Cplus) at (0,-1) {$ \cW_{\bigcap\!\Lambda_\pm^\infty}(\Omega_T) $};

\draw[->] (Bminus) to node[above right] {\scriptsize $\mpr_{+*}$}  (Bplus);
\draw[->] (Cminus) to node[pos=\labelAdjust,below left,transform canvas={xshift=8pt}] {\scriptsize $\reflectbox{\it{f}}_{-*}$} (Cplus);

\draw[<-] (Cminus) to node[above left] {\scriptsize $\mpr_{-*}$}  (Bminus);
\draw[<-] (Cplus) to node[pos=1-\labelAdjust,below right] {\scriptsize $\reflectbox{\it{f}}_{+*}$} (Bplus);

\end{tikzpicture}
\]
Here $\,\reflectbox{\it{f}}_{\pm*}$ is defined as the composition of the functor in Theorem~\ref{theorem GPS} and~$\,\reflectbox{\it{f}}_{\pm!}$. This implies Proposition~\ref{proposition a-side cohom}.

On the other hand, Theorem~\ref{theorem flober equiv} implies that $\bH^2_c(\Delta, \cP_A)\simeq \bH^2_c(\Delta, \cP_B)$. Combining with Proposition~\ref{proposition a-side cohom}, we have homological mirror symmetry for singular varieties
\begin{corollary}\label{cor sing equiv}
\[
D(X_0)\simeq \cW_{\bigcap\!\Lambda_\pm^\infty}(\Omega_T).
\]
\end{corollary}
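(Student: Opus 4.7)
My plan proceeds by chaining together three facts already established in the excerpt. First, I would invoke the computation of \BKS{} (Proposition~2.12 of~\cite{BonKapSch}) showing that $\bH^2_c(\Delta,\cP_B)\simeq D(X_0)$, where the push-out on the B-side is built from the right-adjoint functors $\pr_{\pm*}$ appearing inside the flober $\cP_B$. Second, I would appeal to the A-side calculation of Proposition~\ref{proposition a-side cohom}, which identifies $\bH^2_c(\Delta,\cP_A)\simeq \cW_{\bigcap\!\Lambda_\pm^\infty}(\Omega_T)$.

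The pivotal third step is to apply the mirror equivalence of flobers $\cP_A\simeq \cP_B$ provided by Theorem~\ref{theorem flober equiv}. The key observation is that $\bH^2_c$ is manifestly a Morita invariant of a flober: it is constructed as a homotopy push-out from the intrinsic categorical data of the embeddings $\delta_\pm$ together with their right adjoints $\delta_\pm^{\Radj}$, and any isomorphism of flobers by definition intertwines this data. Consequently $\bH^2_c(\Delta,\cP_A)\simeq \bH^2_c(\Delta,\cP_B)$, and concatenating with the identifications above produces the desired equivalence $D(X_0)\simeq \cW_{\bigcap\!\Lambda_\pm^\infty}(\Omega_T)$.

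The only point requiring scrutiny --- more a sanity check than an obstacle --- is whether the mirror equivalence of Theorem~\ref{theorem flober equiv} genuinely intertwines the right-adjoint arrows $\pr_{\pm*}$ with $\mpr_{\pm*}$ that appear in the push-out diagrams, not merely the embeddings $\pr_\pm^*$ with $\mpr_\pm^*$. This is automatic: the proof of Theorem~\ref{theorem flober equiv} already produces commutative squares intertwining the embeddings with vertical equivalences, so uniqueness of adjoints forces compatibility with right adjoints as well. The corollary is therefore purely formal once the prior results are in place, and can be stated in essentially one line.
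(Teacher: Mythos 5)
Your argument is exactly the one the paper uses: combine the \BKS{} computation $\bH^2_c(\Delta,\cP_B)\simeq D(X_0)$, Proposition~\ref{proposition a-side cohom} giving $\bH^2_c(\Delta,\cP_A)\simeq \cW_{\bigcap\!\Lambda_\pm^\infty}(\Omega_T)$, and the mirror equivalence of flobers from Theorem~\ref{theorem flober equiv} to identify the two cohomologies. Your sanity check about the right adjoints being intertwined (via uniqueness of adjoints) is correct and is indeed what makes the Morita invariance of $\bH^2_c$ applicable.
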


\begin{remark}
In this section, we used flobers to study the compact cohomology rather than spherical pairs. If one instead uses the spherical pairs from Theorem~\ref{theorem schober equivalence}, then one will arrive at the same conclusion as presented here, i.e.~the flobers and spherical pairs have the same 2nd~compact cohomology.
\end{remark}

\begin{remark}
The method presented here might be generalized to another general proof of the coherent-constructible correspondence for singular toric varieties, as proved by the second author in~\cite{K2}.

Let $X$ be a singular toric variety. Let $S_X$ be the category consisting of 
\begin{enumerate}
\item Object: smooth toric Deligne--Mumford stack refining $X$.
\item Morphism: morphism along refinement.
\end{enumerate}
There exists a functor $\cS_X$ from $S_X$ to the category of dg-categories defined by $X'\mapsto D(X')$ and morphisms are mapped to push-forwards along them. 
\begin{conjecture}
The universality morphism gives an equivalence
\begin{equation*}
\lim_{\substack{\longrightarrow\\ X'\in \cS_X}} D(X')\simeq D(X).
\end{equation*}
\end{conjecture}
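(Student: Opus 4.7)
The plan is to extend the method of Section~\ref{section sing HMS} from the three-term push-out to the filtered colimit over $\cS_X$, routing the argument through the A-side. First, the coherent-constructible correspondence for smooth toric DM stacks (Theorem~\ref{theorem coh-constr} and its stacky version) identifies each $D(X')$ for $X'\in\cS_X$ with $\wSh_{\Lambda_{X'}}(T)$, and Proposition~\ref{pullback} together with adjunction identifies each push-forward $p_*\colon D(X')\to D(X'')$ along a refinement morphism with the left adjoint functor associated to the inclusion $\Lambda_{X''}\subset\Lambda_{X'}$. This converts the coherent colimit to an A-side colimit
\[
\varinjlim_{X'\in\cS_X}\wSh_{\Lambda_{X'}}(T).
\]

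Next, I would compute this A-side colimit explicitly. The indexing category is filtered, since any two smooth toric DM refinements of $X$ admit a common smooth refinement by classical toric geometry, so the system of skeleta is directed by reverse inclusion. The key geometric observation is that $\bigcap_{X'\in \cS_X}\Lambda_{X'}=\Lambda_X$, the FLTZ skeleton of the singular fan of $X$ itself: finer subdivisions add FLTZ components, and the intersection is realized on the fan of $X$. Iterating the Bousfield/Thomason localization argument from Section~\ref{section sing HMS} (which established $\wSh_{\Lambda'}(Z)\simeq \wSh_\Lambda(Z)/\la R\ra^{\idem}$ for $\Lambda'\subset\Lambda$), and using that filtered colimits of compactly generated dg-categories commute with Drinfeld quotients in the Morita model, should yield
\[
\varinjlim_{X'\in\cS_X}\wSh_{\Lambda_{X'}}(T)\simeq\wSh_{\Lambda_X}(T).
\]

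To close the loop one needs an independent identification $\wSh_{\Lambda_X}(T)\simeq D(X)$. If the aim is merely to prove the conjecture as stated, this is supplied by the second author's singular coherent-constructible correspondence \cite{K2}. To realize the program suggested in the remark, namely obtaining a new proof of singular CCC, one must instead establish the universality morphism $F\colon\varinjlim_{X'\in\cS_X} D(X')\to D(X)$ directly by coherent methods: essential surjectivity via a toric resolution $f\colon X'\to X$ in $\cS_X$ together with the fact that $Rf_*\cO_{X'}=\cO_X$ and an analysis of generators of $D(X)$; fully faithfulness via a descent argument comparing $\Hom$-complexes across successive refinements.

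The main obstacle is precisely the coherent-side verification that $F$ is an equivalence. For a single resolution $f_*$ is generally neither faithful nor essentially surjective, so the filtered colimit must play an essential role, and controlling the interaction between the kernels of the various $f_{X',*}$ (in the spirit of BKS's Lemmas 2.14 and 2.18) across an infinite diagram — in particular showing that every element of $\ker f_{X',*}$ becomes trivial after passage to a sufficiently fine refinement — is the principal technical difficulty. A related subtlety is to check that the colimit is computed in the appropriate Morita-theoretic sense so that the compactness exploited in Section~\ref{section sing HMS} is preserved.
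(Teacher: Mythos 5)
This statement is labeled as a \emph{conjecture} in the paper: the remark that contains it explicitly says the authors have no proof, and the two conjectures together are offered as a \emph{program} that, if realized, would yield a new proof of the singular coherent--constructible correspondence. There is no paper proof to compare against, so I will evaluate your proposal on its own terms.

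Your strategy is the natural one, and it is consistent with the hint in the paper's remark, but as written it does not constitute a proof and in fact largely re-parametrizes the problem. Routing through the A-side and asserting $\varinjlim_{X'\in S_X}\wSh_{\Lambda_{X'}}(T)\simeq\wSh_{\Lambda_X}(T)$ is, word for word, the paper's \emph{second} conjecture: you have reduced Conjecture~1 to (Conjecture~2 $+$ the singular CCC of~\cite{K2}), not proved it. Two specific steps are asserted but not established. First, the set-theoretic identity $\bigcap_{X'}\Lambda_{X'}=\Lambda_X$ over \emph{all} smooth toric DM refinements: containment $\supset$ follows formally from $\Lambda_\Sigma\subset\Lambda_{\Sigma'}$ for a refinement, but the reverse inclusion requires showing that every point of $\Omega_T\setminus\Lambda_X$ is excluded from $\Lambda_{X'}$ for \emph{some} smooth refinement $X'$; in the surface and threefold examples the paper only verifies this for a specific finite pair of resolutions, which does not immediately globalize to a general singular $X$ and an infinite family of refinements. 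Second, the passage from the paper's finite Thomason/Drinfeld-quotient pushout (four vertices, explicit generating sets $R$ of microlocal skyscrapers) to a filtered colimit over $S_X$ is not a formal iteration: one must show the diagram of quotients is coherent and that the relevant compactly-generated (Morita) colimit commutes with the idempotent-completed quotients $\langle R\rangle^{\idem}$, which in the paper is organized via the specific split-generation Lemma~\ref{BKSLemma2} and has no stated analogue for an infinite directed system. You correctly flag that the purely B-side verification of the universality map (controlling $\ker f_{X',*}$ in the colimit) is the crux, but the A-side side of your argument has essentially the same unfinished status, so as it stands this is a plausible roadmap that reproduces the conjectural structure of the paper's remark rather than a proof.
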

One can define the A-model counterpart and conjecture
\begin{conjecture}
The universality morphism gives an equivalence
\begin{equation*}
\lim_{\substack{\longrightarrow\\ X'\in \cS_X}} \wSh_{\Lambda_{X'}}(T)\simeq \wSh_{\Lambda_X}(T).
\end{equation*}
\end{conjecture}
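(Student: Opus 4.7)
The plan is to construct a canonical comparison functor from the colimit to $\wSh_{\Lambda_X}(T)$ and verify it is an equivalence by exploiting the Verdier--Drinfeld quotient description from Section~\ref{section sing HMS}. For every $X' \in \cS_X$ one has $\Lambda_X \subset \Lambda_{X'}$, because each cone of the fan $\Sigma_X$ decomposes as a union of cones of $\Sigma_{X'}$ whose perps enlarge. Proposition~\ref{proposition left adjoint} then supplies essentially surjective left-adjoint functors $\iota^w_{X'}\colon \wSh_{\Lambda_{X'}}(T) \to \wSh_{\Lambda_X}(T)$, and standard adjunction arguments show these are compatible with the refinement morphisms of $\cS_X$ (compositions of left adjoints of successive inclusions are the left adjoint of the total inclusion). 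This data assembles into a cocone on the diagram $X' \mapsto \wSh_{\Lambda_{X'}}(T)$, producing the comparison functor $\Phi$ out of the colimit, and essential surjectivity of $\Phi$ is immediate from essential surjectivity of each $\iota^w_{X'}$.

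The technical core will be full faithfulness. The key input is that each $\iota^w_{X'}$ is a Bousfield localization: the arguments of Section~\ref{section sing HMS}, culminating in Corollary~6.5, identify
\begin{equation*}
\wSh_{\Lambda_X}(T) \simeq \wSh_{\Lambda_{X'}}(T)/\la R_{X'}\ra^{\idem},
\end{equation*}
where $R_{X'}$ is the thick subcategory generated by microlocal skyscrapers over $\Lambda_{X'}\setminus\Lambda_X$. In the Morita model, Hom-spaces in a filtered colimit of dg-categories are computed as filtered colimits of Hom-spaces of lifts, so for objects $\cE, \cF \in \wSh_{\Lambda_{X'}}(T)$ I would compute the filtered colimit over finer refinements $X''$ of the Hom-spaces between appropriate lifts, and compare with $\Hom_{\wSh_{\Lambda_X}(T)}(\Phi\cE,\Phi\cF)$ via the Bousfield formula $\Hom(\iota^w\cE,\iota^w\cF)\simeq \Hom(\cE,\iota\iota^w\cF)$. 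Compatibility of the $R_{X'}$ under refinement---the inclusion of a microlocal skyscraper on $\Lambda_{X'}\setminus\Lambda_X$ into $\wSh_{\Lambda_{X''}}(T)$ is built from skyscrapers on $\Lambda_{X''}\setminus\Lambda_X$---should make the two sides agree.

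The main obstacle will be managing the filtered colimit in the Morita model, since $\cS_X$ is neither sequential nor cofiltered and homotopy colimits of dg-categories require delicate model-categorical input. The natural reduction is to pass to a cofinal sequence of refinements---which exist by standard toric-combinatorial arguments on iterated simplicial subdivision---and show that the sequential colimit of the Bousfield localizations $\wSh_{\Lambda_{X'}}(T)/\la R_{X'}\ra^{\idem}$ coincides with the single localization $\wSh_{\Lambda_X}(T)$, by identifying the filtered union of localizing classes. A secondary step is the combinatorial identity $\Lambda_X = \bigcap_{X'\in\cS_X}\Lambda_{X'}$: the inclusion $\subset$ was observed above, while the reverse direction follows by showing any covector of $\Lambda_{X'}$ not already in $\Lambda_X$ can be excluded by a sufficiently fine smooth refinement. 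Together with the B-side Conjecture above, a proof of this conjecture would give a new derivation of the coherent--constructible correspondence for singular toric varieties purely from the smooth case of Theorem~\ref{theorem coh-constr}.
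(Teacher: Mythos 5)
The statement you are addressing is labeled a \emph{Conjecture} in the paper; no proof is offered there, so there is nothing to compare your attempt against. Your two structural observations are the right ones to make: each $\iota^w_{X'}\colon\wSh_{\Lambda_{X'}}(T)\to\wSh_{\Lambda_X}(T)$ is a Verdier--Drinfeld quotient by the thick hull of microlocal skyscrapers over $\Lambda_{X'}\setminus\Lambda_X$, by the general lemmas of Section~\ref{section sing HMS}, and these functors assemble into a cocone because left adjoints of composable inclusions compose. Essential surjectivity of the comparison functor~$\Phi$ is then indeed clear.

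The core technical step, however, does not go through as stated, because $S_X$ is \emph{not} filtered. For two incomparable smooth refinements --- e.g.\ the two small resolutions $X_\pm$ of the conifold --- there is in general no smooth refinement coarser than both, so axiom~(ii) of filteredness fails. (Contrary to your assertion, $S_X$ \emph{is} cofiltered, since any two objects admit a common refinement mapping to both; but cofilteredness of the index does not simplify a covariant colimit over it.) Because $S_X$ is not filtered, there is no chain that is a \emph{final} functor into $S_X$, and the formula ``$\Hom$ in the colimit $=$ filtered colimit of $\Hom$ between lifts'' that you want to invoke is unavailable. The paper's own worked case makes the obstruction concrete: there the relevant colimit is a homotopy push-out of the span $D(X_-)\leftarrow D(X_B)\to D(X_+)$, and a span is precisely the smallest diagram shape that defeats any reduction to a sequence. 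The machinery a correct proof would require is the push-out-of-quotients argument of \cite[Lemma~2.17]{BonKapSch}, applied over a suitable \emph{final subdiagram} of $S_X$; your Bousfield-localization observation is the right input to feed into it, but the combinatorial identification of such a subdiagram is the real work, and it is missing here.

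There is also a sign error in the secondary step. You propose to prove $\Lambda_X = \bigcap_{X'\in S_X}\Lambda_{X'}$ by arguing that any covector of $\Lambda_{X'}\setminus\Lambda_X$ ``can be excluded by a sufficiently fine smooth refinement.'' But refinement \emph{enlarges} the FLTZ skeleton ($\Lambda_\Sigma\subset\Lambda_{\Sigma'}$ when $\Sigma'$ refines $\Sigma$), so passing to finer refinements can never remove a covector. What one actually needs is, for each such covector, some smooth refinement~$X''$ of~$X$ \emph{incomparable} with~$X'$ that omits it; in the conifold case this is exactly the identity $\Lambda_-\cap\Lambda_+=\Lambda_0$ which the paper verifies explicitly. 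The general claim would need its own toric-combinatorial argument, which is currently absent from your sketch.
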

These two conjectures would allow us to conclude an equivalence between $D(X)\simeq \wSh_{\Lambda_X}(T)$ from the smooth coherent-constructible correspondence.
\end{remark}

\newpage

\end{document}